\theoremstyle{plain}
\newtheorem{thm}{Thm}[section]
\newtheorem{claim}[thm]{Claim}
\newtheorem{theorem}[thm]{Theorem}
\newtheorem{lemma}[thm]{Lemma}
\newtheorem{corollary}[thm]{Corollary}
\newtheorem{proposition}[thm]{Proposition}
\newtheorem{conjecture}[thm]{Conjecture}
\newtheorem{observation}[thm]{Observation}
\numberwithin{equation}{section}
\newlength{\bibitemsep}\setlength{\bibitemsep}{0.75pt}
\newlength{\bibparskip}\setlength{\bibparskip}{0pt}
\let\oldthebibliography\thebibliography
\renewcommand\thebibliography[1]{
  \oldthebibliography{#1}
  \setlength{\parskip}{\bibitemsep}
  \setlength{\itemsep}{\bibparskip}
}
\newenvironment{proofofclaim}
  {\begin{proof}[Proof of the claim]}
  {\end{proof}}
\begin{document}
\title{Frustration indices of signed subcubic graphs}

\date{}

\author{Sirui Chen} 

\author{Jiaao Li}

\author{Zhouningxin Wang}

\affil{\small School of Mathematical Sciences and LPMC, Nankai University, Tianjin 300071, China.

Email: sirui.chen@mail.nankai.edu.cn, \{lijiaao, wangzhou\}@nankai.edu.cn.}

\maketitle

\begin{abstract}
The frustration index of a signed graph is defined as the minimum number of negative edges among all switching-equivalent signatures. This can be regarded as a generalization of the classical \textsc{Max-Cut} problem in graphs, as the \textsc{Max-Cut} problem is equivalent to determining the frustration index of signed graphs with all edges being negative signs. 
In this paper, we prove that the frustration index of an $n$-vertex signed connected simple  subcubic graph, other than $(K_4, -)$, is at most $\frac{3n + 2}{8}$, and we characterize the family of signed graphs for which this bound is attained. This bound can be further improved to $\frac{n}{3}$ for signed $2$-edge-connected simple subcubic graphs, with the exceptional signed graphs being characterized. As a corollary, every signed $2$-edge-connected simple cubic  graph on at least $10$ vertices and with $m$ edges has its frustration index at most $\frac{2}{9}m$, where the upper bound is tight as it is achieved by an infinite family of signed cubic graphs.
\end{abstract}

\noindent
{\bf Keywords}: {Frustration index, Signed graphs, Cubic graphs, Max-cut}

\section{Introduction}

The \textsc{Max-Cut} problem seeks a vertex partition of a graph that maximizes the number of edges between the two parts. For a graph $G$, let $b(G)$ denote the size of its maximum cut, and let $v(G)$ and $e(G)$ denote the number of vertices and edges, respectively. Although the \textsc{Max-Cut} problem is shown to be NP-hard \cite{K1972}, several positive algorithmic results exist under specific constraints. Hadlock \cite{H1975} proved that a maximum cut can be found in polynomial time for planar graphs. Barahona \cite{B1983} further showed that the problem is polynomially solvable for graphs not contractible to $K_5$. Goemans and Williamson \cite{GW1995} proposed a polynomial-time algorithm that finds a cut in any graph $G$ of size at least approximately $0.878 b(G)$, which is best possible. 

Edwards \cite{E1973,E1975} established a fundamental lower bound of $b(G)$ based on its number of vertices and edges: $b(G) \geq \frac{e(G)}{2} + \frac{v(G) - 1}{4}.$ In 1997, J. Koml\'{o}s~\cite{K1997} studied simple graphs $G$ of girth $g$, and provided a sharp upper bound of $\min\{v(G), c \frac{v(G)}{g} \log \frac{2v(G)}{g} \}$, where $c$ is a constant. In 1982, Locke \cite{L1982} showed that for any $k$-regular graph $G$, the number of edges of a maximal $(k-1)$-colorable subgraph is greater than $\frac{k^2-2}{k^2}e(G)$, which implies, in particular, that $b(G) \geq \frac{7}{9}e(G)$ for cubic graphs $G$. Further refinements were obtained in subcubic settings. Hopkins and Staton \cite{HS1982} proved that $b(G) \geq \frac{4}{5}e(G)$ for triangle-free cubic graphs. Bondy and Locke \cite{BL1986} extended the result of Hopkins and Staton to triangle-free subcubic graphs, and Xu and Yu \cite{XY2008} characterized the seven extremal graphs achieving the equality. Zhu \cite{Z2009a} gave a more concise proof of Bondy and Locke’s result,  and later \cite{Z2009b} improved the bound to a tight one: $b(G)\ge \frac{17}{21}e(G)$. For cubic graphs with large girths, Zýka \cite{Z1990} showed that $b(G)\ge \frac{6}{7}e(G)$ when the girth $g$ of $G$ tends to infinity.

An equivalent formulation of the \textsc{Max-Cut} problem can be given in terms of signed graphs and their \emph{frustration indices}. A \emph{signed graph} $(G, \sigma)$ consists of an underlying graph $G$ together with a \emph{signature} $\sigma: E(G) \to \{+, -\}$. The notation $\widehat{G}$ is also used to refer to a signed graph when the signature is clear from the context. The set of negative edges in $(G, \sigma)$ is denoted by $E^-_{(G, \sigma)}$. The \emph{switching} operation at an edge-cut $[X, X^c]$ is to change the signs of all edges in the cut. Two signatures $\sigma$ and $\sigma'$ on $G$ are \emph{switching equivalent} if one can be obtained from the other by switching at some edge-cut. The \emph{frustration index} of $(G, \sigma)$, denoted by $F(G, \sigma)$, is defined to be the minimum number of negative edges in any switching-equivalent signature:
$$F(G, \sigma) = \min \{|E^-_{(G, \sigma')}|: \sigma' \text{~is switching equivalent to~}\sigma\},$$
where the minimum is taken over all switching-equivalent signatures $\sigma'$ of $\sigma$. 
Let $(G, -)$ denote the signed graph with all edges being negative. Noting that in $(G, -)$ all odd cycles have an odd number of negative edges and all even cycles have an even number of negative edges, we may apply a switching at an edge-cut of $G$ with the maximum size, thereby turning all those edges into positive signs. The fundamental relationship between the frustration index and the max-cut follows: $$b(G) = e(G) - F(G, -) \geq e(G) - \max_{\sigma} F(G, \sigma),$$
where the maximum is taken over all arbitrary signatures $\sigma$ of $G$.

Since computing the frustration index can be reduced from the \textsc{Max-Cut} problem where all edges are negative, it is known to be NP-hard. In 1981, J. Akiyama, D. Avis, V. Chv\'{a}tal, and H. Era~\cite{AACE1981} established a non-trivial lower bound for the frustration index of a signed simple graph $(G, \sigma)$; this bound was later refined by P. Solé and T. Zaslavsky in 1994~\cite{SZ1994}. The trivial upper bound $\frac{e(G)}{2}$ was improved by S. Poljak and D. Turz\'{i}k \cite{PT1982} in 1982. Those two results together provide the following:
$$\frac{1}{2}e(G)-\sqrt{\frac{1}{2}\ln{2}}\sqrt{e(G)(v(G)-1)} \leq F(G, \sigma) \leq \frac{1}{2}e(G) - \frac{1}{2} \lceil \frac{1}{2}(v(G)-1) \rceil.$$
 In 2004, D. Král and H. J. Voss \cite{KV2004} proved that for a signed planar graph $(G, \sigma)$, it holds that $F(G, \sigma) \leq 2\nu(G, \sigma)$, where $\nu(G, \sigma)$ denotes the maximum number of edge-disjoint negative cycles in $(G, \sigma)$. For further questions and recent developments in the theory of signed graphs, we refer the reader to the survey \cite{Z1998}.

In this paper, we study the frustration indices of signed subcubic graphs and improve upon the existing results. In all figures, negative edges are depicted as dashed red lines, positive edges as solid blue lines, and edges with unspecified signs are shown as solid gray lines. Our first main result is as follows.

\begin{theorem}\label{thm:main}
Every signed $2$-edge-connected simple subcubic graph $(G, \sigma)$, except for the five specific signed graphs $\widehat{\Gamma}_1, \ldots, \widehat{\Gamma}_5$ shown in \Cref{fig:Exception}, satisfies that 
$$F(G, \sigma) \leq \frac{1}{3} 
v(G).$$ Moreover, there exists an infinite family of signed $2$-edge-connected simple subcubic graphs with the frustration index equal to one-third of the number of vertices.
\end{theorem}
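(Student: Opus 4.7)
I plan to prove the theorem by induction on $v(G)$, combined with structural reductions, and to exhibit an infinite extremal family at the end. At the outset I choose $\sigma$ within its switching class so as to minimize $|E^-_{(G,\sigma)}|$, so that $F(G,\sigma)=|E^-_{(G,\sigma)}|$. This switching-minimality forces every edge-cut $[X,\overline X]$ to carry at most half negative edges; in particular every vertex is incident to at most $\lfloor\deg(v)/2\rfloor$ negative edges, and every triangle contains at most one negative edge. These local restrictions play the role of the optimality conditions in Locke-type arguments for \textsc{Max-Cut}.

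The first structural reduction targets a vertex $v$ of degree $2$ with non-adjacent neighbors $u,w$: I would suppress $v$ and add the edge $uw$ with sign $\sigma(uv)\sigma(vw)$. The resulting signed graph $(G',\sigma')$ is simple, subcubic and $2$-edge-connected on $v(G)-1$ vertices, and since cycle signs are preserved a short check yields $F(G,\sigma)=F(G',\sigma')$. The inductive hypothesis then gives $F(G,\sigma)\le (v(G)-1)/3<v(G)/3$, unless $(G',\sigma')$ is one of the five exceptions $\widehat\Gamma_i$, in which case $(G,\sigma)$ belongs to a short, explicit list of signed graphs obtained by subdividing an edge of some $\widehat\Gamma_i$ and can be checked by hand. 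If instead the two neighbors of $v$ are adjacent, then $uvw$ is a triangle containing the degree-$2$ vertex $v$, and by switching-minimality this triangle carries at most one negative edge; I would delete $\{u,v,w\}$ and (if needed) add an edge $u'w'$ between the remaining neighbors of $u$ and $w$ with an appropriate sign to maintain $2$-edge-connectivity, then establish $F(G,\sigma)\le F(G'',\sigma'')+1$, so that combining with the inductive bound $F(G'',\sigma'')\le (v(G)-3)/3$ yields $F(G,\sigma)\le v(G)/3$.

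Once these reductions cannot be applied, $G$ is cubic. Here I would run a Locke-style analysis adapted to signed graphs: pick a short cycle $C$ (a triangle if $g(G)=3$, otherwise a $4$- or $5$-cycle), analyze the signature along $C$ and on the edges leaving $C$, and splice a locally optimal $\{+,-\}$-coloring of $V(C)$ with the inductive bound on the graph obtained after removing and reconnecting $C$, producing a global $2$-coloring that disagrees with $\sigma$ on at most $v(G)/3$ edges. The main obstacle, and the bulk of the work, lies in this step: tracking $2$-edge-connectivity through repeated local operations, confirming that the inductive hypothesis still applies to the reduced graph (i.e.\ that it is not itself on the exception list), and showing that when equality in the bound is forced the only configurations that resist every reduction are the five listed signed graphs $\widehat\Gamma_1,\ldots,\widehat\Gamma_5$.

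For the infinite extremal family, I take as building block the signed triangular prism $K_3\square K_2$ with two independent negative edges: it has $6$ vertices and frustration index $2$, certified by its two edge-disjoint unbalanced triangles. Longer examples are obtained by chaining $k$ such prisms: delete one matching edge from each prism (creating two degree-$2$ vertices per block) and join consecutive blocks by two new edges between these degree-$2$ vertices. This produces an infinite family of simple subcubic $2$-edge-connected signed graphs on $6k$ vertices whose frustration index equals $2k=v(G)/3$, the lower bound being witnessed by the $2k$ edge-disjoint unbalanced triangles of the chain.
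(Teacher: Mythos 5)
Your setup (switching-minimal signature, no unequilibrated cuts, hence at most one negative edge per vertex and per triangle) and your degree-$2$ reductions match the paper's Observation 2.1 and Lemma 4.4, and are sound. The genuine gap is the cubic case, which is where the entire difficulty of the theorem lives and where your proposal offers only a plan rather than an argument. The ``Locke-style'' step --- remove a short cycle $C$, reconnect, and splice a locally optimal coloring of $V(C)$ with the inductive bound --- does not work as described. First, a cubic graph need not contain a cycle of length at most $5$ (cages of girth $6$ and more exist), so the reduction cannot even be initiated in general. Second, the bookkeeping fails already for $4$-cycles: a $4$-cycle in a switching-minimal signature can carry two negative edges (a pair of opposite edges), so deleting its four vertices can cost $2>4/3$ against the budget $\ell/3$, and the induction does not close. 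Third, reconnecting the $\deg$-deficient boundary vertices after deleting $V(C)$ while preserving simplicity and $2$-edge-connectivity, and verifying that the reduced graph is not one of $\widehat\Gamma_1,\ldots,\widehat\Gamma_5$, is precisely where all the case analysis is needed; asserting that ``the only configurations that resist every reduction are the five listed signed graphs'' is the conclusion, not a proof. The paper does \emph{not} prove the cubic case by cycle-removal induction: it fixes a minimum counterexample, partitions $V(G)$ into $X$ (vertices meeting no negative edge) and $Y$, proves a list of reducible configurations (no negative triangle, no adjacent triangles, a $2$-edge-cut lemma, two forbidden gadgets $\widehat H_1,\widehat H_2$), uses these to establish the key inequality $|X_3|+|Y_0|\le|Y_2|$, and then finishes by double counting the $X$--$Y$ edges to get $|E^-|\le\frac12|Y|\le\frac13 v(G)$. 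Your proposal contains no substitute for this global counting step.

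Two smaller points. The extremal family is a reasonable alternative to the paper's (which chains negative triangles and $2$-subdivisions of $\widehat\Gamma_1$ by single positive edges in a necklace), and the prism $K_3\square K_2$ with one negative edge in each triangle does have frustration index $2=\frac13 v$; but your chaining rule does not parse for $k\ge 3$ blocks: each block has only two degree-$2$ vertices, so it cannot be joined to \emph{each} of two neighboring blocks by \emph{two} new edges. Arranging the blocks cyclically with a single new edge between consecutive blocks fixes this and preserves $2$-edge-connectivity, with $F=2k=\frac13 v$ certified by the $2k$ vertex-disjoint negative triangles. Finally, in the degree-$2$ reduction you should note (as the paper does) that when the suppressed graph $(G',\sigma')$ is an exception one cannot simply ``check by hand'' in isolation: one must verify $F(G,\sigma)=F(G',\sigma')$ via the absence of unequilibrated cuts in the new graph, since the added edge may lie in cuts that have no counterpart in $G$.
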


\begin{figure}[ht]
    \centering
    \begin{subfigure}[t]{.4\textwidth}
    \centering
    \begin{tikzpicture}
    [scale=.25]
    \draw(0,0) node[circle, draw=black!90, inner sep=0mm, thick, minimum size=2mm] (x_{0}){};
    \foreach \i/\j in {1,2,3}
    {
    \draw[rotate=120*(\i)+90] (0,4) node[circle, draw=black!90, inner sep=0mm, thick, minimum size=2mm] (x_{\i}){};
    }
    
    \foreach \i/\j in {2/3,3/1}
    {
    \draw[line width=1pt, blue] (x_{\i}) -- (x_{\j});
    }
    \foreach \i in {1,2}
    {
    \draw[line width=1pt, blue] (x_{\i}) -- (x_{0});
    }
    \draw[dashed, line width=1pt, red] (x_{1}) -- (x_{2});
    \draw[dashed, line width=1pt, red] (x_{3}) -- (x_{0});
    \end{tikzpicture}
    \caption{$F(\widehat{\Gamma}_1)=\frac{1}{2}v(\widehat{\Gamma}_1)$}
    \label{fig:K4}
    \end{subfigure}
    \begin{subfigure}[t]{.4\textwidth}
    \centering
    \begin{tikzpicture}
    [scale=.25]
    \draw(0,0) node[circle, draw=black!90, inner sep=0mm, thick, minimum size=2mm] (x_{0}){};
    \draw[rotate=90] (0,-2) node[circle, draw=black!90, inner sep=0mm, thick,  minimum size=2mm] (x_{4}){};
    \foreach \i/\j in {1,2,3}
    {
    \draw[rotate=120*(\i)+90] (0,4) node[circle, draw=black!90, inner sep=0mm, thick, minimum size=2mm] (x_{\i}){};
    }
    
    \foreach \i/\j in {2/3,3/1}
    {
    \draw[line width=1pt, blue] (x_{\i}) -- (x_{\j});
    }
    \foreach \i in {1,2}
    {
    \draw[line width=1pt, blue] (x_{\i}) -- (x_{0});
    }
    \draw[dashed, line width=1pt, red] (x_{1}) -- (x_{4});
    \draw[dashed, line width=1pt, red] (x_{3}) -- (x_{0});
    \draw[line width=1pt, blue] (x_{2}) -- (x_{4});
    \end{tikzpicture}
    \caption{$F(\widehat{\Gamma}_2)=\frac{2}{5}v(\widehat{\Gamma}_2)$}
    \label{fig:W}
    \end{subfigure}
  
    \begin{subfigure}[t]{.3\textwidth}
    \centering
    \begin{tikzpicture}
    [>=latex,
    roundnode/.style={circle, draw=black!90, inner sep=0mm, thick, minimum size=2mm},
    scale=0.25] 
    \draw(-1,0) node[circle, draw=black!90, inner sep=0mm, thick, minimum size=2mm] (x_{0}){};
    \draw(1,0) node[circle, draw=black!90, inner sep=0mm, thick, minimum size=2mm] (y_{0}){};
    
    \foreach \i/\j in {1,2,3,4,5,6}
    {
    \draw[rotate=60*(\i)+30] (0,4) node[circle, draw=black!90, inner sep=0mm, thick, minimum size=2mm] (x_{\i}){};
    }
    
    \foreach \i/\j in {1/2,2/3,3/4,4/5,1/6}
    {
    \draw[line width=1pt, blue] (x_{\i}) -- (x_{\j});
    }
    \foreach \i in {2,6}
    {
    \draw[line width=1pt, blue] (x_{\i}) -- (x_{0});
    }
    
    \foreach \i in {3,5}
    {
    \draw[line width=1pt, blue] (x_{\i}) -- (y_{0});
    }
    
    \draw[dashed, line width=1pt, red] (x_{6}) -- (x_{5});
    \draw[dashed, line width=1pt, red] (x_{1}) -- (x_{0});
    \draw[dashed, line width=1pt, red] (x_{4}) -- (y_{0});
    \end{tikzpicture}
    \caption{$F(\widehat{\Gamma}_3)=\frac{3}{8}v(\widehat{\Gamma}_3)$}
    \label{fig:Gamma1}
    \end{subfigure}
    \begin{subfigure}[t]{.3\textwidth}
    \centering
    \begin{tikzpicture}
    [>=latex, roundnode/.style={circle, draw=black!90, inner sep=0mm, thick, minimum size=2mm}, scale=0.25] 
    \draw(-2,0) node[circle, draw=black!90, inner sep=0mm, thick, minimum size=2mm] (x_{0}){};
    \draw(2,0) node[circle, draw=black!90, inner sep=0mm, thick, minimum size=2mm] (y_{0}){};
    \draw(0,1.5) node[circle, draw=black!90, inner sep=0mm, thick, minimum size=2mm] (z_{0}){};

    \foreach \i/\j in {1,2,3,4,5}
    {\draw[rotate=360/5*(\i)] (0,4) node[circle, draw=black!90, inner sep=0mm, thick, minimum size=2mm] (x_{\i}){};}
    
    \foreach \i/\j in {1/2,2/3,4/5,5/1}
    {\draw[line width=1pt, blue] (x_{\i}) -- (x_{\j});}
    \foreach \i in {3,4}
    {\draw[line width=1pt, blue] (x_{\i}) -- (y_{0});}
    \draw[line width=1pt, blue] (x_{1}) -- (x_{0}) -- (z_{0}) -- (y_{0});
    \draw[dashed, line width=1pt, red] (x_{0}) -- (x_{2});
    \draw[dashed, line width=1pt, red] (x_{3}) -- (x_{4});
    \draw[dashed, line width=1pt, red] (x_{5}) -- (z_{0});
    \end{tikzpicture}
    \caption{$F(\widehat{\Gamma}_4)=\frac{3}{8}v(\widehat{\Gamma}_4)$}
    \label{fig:Gamma2}
    \end{subfigure}
    \begin{subfigure}[t]{.3\textwidth}
    \centering
    \begin{tikzpicture}
    [>=latex,
    roundnode/.style={circle, draw=black!90, inner sep=0mm, thick, minimum size=2mm},
    scale=0.6] 
    \foreach \i in {1,2,3,4}
    {	\draw[rotate=-90*(\i-2)+45] (0, 2) node[roundnode] (u_\i){};
    }
    \foreach \i in {1,2,3,4}
    {	\draw[rotate=-90*(\i-2)+45] (0, 0.8) node[roundnode] (v_\i){};
    }
    \foreach \i/\j in {1/2,2/3,3/4}
    {	\draw [line width=1pt, blue] (u_\i) -- (u_\j);
    }
    \foreach \i/\j in {1/2,2/3,4/1}
    {
    \draw [line width=1pt, blue] (v_\i) -- (v_\j);
    }
    \foreach \i in {1,3,4}
    {
    \draw [line width=1pt, blue] (u_\i) -- (v_\i);
    }
    \foreach \i/\j in {4/1}
    {
    \draw [dashed, line width=1pt, red] (u_\i) -- (u_\j);
    }
    \foreach \i/\j in {3/4}
    {
    \draw [dashed, line width=1pt, red] (v_\i) -- (v_\j);
    }
    \foreach \i in {2}
    {
    \draw [dashed, line width=1pt, red] (v_\i) -- (u_\i);
    }
    \end{tikzpicture}
    \caption{$F(\widehat{\Gamma}_5)=\frac{3}{8}v(\widehat{\Gamma}_5)$}
    \label{fig:Cube}
    \end{subfigure}
    \caption{Five exceptional signed subcubic graphs with $F(G, \sigma)>\frac{1}{3}v(G)$}
    \label{fig:Exception}
\end{figure}
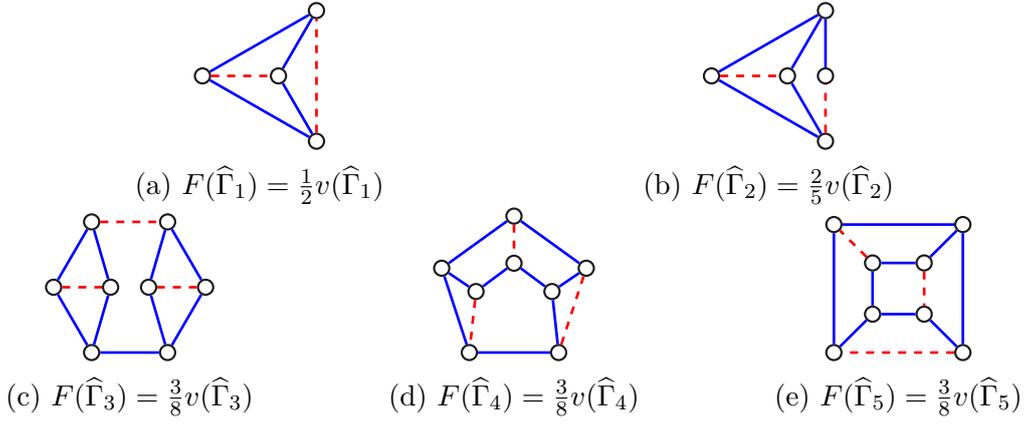

We now present an infinite family of signed $2$-edge-connected simple subcubic graphs $(G_n, \sigma_n)$ with $F(G_n, \sigma_n)=\frac{1}{3} v(G_n)$. Each of $(G_n, \sigma_n)$ is formed by connecting the components $\widehat{F}_1, \widehat{F}_2, \ldots, \widehat F_n$ by adding positive edges $x_iy_{i-1}$ with indices taken modulo n. Here $\widehat F_i$ is one of the following two kinds of signed gadgets: 
(1) a negative triangle with exactly one vertex of degree $2$; 
(2) any $2$-subdivision of $\widehat \Gamma_1$, where we subdivide a negative edge into a negative edge and a positive edge, and subdivide a positive edge into two positive edges. 
See~\Cref{fig:c3-2} and~\Cref{fig:k4sub}. Note that $|E^-_{\widehat F_i}|=\frac{1}{3}v(F_i)$ and thus $F(G_n, \sigma_n)=\frac{1}{3}v(G_n)$.

\begin{figure}[htbp]
\begin{subfigure}[t]{.45\textwidth}
    \centering  
	    \begin{tikzpicture}[>=latex,
		roundnode/.style={circle, draw=black!90, thick, minimum size=2mm, inner sep=0pt},
        squarenode/.style={rectangle, draw=black!90, thick, minimum size=2mm, inner sep=0pt},
        scale=0.8
		]
          \node[roundnode] (a1) at (1,1) {};
          \node[roundnode] (x1) at (0,0) {};
          \node[roundnode] (y1) at (2,0) {};

          \draw[fill=white,line width=0.2pt] (1,1) node[above=1mm] {$a_i$};
          \draw[fill=white,line width=0.2pt] (0,0) node[below=1mm] {$x_i$};
          \draw[fill=white, line width=0.2pt] (2,0) node[below=1mm] {$y_i$};

          \draw[line width=1pt, blue] (a1)--(x1)--(y1);
          \draw[line width=1pt, blue] (x1)--(-1,0);
          \draw[line width=1pt, blue] (y1)--(3,0);
          \draw[dashed, line width=1pt, red] (y1)--(a1);

	    \end{tikzpicture}
	    \caption{The first kind of signed gadget}
    \label{fig:c3-2}  
     \end{subfigure}
     \begin{subfigure}[t]{.45\textwidth}
	    \centering
	     \begin{tikzpicture}[>=latex,
		roundnode/.style={circle, draw=black!90, thick, minimum size=2mm, inner sep=0pt},
        squarenode/.style={rectangle, draw=black!90, thick, minimum size=2mm, inner sep=0pt},
        scale=0.8
		]
          
          \node[roundnode] (a1) at (0.8,0.8) {};
          \node[roundnode] (b1) at (2.4,0.8) {};
          \node[roundnode] (c1) at (0.8,-0.8) {};
          \node[roundnode] (d1) at (2.4,-0.8) {};
          \node[roundnode] (x1) at (0.8,0) {};
          \node[roundnode] (y1) at (2.4,0) {};

          \draw[fill=white,line width=0.2pt] (0.8,0.8) node[above=1mm] {$a_i$};
          \draw[fill=white,line width=0.2pt] (2.4,0.8) node[above=1mm] {$b_i$};          
          \draw[fill=white,line width=0.2pt] (0.8,-0.8) node[below=1mm] {$c_i$};
          \draw[fill=white,line width=0.2pt] (2.4,-0.8) node[below=1mm] {$d_i$};
          \draw[fill=white,line width=0.2pt] (0.9,-0.3) node[left=1mm] {$x_i$};
          \draw[fill=white, line width=0.2pt] (2.3,-0.3) node[right=1mm] {$y_i$};

          \draw[line width=1pt, blue] (a1)--(b1)--(y1)--(d1)--(c1)--(x1)--(a1);
          \draw[dashed, line width=1pt, red] (a1)--(d1);
          \draw[dashed, line width=1pt, red] (c1)--(b1);
          \draw[line width=1pt, blue] (x1)--(0,0);
          \draw[line width=1pt, blue] (y1)--(3.2,0);
	    \end{tikzpicture} 
    \caption{The second kind of signed gadget: a $2$-subdivision of $\widehat \Gamma_1$}  
    \label{fig:k4sub}
    \end{subfigure}
      \caption{A subgraph $\widehat{G_i}$}
    \label{tight} 
\end{figure}
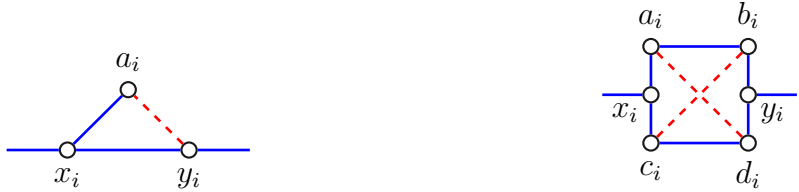

\Cref{thm:main} yields the following corollary for signed cubic graphs, which particularly generalizes the result of Locke~\cite{L1982} and shows that every $2$-edge-connected simple cubic graph $G$ on at least $10$ vertices contains an edge-cut of size at least $\frac{7}{9}e(G)$.

\begin{corollary}
    Every signed $2$-edge-connected simple cubic graph $(G, \sigma)$ on at least $10$ vertices satisfies that $F(G, \sigma)\leq \frac{2}{9}e(G)$. Moreover, the upper bound is tight.
\end{corollary}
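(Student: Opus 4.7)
The plan is to derive the corollary almost immediately from \Cref{thm:main}, using the fact that a cubic graph on $n$ vertices has exactly $\frac{3n}{2}$ edges, and then to produce an explicit infinite family attaining the bound by specializing the construction already described before the corollary.

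First, I would observe that each of the five exceptional signed graphs $\widehat{\Gamma}_1,\ldots,\widehat{\Gamma}_5$ displayed in \Cref{fig:Exception} has at most eight vertices: $\widehat{\Gamma}_1$ has $4$, $\widehat{\Gamma}_2$ has $5$, and $\widehat{\Gamma}_3, \widehat{\Gamma}_4, \widehat{\Gamma}_5$ each have $8$. Consequently, for any $2$-edge-connected simple cubic graph $(G,\sigma)$ with $v(G) \geq 10$, the signed graph $(G,\sigma)$ cannot be one of these exceptions. Applying \Cref{thm:main} directly, we get
\begin{equation*}
F(G,\sigma) \;\leq\; \tfrac{1}{3}\,v(G).
\end{equation*}
Since $G$ is cubic, $e(G) = \frac{3}{2}v(G)$, so $v(G) = \frac{2}{3}e(G)$, and substituting yields $F(G,\sigma) \leq \frac{2}{9}e(G)$, as desired.

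For the tightness statement, I would revisit the infinite family $(G_n,\sigma_n)$ constructed immediately before the corollary and select only the second kind of gadget, namely the $2$-subdivision of $\widehat{\Gamma}_1$ depicted in \Cref{fig:k4sub}. Each such gadget $\widehat F_i$ contributes six vertices, of which the two subdivision vertices $x_i,y_i$ have internal degree $2$ and get their third edge through the connecting positive edges $x_iy_{i-1}$; all other vertices already have degree $3$ inside $\widehat F_i$. Hence, for every $n \geq 2$, the resulting signed graph $(G_n,\sigma_n)$ is a $2$-edge-connected simple cubic graph with $v(G_n)=6n \geq 12$ and $e(G_n)=9n$, and the preceding remarks give $F(G_n,\sigma_n)=\frac{1}{3}v(G_n) = 2n = \frac{2}{9}e(G_n)$. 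This provides the claimed infinite tight family.

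There is essentially no obstacle here beyond a careful bookkeeping of vertex counts in the exceptional graphs and a verification that the type-2 gadget yields a cubic graph after the circular linking, so the proof reduces to these two short observations.
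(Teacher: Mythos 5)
Your proposal is correct and matches the paper's intended derivation: the corollary is stated as a direct consequence of \Cref{thm:main}, using that none of the five exceptions has $10$ or more vertices and that $e(G)=\tfrac{3}{2}v(G)$ for cubic $G$, and the tight family is exactly the circular linking of type-2 gadgets (the $2$-subdivisions of $\widehat{\Gamma}_1$) from the construction preceding the corollary, which you correctly observe is the cubic specialization of that family. No issues.
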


For signed subcubic graphs which are not $2$-edge-connected and may contain some cut-edges, we have the following result with the assistance of~\Cref{thm:main}. Each maximal $2$-edge-connected component of a connected graph obtained by removing all cut-edges is called a \emph{block}. A block of a connected graph is called a \emph{leaf-block} if it is connected to the rest of the graph via a single cut-edge. 

\begin{theorem}\label{thm:1ec}
    Every signed connected simple subcubic  graph $(G,\sigma)$ other than $\widehat{\Gamma}_1$ satisfies that $$F(G,\sigma)\le \frac{3v(G)+2}{8}.$$ Moreover, the equality holds if and only if $(G, \sigma)$ is cubic and contains at least one cut-edge, each leaf-block is isomorphic to $\widehat{\Gamma}_2$, and each non-leaf-block is isomorphic to a negative triangle.
\end{theorem}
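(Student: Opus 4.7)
The plan is induction on $v(G)$ combined with the block decomposition of $G$, using \Cref{thm:main} as a black box for each 2-edge-connected block. If $G$ is itself 2-edge-connected, \Cref{thm:main} yields $F(G,\sigma) \le v(G)/3 \le (3v(G)+2)/8$ (since $8v \le 9v+6$) unless $G \in \{\widehat{\Gamma}_1,\ldots,\widehat{\Gamma}_5\}$; the four non-excluded exceptions are checked by hand ($17/8 \ge 2$ for $\widehat{\Gamma}_2$ and $26/8 \ge 3$ for $\widehat{\Gamma}_3,\widehat{\Gamma}_4,\widehat{\Gamma}_5$).

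Now suppose $G$ has at least one cut-edge. Decompose $G$ into blocks $B_1,\ldots,B_k$ joined by the $k-1$ cut-edges of the block tree. Since the sign of each cut-edge can be independently fixed to $+$ by a switching, $F(G,\sigma) = \sum_{i=1}^k F(B_i,\sigma|_{B_i})$. Let $d_i$ denote the number of cut-edges incident to $B_i$; then $\sum_i d_i = 2(k-1)$, and each cut-edge endpoint in $B_i$ must be a degree-$2$ vertex of $B_i$ (as $G$ is subcubic), so $d_i \le \ell(B_i)$ where $\ell(B_i)$ counts the degree-$2$ vertices of $B_i$.

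The heart is the following key lemma: for every 2-edge-connected simple subcubic graph $B$ other than $\widehat{\Gamma}_1$, and any signing $\sigma_B$,
\[
F(B,\sigma_B) \le \frac{3v(B) + 2 - \ell(B)}{8}.
\]
Granting this, we obtain
\[
F(G,\sigma) \le \sum_i \frac{3v(B_i)+2-d_i}{8} = \frac{3v(G)+2k-2(k-1)}{8} = \frac{3v(G)+2}{8}.
\]
The lemma splits into cases: the exceptional blocks $\widehat{\Gamma}_{2,3,4,5}$ are verified directly from their stated $F$, $v$, and $\ell$ values; non-exceptional $B$ with $\ell(B) \le (v(B)+6)/3$ follow from \Cref{thm:main}'s bound $F(B) \le v(B)/3$; for cycle-like $B$ with $\ell(B) > (v(B)+6)/3$, I would smooth out a degree-$2$ vertex $v$ with distinct neighbors $u,w$---if $uw \notin E(B)$, suppressing $v$ (product-signing the new edge $uw$) yields $B'$ with $F(B')=F(B)$, $v(B')=v(B)-1$, and $\ell(B')=\ell(B)-1$, so induction on $v(B)$ closes it. The degenerate subcase $uw \in E(B)$ (a pendant triangle $uvw$) is handled separately by deleting $v$ and rerouting through the existing edge $uw$, adjusting its sign by the triangle's balance, then invoking induction on the smaller graph.

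For the equality characterization, tightness throughout forces $F(B_i) = (3v(B_i)+2-d_i)/8$ and $d_i = \ell(B_i)$ for each $i$. From the lemma, for non-exceptional $B$ the inequality $v(B)/3 \ge (3v(B)+2-\ell(B))/8$ is strict unless $v(B)=3$ and $\ell(B)=3$, which pins $B_i$ to a triangle (negative, so that $F=1$ matches the bound); among the exceptional blocks, only $\widehat{\Gamma}_2$ satisfies $F = (3v+2-\ell)/8$ exactly. Hence every $B_i$ is either a negative triangle with $d_i=3$ (an internal block attached via all three of its degree-$2$ vertices) or a copy of $\widehat{\Gamma}_2$ with $d_i=1$ (a leaf-block attached via its unique degree-$2$ vertex). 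In particular every vertex of $G$ has degree $3$, so $G$ is cubic, matching the stated extremal structure. The main obstacle will be the key lemma in the cycle-like regime, where \Cref{thm:main}'s $v/3$ bound is insufficient and the smoothing induction must carefully dispose of the degenerate pendant-triangle configurations without losing simplicity or 2-edge-connectivity.
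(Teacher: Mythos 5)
Your route is genuinely different from the paper's. The paper attacks \Cref{thm:1ec} by a minimal-counterexample argument with a long sequence of local reductions (leaf-blocks are $\widehat{\Gamma}_2$, the attaching vertex lies in a triangle, the triangles are disjoint, then \Cref{prop:leaf-block} applied to $G$ minus its leaf-blocks). You instead decompose along the whole block tree at once and push everything into a single refined per-block inequality $F(B)\le \tfrac{1}{8}\bigl(3v(B)+2-\ell(B)\bigr)$, where $\ell(B)$ counts degree-$2$ vertices. The bookkeeping $\sum_i d_i=2(k-1)$, $d_i\le\ell(B_i)$ and $F(G)=\sum_i F(B_i)$ is correct (with the caveat that $d_i\le\ell(B_i)$ fails for single-vertex blocks, where a vertex can carry up to three cut-edges; there $F=0\le(5-d_i)/8$ holds anyway, and the equality analysis excludes them, but you should treat them explicitly). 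The lemma checks out on all the relevant extremal objects (negative triangle and $\widehat{\Gamma}_2$ give equality, $\widehat{\Gamma}_{3,4,5}$ give $3\le 26/8$), and if it holds, both the bound and the equality characterization follow cleanly; your scheme is arguably more transparent than the paper's.

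The genuine gap is exactly where you flagged it: the pendant-triangle subcase of the smoothing induction, and your proposed repair does not close. If the degree-$2$ vertex $v$ has adjacent neighbors $u,w$, deleting $v$ and adjusting $\sigma(uw)$ by the triangle's balance only yields $F(B)\le F(B-v)+1$ in the worst case (one of $uv,vw$ must be negative whenever the triangle's balance disagrees with the optimal sign of $uw$ in $B-v$). Meanwhile $v(B-v)=v(B)-1$ and $\ell(B-v)=\ell(B)+1$ (both $u$ and $w$ drop to degree $2$), so the inductive bound for $B-v$ improves by only $\tfrac{4}{8}$ while the reduction costs a full $1$: you get $F(B)\le\tfrac{1}{8}\bigl(3v(B)+6-\ell(B)\bigr)$, which overshoots the target by $\tfrac12$. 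As written, the induction fails on this case.

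The fix is to show the case never arises where you need it. You only invoke smoothing when $\ell(B)>\tfrac{v(B)+6}{3}$. If a degree-$2$ vertex has adjacent neighbors and $B\ne K_3$, both neighbors must have degree $3$ (otherwise the triangle is attached to the rest of $B$ by a single edge, contradicting $2$-edge-connectivity), and for $v(B)\ge 5$ these triangles are pairwise vertex-disjoint (two of them sharing a vertex forces a fourth neighbor on a degree-$3$ vertex, except in the $K_4-e$ case, which has $v=4$). Hence at most $v(B)/3$ degree-$2$ vertices sit in triangles, and since $\ell(B)>\tfrac{v(B)+6}{3}>\tfrac{v(B)}{3}$, some degree-$2$ vertex has non-adjacent neighbors and can be suppressed without creating a multi-edge. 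With that observation (plus checking $v(B)\le 4$ by hand and noting the suppressed graph is never $\widehat{\Gamma}_1$ in this regime), your key lemma and hence the whole argument goes through; you will also want to tighten the equality analysis, since $v/3=(3v+2-\ell)/8$ holds for every $v$ with $\ell=(v+6)/3$, and it is the additional constraint $F=v/3$ together with $F\le\tfrac12(v-\ell)$ for the suppressed cubic graph that pins the block to a negative triangle.
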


Two examples are illustrated in~\Cref{fig:3/8-leaf}, showing that the upper bound of~\Cref{thm:1ec} is tight.

\begin{figure}[htbp]
\begin{subfigure}[t]{.45\textwidth}
    \centering  
	    \begin{tikzpicture}[>=latex,
		roundnode/.style={circle, draw=black!90, thick, minimum size=2mm, inner sep=0pt},
        squarenode/.style={rectangle, draw=black!90, thick, minimum size=2mm, inner sep=0pt},
        scale=0.8
		]
        
          \node[roundnode] (x1) at (-1,1) {};
          \node[roundnode] (a1) at (-1,2) {};
          \node[roundnode] (b1) at (-3,2) {};
          \node[roundnode] (c1) at (-3,0) {};
          \node[roundnode] (d1) at (-1,0) {};

          \node[roundnode] (x2) at (1,1) {};
          \node[roundnode] (a2) at (1,2) {};
          \node[roundnode] (b2) at (3,2) {};
          \node[roundnode] (c2) at (3,0) {};
          \node[roundnode] (d2) at (1,0) {};

        \draw[line width=1pt, blue] (x1)--(a1)--(b1)--(c1)--(d1)--(x1);
        \draw[dashed, line width=1pt, red] (a1)--(c1);
        \draw[dashed, line width=1pt, red] (b1)--(d1);
        
        \draw[line width=1pt, blue] (x1)--(x2);
        
        \draw[line width=1pt, blue] (x2)--(a2)--(b2)--(c2)--(d2)--(x2);
        \draw[dashed, line width=1pt, red] (a2)--(c2);
        \draw[dashed, line width=1pt, red] (b2)--(d2);

	    \end{tikzpicture}
	    \caption{$F(G, \sigma)=\frac{3v(G)+2}{8}$}
        \label{fig:leaf-leaf}
     \end{subfigure}
     \begin{subfigure}[t]{.45\textwidth}
	    \centering
	     \begin{tikzpicture}[>=latex,
		roundnode/.style={circle, draw=black!90, thick, minimum size=2mm, inner sep=0pt},
        squarenode/.style={rectangle, draw=black!90, thick, minimum size=2mm, inner sep=0pt},
        scale=0.8
		]
          
          \node[roundnode] (a) at (1,0.4) {};
          \node[roundnode] (b) at (2,0.4) {};
          \node[roundnode] (c) at (3,1) {};
          \node[roundnode] (d) at (3,-0.2) {};
          \node[roundnode] (e) at (4,1.65) {};
          \node[roundnode] (f) at (4,-0.85) {};

          \node[roundnode] (x1) at (-1,1) {};
          \node[roundnode] (y1) at (0,1) {};
          \node[roundnode] (a1) at (-1,1.4) {};
          \node[roundnode] (b1) at (-1.8,1.4) {};
          \node[roundnode] (c1) at (-1.8,0.6) {};
          \node[roundnode] (d1) at (-1,0.6) {};

          \node[roundnode] (x2) at (-1,-0.2) {};
          \node[roundnode] (y2) at (0,-0.2) {};
          \node[roundnode] (a2) at (-1,0.2) {};
          \node[roundnode] (b2) at (-1.8,0.2) {};
          \node[roundnode] (c2) at (-1.8,-0.6){};
          \node[roundnode] (d2) at (-1,-0.6) {};

          \node[roundnode] (x3) at (6,2.3) {};
          \node[roundnode] (y3) at (5,2.3) {};
          \node[roundnode] (a3) at (6,2.7) {};
          \node[roundnode] (b3) at (6.8,2.7) {};
          \node[roundnode] (c3) at (6.8,1.9) {};
          \node[roundnode] (d3) at (6,1.9) {};

          \node[roundnode] (x4) at (6,  1) {};
          \node[roundnode] (y4) at (5,  1) {};
          \node[roundnode] (a4) at (6,  1.4) {};
          \node[roundnode] (b4) at (6.8,1.4) {};
          \node[roundnode] (c4) at (6.8,0.6) {};
          \node[roundnode] (d4) at (6,  0.6) {};

          \node[roundnode] (x5) at (6,  -0.2) {};
          \node[roundnode] (y5) at (5,  -0.2) {};
          \node[roundnode] (a5) at (6,  0.2) {};
          \node[roundnode] (b5) at (6.8,0.2) {};
          \node[roundnode] (c5) at (6.8,-0.6) {};
          \node[roundnode] (d5) at (6,  -0.6) {};

          \node[roundnode] (x6) at (6,  -1.5) {};
          \node[roundnode] (y6) at (5,  -1.5) {};
          \node[roundnode] (a6) at (6,  -1.1) {};
          \node[roundnode] (b6) at (6.8,-1.1) {};
          \node[roundnode] (c6) at (6.8,-1.9) {};
          \node[roundnode] (d6) at (6,  -1.9) {};

        \draw[line width=1pt, blue] (y1)--(x1)--(a1)--(b1)--(c1)--(d1)--(x1);
        \draw[dashed, line width=1pt, red] (a1)--(c1);
        \draw[dashed, line width=1pt, red] (b1)--(d1);
        
        \draw[line width=1pt, blue] (y2)--(x2)--(a2)--(b2)--(c2)--(d2)--(x2);
        \draw[dashed, line width=1pt, red] (a2)--(c2);
        \draw[dashed, line width=1pt, red] (b2)--(d2);
        
        \draw[line width=1pt, blue] (y3)--(x3)--(a3)--(b3)--(c3)--(d3)--(x3);
        \draw[dashed, line width=1pt, red] (a3)--(c3);
        \draw[dashed, line width=1pt, red] (b3)--(d3);
        
        \draw[line width=1pt, blue] (y4)--(x4)--(a4)--(b4)--(c4)--(d4)--(x4);
        \draw[dashed, line width=1pt, red] (a4)--(c4);
        \draw[dashed, line width=1pt, red] (b4)--(d4);
        
        \draw[line width=1pt, blue] (y5)--(x5)--(a5)--(b5)--(c5)--(d5)--(x5);
        \draw[dashed, line width=1pt, red] (a5)--(c5);
        \draw[dashed, line width=1pt, red] (b5)--(d5);
        
        \draw[line width=1pt, blue] (y6)--(x6)--(a6)--(b6)--(c6)--(d6)--(x6);
        \draw[dashed, line width=1pt, red] (a6)--(c6);
        \draw[dashed, line width=1pt, red] (b6)--(d6);
        
          \draw[line width=1pt, blue] (y2)--(y1)--(a)--(b)--(c)--(d);
          \draw[line width=1pt, blue] (c)--(e)--(y3)--(y4);
          \draw[line width=1pt, blue] (d)--(f)--(y5)--(y6);
          \draw[dashed, line width=1pt, red] (a)--(y2);
          \draw[dashed, line width=1pt, red] (b)--(d);
          \draw[dashed, line width=1pt, red] (e)--(y4);
          \draw[dashed, line width=1pt, red] (f)--(y6);
	    \end{tikzpicture} 
    \caption{$\frac{F(G, \sigma)}{v(G)}=\frac{3k+4}{8k+10}$ where $k$ is the number of non-leaf-blocks}  
    \label{fig:k3tree}
    \end{subfigure}
      \caption{Examples illustrating the tightness of~\Cref{thm:1ec}}
    \label{fig:3/8-leaf} 
\end{figure}
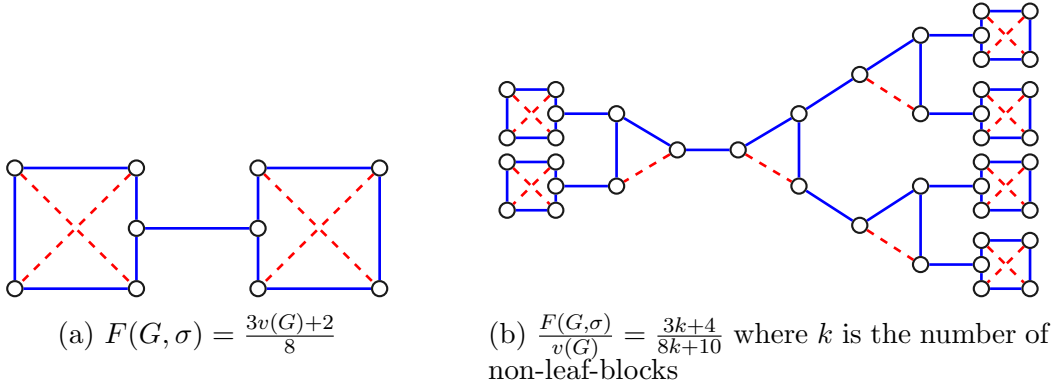 

Note that all the above extremal examples contain certain small cycles. We conjecture that the bounds established above can be further improved by forbidding specific configurations. For example, we propose the following conjecture.

\begin{conjecture}\label{conj3-10}
Every signed connected simple subcubic graph $(G, \sigma)$ with girth at least $5$ satisfies that $F(G, \sigma) \leq \frac{3}{10}v(G)$.
\end{conjecture}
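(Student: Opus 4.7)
The plan is to prove \Cref{conj3-10} by induction on $v(G)$, strengthening the reducible-configuration arguments of \Cref{thm:main} and \Cref{thm:1ec} by exploiting the girth hypothesis. Any signed tree has frustration index $0$, and small-order base cases can be verified directly; notably the Petersen graph $P$ on $10$ vertices with all-negative signature meets the bound with equality, since $F(P,-) = 15 - b(P) = 3 = \tfrac{3}{10}\cdot 10$.

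For the inductive step, I would first reduce to $2$-edge-connected graphs. If $uv$ is a cut-edge, split $G - uv$ into connected components $G_1$ and $G_2$, each connected, simple, subcubic, of girth at least $5$. Since the switching at $V(G_1)$ flips only the edge $uv$, we may optimize each component independently and then force $uv$ positive, yielding $F(G, \sigma) \leq F(G_1, \sigma_1) + F(G_2, \sigma_2) \leq \tfrac{3}{10}v(G)$ by the induction hypothesis. Next I would reduce to minimum degree $3$: a leaf can be deleted after switching its incident edge to positive, and a degree-$2$ vertex $v$ with neighbors $u, w$ can be suppressed into a new edge $uw$ whose sign is the product of $\sigma(uv)$ and $\sigma(vw)$, an operation that preserves the frustration index. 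Girth at least $5$ rules out new triangles after suppression (else $u, v, w$ would already form one), but a $5$-cycle in $G$ through $v$ can collapse to a $4$-cycle in $G'$; in that case one peels off a longer thread of degree-$2$ vertices together with a carefully chosen endpoint, or adopts a different reducible configuration, to restore the inductive hypotheses.

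The remaining and main case is when $G$ is $2$-edge-connected, simple, cubic, and of girth at least $5$. Here $\tfrac{3}{10}v(G) = \tfrac{1}{5}e(G)$, and for $\sigma \equiv -$ the conjectured bound is exactly the Hopkins--Staton inequality $b(G) \geq \tfrac{4}{5}e(G)$ for triangle-free cubic graphs~\cite{HS1982}. The challenge is to lift this bound from the all-negative signature to arbitrary signatures. My approach would be to take a switching-equivalent signature $\sigma^{*}$ minimizing $|E^{-}|$, observe that stability of $\sigma^{*}$ under every single-vertex switch forces $E^{-}_{(G, \sigma^{*})}$ to be a matching $M$ (since each cubic vertex is incident to at most one negative edge in a minimum-frustration signature), and then bound $|M|$ via discharging: assign charge $\tfrac{3}{10}$ to each vertex, let the two endpoints of each edge of $M$ absorb a controlled deficit funnelled to them by the positive edges along the short cycles through $M$, and show that the total absorbed charge forces $|M| \leq \tfrac{3}{10}v(G)$.

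The hardest step is executing this discharging: single-vertex stability alone only yields $|M| \leq \tfrac{v(G)}{2}$, so one must invoke stability under richer switching moves, such as switches along the $5$-cycles that contain edges of $M$, and translate these higher-order stability conditions into clean local inequalities. This is the principal obstacle, and it is plausibly where the girth-$5$ assumption must be used most sharply, since girth $4$ cubic extremal examples like $\widehat{\Gamma}_5$ already violate $\tfrac{3}{10}v(G)$. Should direct discharging prove too delicate, a promising alternative is to adapt the arguments of Zhu~\cite{Z2009a, Z2009b} for triangle-free subcubic graphs by randomizing over switchings rather than bipartitions, treating positive and negative edges symmetrically to recover the $\tfrac{1}{5}e(G)$ bound for signed cubic graphs of girth at least $5$ and thereby settle \Cref{conj3-10}.
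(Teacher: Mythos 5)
This statement is \Cref{conj3-10}, which the paper explicitly leaves as an open problem (``We leave this conjecture for future research''), so there is no proof in the paper to compare against. Your submission is a research plan rather than a proof, and the central step is missing. In the main case (a $2$-edge-connected simple cubic graph of girth at least $5$ with an arbitrary signature), all you actually establish is that a minimizing signature has $E^-$ equal to a matching $M$, which gives only $|M|\le \tfrac{1}{2}v(G)$ --- far from $\tfrac{3}{10}v(G)$. The discharging you propose to close the gap is never executed: you do not specify the ``richer switching moves,'' do not derive the ``clean local inequalities'' they are supposed to yield, and do not verify that the charges balance. You acknowledge this is ``the hardest step'' and ``the principal obstacle,'' and you offer a fallback (adapting Zhu's triangle-free arguments by ``randomizing over switchings'') that is likewise only an aspiration. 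Note also that the conjecture for arbitrary signatures is strictly stronger than the Hopkins--Staton bound $b(G)\ge \tfrac{4}{5}e(G)$, which covers only the all-negative signature; lifting from one signature to all of them is exactly the content of the conjecture, not a technicality.

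There is a second, more localized gap in your reduction to the cubic $2$-edge-connected case. Suppressing a $2$-vertex lying on a $5$-cycle produces a $4$-cycle, so the class ``girth at least $5$'' is not closed under your reduction, and the inductive hypothesis no longer applies to the reduced graph. This is not a cosmetic issue: $\widehat{\Gamma}_5$ shows that the bound $\tfrac{3}{10}v(G)$ genuinely fails at girth $4$, so any induction must either stay inside girth $\ge 5$ or carry a stronger hypothesis on the girth-$4$ graphs that can arise. Your proposed remedies (``peel off a longer thread \ldots or adopt a different reducible configuration'') are not specified, so this step is also unproved. In short, the proposal identifies the right difficulties but resolves none of them; the statement remains open.
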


The condition of girth $5$ in~\Cref{conj3-10} is necessary due to the signed graph $\widehat{\Gamma}_5$ in~\Cref{fig:Cube}. Note that the bound $\frac{3}{10}v(G)$ is tight if it is true, as evidenced by a signed Petersen graph with all edges negative, which attains this bound. We leave this conjecture for future research.

\medskip
We now introduce some terminologies in the end of this section.
Let $G$ be a graph. Let $X$ be a vertex subset of $V(G)$ and $u$ be a vertex of $V(G)$. We use $N(u)$ to denote the set of neighbors of $u$ in $G$, and use $d_X(u)$ to denote the number of neighbors of $u$ that are in $X$. For a positive integer $k$, a vertex $u$ is called a \emph{$k$-vertex} if its degree is equal to $k$, that is, $|N(u)|=k$. For any $X\subset V(G)$, let $[X, X^c]$ denote the \emph{edge-cut} between $X$ and $X^c$. An edge-cut $[X, X^c]$ is called a \emph{$k$-edge-cut} if it consists of $k$ edges. A graph $G$ is \emph{$k$-edge-connected} if every edge-cut of $G$ contains at least $k$ edges. 
In this paper, to simplify the notation, we 
write $[x_1,x_2,...,x_n]$ for the edge-cut $[ \{ x_1,x_2,...,x_n  \}, \{ x_1,x_2,...,x_n \}^c ]$.

Let $(G, \sigma)$ be a signed graph. A vertex $x$ of $(G, \sigma)$ is called a \emph{negative} (or \emph{positive}) neighbor of $y$ if the edge $xy$ is negative (or positive) in $(G, \sigma)$. A \emph{digon} is a signed graph on $2$ vertices with two parallel edges, one positive and the other negative.

We say an edge-cut (or simply, a cut) of a signed graph is \emph{unequilibrated} if it contains more negative edges than positive ones, and \emph{equilibrated} if it contains the same number of negative edges and positive edges.

In this paper, we define the \emph{subdivision} operation of any signed graph as follows: to subdivide a negative edge $xy$, delete $xy$, add a new vertex $z$, then add a positive edge $xz$ and a negative edge $yz$; to subdivide a positive edge $xy$, delete $xy$, add a new vertex $z$, then add two positive edges $xz$ and $yz$. 

In the sequel, we adopt the following convention: when constructing a signed graph $(G', \sigma')$ from the original signed graph $(G, \sigma)$, we assume that every unchanged edge inherits its sign from the original signature. 

\medskip
The remainder of the paper is organized as follows: In the next section, we first prove~\Cref{thm:main} under the assumption that a key lemma (\Cref{lem:key}) holds, and then proceed to establish~\Cref{thm:1ec}. In~\Cref{sec:9vertices}, we characterize all exceptional signed subcubic graphs of~\Cref{thm:main} on at most $9$ vertices. Specifically, we explain how we discover the five exceptional signed graphs. In~\Cref{sec:keylemma}, we first establish some properties of the minimum counterexample to~\Cref{thm:main}, and finally provide a proof of~\Cref{lem:key}.

\section[proof]{Frustration indices of signed subcubic graphs}
We start with the following well-known observation in signed graphs.
\begin{observation}\label{lem:bad-cut}
Let $(G,\sigma)$ be a signed graph with $|E^-_{(G, \sigma)}|=F(G,\sigma)$. For any $k$-edge-cut $[X,X^c]$ of $(G, \sigma)$, we have $|E^-_{(G, \sigma)} \cap [X,X^c]|\leq \lfloor \frac{k}{2} \rfloor$.
\end{observation}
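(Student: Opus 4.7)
The plan is to argue by contradiction. Suppose that some $k$-edge-cut $[X, X^c]$ of $(G, \sigma)$ contains $m$ negative edges with $m > \lfloor k/2 \rfloor$. I will perform a switching at $[X, X^c]$ to produce a switching-equivalent signature $\sigma'$ whose total number of negative edges is strictly less than $|E^-_{(G, \sigma)}|$, contradicting the hypothesis $|E^-_{(G, \sigma)}| = F(G, \sigma)$.

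First I would recall from the definition given in the introduction that switching at $[X, X^c]$ flips precisely the signs of the $k$ edges inside the cut and preserves the signs of all other edges, and that the resulting signature $\sigma'$ is by definition switching-equivalent to $\sigma$. Hence edges outside $[X, X^c]$ contribute the same number of negative edges to $\sigma'$ as to $\sigma$, while edges inside the cut contribute $k - m$ negative edges under $\sigma'$ instead of $m$. The net change is
\[
|E^-_{(G, \sigma')}| - |E^-_{(G, \sigma)}| = (k - m) - m = k - 2m.
\]
From $m > \lfloor k/2 \rfloor$ I would check both parities of $k$: if $k$ is even then $m \geq k/2 + 1$ yields $k - 2m \leq -2$, and if $k$ is odd then $m \geq (k+1)/2$ yields $k - 2m \leq -1$. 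In either case $k - 2m < 0$, so $|E^-_{(G, \sigma')}| < |E^-_{(G, \sigma)}| = F(G, \sigma)$, contradicting the fact that $F(G, \sigma)$ is the minimum of $|E^-_{(G, \tau)}|$ over all signatures $\tau$ switching-equivalent to $\sigma$.

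There is essentially no obstacle in this argument. The only things worth keeping straight are that switching preserves the switching-equivalence class (so $\sigma'$ is a legitimate competitor in the minimum defining $F(G, \sigma)$) and the floor-arithmetic distinguishing the even and odd cases of $k$.
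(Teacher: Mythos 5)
Your proof is correct and is exactly the standard argument the paper has in mind (the paper states this as a well-known observation without proof): switching at an unequilibrated cut strictly decreases the number of negative edges, contradicting the minimality of $|E^-_{(G,\sigma)}|$. The parity check confirming $k-2m<0$ whenever $m>\lfloor k/2\rfloor$ is handled correctly.
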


It follows that a signed graph $(G, \sigma)$ such that $|E^-_{(G, \sigma)}|=F(G, \sigma)$ cannot have any unequilibrated edge-cut. In particular, at each vertex of a signed subcubic graph $(G, \sigma)$ with $|E^-_{(G,\sigma)}|=F(G, \sigma)$, there is at most one negative edge incident to it. A direct consequence is that for all subcubic graphs $(G, \sigma)$, $F(G, \sigma)\leq \frac{1}{2}v(G)$.

Let $k$ be a positive integer.
Let $\mathcal{T}$ denote the set of signed trees $T$ where $v(T)=2k+1$, and $[V(T), V(T)^c]$ containing at least $k+2$ negative edges. Similarly, let $\mathcal{C}$ denote the set of signed odd cycles $C$ with $v(C)=2k+1$ such that $[V(C), V(C)^c]$ contains at least $k+1$ negative edges. A signed graph is called $\mathcal{T}$-free ($\mathcal{C}$-free, respectively) if it does not contain a tree $T\in \mathcal{T}$ (a cycle $C\in \mathcal{C}$, respectively) as an induced subgraph. Note that in any subcubic graph containing a tree $T$ or a cycle $C$ as an induced subgraph, the cut $[V(T), V(T)^c]$ contains at most $v(T)+2$ edges, while the cut $[V(C), V(C)^c]$ contains at most $v(C)$ edges. This observation leads to the following lemma.

\begin{lemma}\label{lem:tree}
Every signed subcubic graph $(G, \sigma)$ such that $|E^-_{(G, \sigma)}|=F(G, \sigma)$ is $\mathcal{T}$-free and $\mathcal{C}$-free.
\end{lemma}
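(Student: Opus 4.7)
The plan is to derive both statements as direct consequences of \Cref{lem:bad-cut}, using only the subcubic degree constraint to bound the size of the relevant edge-cut. Suppose for contradiction that $(G,\sigma)$ contains an induced signed tree $T \in \mathcal{T}$ on $2k+1$ vertices. I would first count the edges of the cut $[V(T), V(T)^c]$ by a degree-sum argument: since every vertex in $V(T)$ has degree at most $3$ in $G$, and exactly $2(v(T)-1)=4k$ degree is consumed by the $2k$ edges of $T$ itself, the cut contains at most $3(2k+1)-4k=2k+3$ edges. Applying \Cref{lem:bad-cut} to this cut then yields at most $\lfloor (2k+3)/2 \rfloor = k+1$ negative edges in $[V(T),V(T)^c]$, which contradicts the assumption $T \in \mathcal{T}$ that there are at least $k+2$ negative edges in this cut.

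The cycle case follows by the same recipe. For an induced odd cycle $C$ on $2k+1$ vertices, the cycle itself consumes degree $2(2k+1)$ at its vertices, so the cut $[V(C),V(C)^c]$ has size at most $3(2k+1)-2(2k+1)=2k+1$. A second application of \Cref{lem:bad-cut} bounds the negatives in the cut by $\lfloor (2k+1)/2 \rfloor = k$, contradicting $C \in \mathcal{C}$.

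There is no genuine obstacle here: once \Cref{lem:bad-cut} is available, the lemma reduces to the already-stated observation that in a subcubic host, induced trees and induced cycles have small boundary. The only care needed is to make sure the counting uses the fact that $T$ and $C$ are \emph{induced} subgraphs (so no internal edges are miscounted) and that the parity floor is taken in the inequality \(|E^- \cap [X,X^c]| \le \lfloor |[X,X^c]|/2\rfloor\); both arithmetic checks are the ones written above. No switching operation beyond the one implicit in \Cref{lem:bad-cut} is required.
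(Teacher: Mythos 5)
Your proof is correct and is essentially the paper's argument: the authors justify the lemma by the same degree-count showing that an induced tree on $2k+1$ vertices has boundary at most $v(T)+2=2k+3$ and an induced odd cycle has boundary at most $v(C)=2k+1$, and then invoke \Cref{lem:bad-cut} to cap the negative edges in those cuts at $k+1$ and $k$, respectively. Your arithmetic and the role of the induced hypothesis are exactly as in the paper.
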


Next we start to prove~\Cref{thm:main}. Let $(G, \sigma)$ be a minimum counterexample to~\Cref{thm:main} with $v(G)$ being minimized. Thus $(G,\sigma)$ is subcubic, simple, $2$-edge-connected, not isomorphic to any of signed graphs in~\Cref{fig:Exception}, and satisfies that $F(G, \sigma)>\frac{1}{3}v(G)$. Furthermore, among all switching-equivalent signatures, we choose the signature $\sigma$ such that $|E^-_{(G, \sigma)}|=F(G, \sigma).$

We define two subsets $X$ and $Y$ of $V(G)$ with respect to the signature $\sigma$: 
$$X:=\{u\mid u \text{~is not incident to any negative edge in~} (G, \sigma)\} \text{~and ~} Y=V(G)-X.$$ Thus $(X, Y)$ is a bipartition of $V(G)$. Since $E^-_{(G, \sigma)} \subset E(G[Y])$, every vertex of $Y$ has at least one neighbor in $Y$, i.e., $d_X(u)\leq 2$ for every vertex $u\in Y$. Moreover, for each $i\in \{0,1,2,3\}$ and $j\in \{0,1,2\}$, we define that
$$X_i:=\{u\in X\mid d_Y(u)=i\},~\text{~and~}~Y_j:=\{v\in Y\mid d_X(v)=j\}.$$
Note that $X=\bigcup\limits_{k=0}^{3} X_k$ and $Y=\bigcup\limits_{\ell=0}^{2} Y_\ell$. We shall establish the following key lemma, with the proof postponed to~\Cref{sec:keylemma}.

\begin{lemma}\label{lem:key}
In the minimum counterexample $(G, \sigma)$, $|X_3|+|Y_0|\leq |Y_2|$.
\end{lemma}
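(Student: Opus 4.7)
The plan is to construct an injection $\phi: X_3 \cup Y_0 \to Y_2$, which immediately yields $|X_3|+|Y_0|\le |Y_2|$. I first record several consequences of the minimality of $(G,\sigma)$ and of $\sigma$. Since $|E^-_{(G,\sigma)}|=F(G,\sigma)$, \Cref{lem:bad-cut} forces every $v\in Y$ to meet exactly one negative edge; thus the negative edges form a perfect matching $M$ on $Y$, $|Y|$ is even, and $F(G,\sigma)=|Y|/2$. By \Cref{lem:tree}, $(G,\sigma)$ is both $\mathcal T$-free and $\mathcal C$-free, and by $2$-edge-connectivity every vertex has degree $2$ or $3$. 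For each $v\in Y$, let $m(v)$ denote its $M$-partner.

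My first step is to produce, for each $u\in X_3$ and each $v\in Y_0$, a candidate image in $Y_2$ at controlled distance. For $u\in X_3$ with positive neighbors $y_1,y_2,y_3\in Y_1\cup Y_2$, I aim to show that at least one $y_i$ lies in $Y_2$. Otherwise $y_1,y_2,y_3\in Y_1$, and together with $m(y_1),m(y_2),m(y_3)$ one obtains (generically) an induced tree on $7$ vertices whose outgoing edge-cut carries too many negative edges, contradicting the $\mathcal{T}$-freeness given by \Cref{lem:tree}; any coincidence among these seven vertices, or any extra chord, creates a short induced structure that is ruled out by $\mathcal{C}$-freeness or by the fact that $(G,\sigma)\notin\{\widehat{\Gamma}_1,\dots,\widehat{\Gamma}_5\}$. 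For $v\in Y_0$, all neighbors of $v$ lie in $Y$; exploring the $M$-alternating neighborhood of radius two around $v$ and applying the same freeness constraints, I would locate a vertex of $Y_2$ within distance $2$ of $v$ to serve as a candidate target.

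The second step is to upgrade these candidates into a genuine injection. Let $B$ be the bipartite graph with parts $X_3\cup Y_0$ and $Y_2$, where $u\sim w$ whenever $w$ is a candidate for $u$ in the sense above. To produce $\phi$, it suffices to verify Hall's condition on $B$. If Hall's condition fails on some $S\subseteq X_3\cup Y_0$ with $|N_B(S)|<|S|$, the bounded combined neighborhood of $S$ in $(G,\sigma)$ either contains an induced $T\in\mathcal{T}$ or $C\in\mathcal{C}$ (contradicting \Cref{lem:tree}), or admits a local reduction (deleting a matched pair, suppressing a degree-$2$ vertex, or switching along a small edge-cut) that produces a smaller signed $2$-edge-connected subcubic graph $(G',\sigma')$; by the minimality of $(G,\sigma)$ we would then have $F(G',\sigma')\le \tfrac{1}{3}v(G')$, and reversing the reduction would give $F(G,\sigma)\le \tfrac{1}{3}v(G)$, contradicting the counterexample assumption.

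The hard part will be the collision analysis in the second step: tightly controlling the interaction between the candidate targets of distinct vertices in $X_3\cup Y_0$ that lie within distance $2$ of each other. I expect this case analysis to constitute the technical core of \Cref{sec:keylemma}, requiring the full strength of $\mathcal T$- and $\mathcal C$-freeness, $2$-edge-connectivity, and the explicit exclusion of the five exceptional signed graphs $\widehat{\Gamma}_1,\dots,\widehat{\Gamma}_5$.
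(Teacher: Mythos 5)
There is a genuine gap, and it is quantitative. Your first step only claims that each $u\in X_3$ has \emph{at least one} neighbor in $Y_2$, and your second step hopes that Hall's condition on the resulting candidate graph $B$ can be verified by localizing any violating set $S$ to a bounded neighborhood. Neither half works as stated. With only one candidate per source vertex, Hall's condition can fail for trivial reasons (two $X_3$-vertices sharing their unique $Y_2$-neighbor), and a violating set $S$ need not be localized: it can be large and spread across the graph, so there is no reason its "bounded combined neighborhood" contains a forbidden induced $T\in\mathcal T$ or $C\in\mathcal C$, nor that a single local reduction certifies anything about all of $S$ at once. The fallback "reverse the reduction to get $F(G,\sigma)\le\frac13 v(G)$" is the strategy of the paper's structural lemmas, but those are applied to specific small configurations, not to an arbitrary Hall violator. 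What actually makes the argument close is the stronger claim that every $u\in X_3$ has \emph{at least two} neighbors in $Y_2$ (the paper's Lemma~\ref{lem:Vertex-A1}). Once you have that, no matching machinery is needed: each $w\in Y_2$ has $d_X(w)=2$ by definition, hence at most two neighbors in $X_3$, and double counting the $X_3$--$Y_2$ edges gives $2|X_3|\le\sum_{u\in X_3}d_{Y_2}(u)=\sum_{w\in Y_2}d_{X_3}(w)\le 2|Y_2|$ directly. (Equivalently, degree $\ge 2$ on the left and $\le 2$ on the right does make Hall's condition automatic, but then the injection formalism is superfluous.)

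Your handling of $Y_0$ is also more complicated than necessary and papering over the real content. In the minimum counterexample $Y_0=\emptyset$: a vertex $v\in Y_0$ has two positive neighbors, both in $Y$; if they are nonadjacent one gets an induced member of $\mathcal T$, and if adjacent one gets a negative triangle, which the paper separately excludes (Lemma~\ref{lem:c3negative}). So no "distance-$2$ candidate" search is needed. The genuine technical core you are deferring is the proof that each $X_3$-vertex has two $Y_2$-neighbors, which in the paper rests on a battery of reducible configurations (no negative triangles, no adjacent triangles, the exclusions of $\widehat H_1$, $\widehat H_2$, and the adjacent $4$-cycle / $3$--$5$-cycle configurations). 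Without the "two neighbors" strengthening and without $Y_0=\emptyset$, your outline does not yield the inequality.
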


We first prove~\Cref{thm:main} assuming that~\Cref{lem:key} holds.

\begin{proof}[Proof of \Cref*{thm:main}]
Assume that $(G,\sigma)$ is a minimum counterexample with $v(G)$ being minimized and $|E^-_{(G, \sigma)}|=F(G, \sigma)$. Let $X,Y$, $X_i$ (for $i\in \{0,1,2,3\}$), and $Y_j$ (for $j\in \{0,1,2\}$) be defined as above.
By~\Cref{lem:key}, we have $|X_3|+|Y_0|\leq |Y_2|$, i.e., $|X_3|\leq |Y_2|-|Y_0|$. Note that $|X|+|Y|=v(G)$, $|X_2|=v(G)-|Y|-|X_0|-|X_1|-|X_3|$, and $|Y_1|=|Y|-|Y_0|-|Y_2|$. Note that
\begin{align*}
\sum\limits_{u\in X}d_Y(u) &=\sum\limits_{i=0}^{3}\sum\limits_{u\in X_i}d_Y(u)= 0|X_0|+|X_1|+2|X_2|+3|X_3|\\
&=|X_1|+2(v(G)-|Y|-|X_0|-|X_1|-|X_3|)+3|X_3|,
\end{align*}
and
\begin{align*}
\sum\limits_{v\in Y}d_X(v) 
&= \sum\limits_{i=0}^{2}\sum\limits_{v\in Y_i}d_X(v)=0|Y_0|+|Y_1|+2|Y_2|=|Y|-|Y_0|+|Y_2| \geq |Y|+|X_3|.
\end{align*}
Since $\sum\limits_{u\in X}d_Y(u)=\sum\limits_{v\in Y}d_X(v)$, combining the above two inequalities, $$ |Y|+|X_3|\leq |X_1|+2(v(G)-|Y|-|X_0|-|X_1|-|X_3|)+3|X_3|,$$ solving which we get $|Y|\leq \frac{2}{3}v(G)-\frac{1}{3}(2|X_0|+|X_1|)$. Since $|E^-_{(G, \sigma)}|=F(G, \sigma)$, by~\Cref{lem:bad-cut} $(G, \sigma)$ contains no adjacent negative edges. Thus the number of negative edges of $(G, \sigma)$ is at most half the size of the set $Y$, i.e., $|E^-_{(G, \sigma)}|\leq \frac{1}{2}|Y|$. Hence, $$F(G, \sigma)\leq \frac{1}{3}v(G)-\frac{1}{3}|X_0|-\frac{1}{6}|X_1|,$$ a contradiction. This completes the proof of~\Cref{thm:main}.
\end{proof}

\medskip
Next we shall apply~\Cref{thm:main} to establish~\Cref{thm:1ec}. We first derive a result based on~\Cref{thm:main} that provides a sufficient condition for a signed connected simple subcubic graph $(G, \sigma)$ to have its frustration index at most $\frac{1}{3}v(G)$. 

In the sequel, for every leaf-block $H$, let $e_H$ denote the cut-edge connecting to $H$, and let $v_{H}\in e_H$ denote the vertex in $H$, and $v_{H^c}\in e_H$ denote the vertex not in $H$. 

\begin{proposition}\label{prop:leaf-block}
Let $(G,\sigma)$ be a signed connected simple subcubic graph not isomorphic to any of $\widehat{\Gamma}_1,\ldots, \widehat{\Gamma}_5$. If $(G,\sigma)$ has no leaf-block isomorphic to $\widehat \Gamma_2$, then $F(G,\sigma)\le \frac{1}{3}v(G)$. 
\end{proposition}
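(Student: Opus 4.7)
The plan is to induct on $v(G)$. If $G$ is $2$-edge-connected, the conclusion is immediate from~\Cref{thm:main}, since $(G,\sigma)$ is not isomorphic to any $\widehat{\Gamma}_i$. So I would assume $G$ contains a cut-edge, whence $(G,\sigma)$ has at least two leaf-blocks, and pick one, $H$, with cut-edge $e_H = v_H v_{H^c}$. Because $[V(H), V(H)^c] = \{e_H\}$, a switching at $V(H)$ flips only $e_H$, so I may assume $e_H$ is positive.

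The core step is then the decomposition
\[
F(G, \sigma) \;\leq\; F(H, \sigma|_H) \;+\; F\bigl(G - V(H),\, \sigma|_{G - V(H)}\bigr),
\]
which follows by independently optimizing the signatures on $V(H)$ and on $V(G) \setminus V(H)$ and, if needed, correcting the sign of $e_H$ at the end by a further switching at $V(H)$ (which affects no edge other than $e_H$). I would then bound the first summand by $\tfrac{1}{3}v(H)$ via~\Cref{thm:main} applied to $H$, and bound the second summand by $\tfrac{1}{3}v(G - V(H))$ via the induction hypothesis; adding these yields the desired $F(G, \sigma) \leq \tfrac{1}{3}v(G)$.

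Verifying the hypotheses of these two invocations is the main obstacle. For~\Cref{thm:main} applied to $H$: each of $\widehat{\Gamma}_1, \widehat{\Gamma}_3, \widehat{\Gamma}_4, \widehat{\Gamma}_5$ is $3$-regular, so $H$ cannot be one of them (otherwise $e_H$ would force $v_H$ to have degree $4$ in $G$), and $H \cong \widehat{\Gamma}_2$ is forbidden by the hypothesis. For the inductive application to $G - V(H)$: connectedness is immediate since $H$ is a leaf-block, and the same cubicity argument rules out $G - V(H) \cong \widehat{\Gamma}_i$ for $i \in \{1,3,4,5\}$; if $G - V(H) \cong \widehat{\Gamma}_2$, its unique degree-$2$ vertex must coincide with $v_{H^c}$, which would make $\widehat{\Gamma}_2$ itself a leaf-block of $G$, contradicting the hypothesis.

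The most delicate point is excluding a leaf-block $H'$ of $G - V(H)$ with $H' \cong \widehat{\Gamma}_2$. Here I would argue that $v_{H^c} \notin V(H')$: since $\widehat{\Gamma}_2$ has minimum degree $2$ with a unique degree-$2$ vertex, a short degree-count (splitting on whether $v_{H^c}$ equals the cut-vertex of $H'$ inside $G - V(H)$) shows that $v_{H^c} \in V(H')$ forces some vertex of $H'$ to have degree at least $4$ in $G$, which is impossible. With $v_{H^c} \notin V(H')$, the block $H'$ and its cut-edge to the rest of $G - V(H)$ persist verbatim in $G$, so $H'$ would also be a leaf-block of $G$ isomorphic to $\widehat{\Gamma}_2$, again contradicting the hypothesis. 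This closes the verification and hence the induction.
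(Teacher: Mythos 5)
Your proof is correct and is essentially the paper's argument: the paper phrases it as a minimum counterexample rather than an explicit induction, but it performs the same decomposition at a leaf-block $H$ via the single cut-edge $e_H$, bounds $F(H,\sigma|_H)\le\frac{1}{3}v(H)$ by \Cref{thm:main} (ruling out the $3$-regular exceptions by the degree of $v_H$ and $\widehat{\Gamma}_2$ by hypothesis), and recurses on $G-V(H)$ after checking that the only possibly new leaf-block, the one containing $v_{H^c}$, cannot be $\widehat{\Gamma}_2$ because $d_{G-V(H)}(v_{H^c})\le 2$. Your more explicit verification that leaf-blocks of $G-V(H)$ avoiding $v_{H^c}$ persist as leaf-blocks of $G$ is a welcome bit of extra care but does not change the route.
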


\begin{proof}
   Assume that $(G,\sigma)$ is a minimum counterexample with $v(G)$ being minimized. So $F(G,\sigma)>\frac{1}{3}v(G)$. By~\Cref{thm:main}, $G$ must have a cut-edge. Let $H$ be a leaf-block of $(G,\sigma)$ and let $e_H$ be the cut-edge connecting $H$ with $G-H$. Note that $d_H(v_H)\leq 2$ and $d_{G-H}(v_{H^c})\leq 2$. By the assumption, $H$ is not isomorphic to $\widehat \Gamma_2$ and, moreover, as $d_H(v_H)\leq 2$, $H$ is not isomorphic to any of the exceptional signed graphs in~\Cref{fig:Exception}. Hence, applying~\Cref{thm:main} to $(H, \sigma|_H)$, we have $F(H, \sigma|_{H})\le \frac{1}{3}v(H)$. Now we consider the signed subgraph $(G-H, \sigma|_{G-H})$. If $G-H$ is also a block (which is a leaf-block), then as $d_{G-H}(v_{H^c})\leq 2$, by symmetry we have $F(G-H, \sigma|_{G-H})\leq \frac{1}{3}v(G-H)$. Since $e_H$ is a cut-edge, $$F(G, \sigma)=F(H, \sigma|_H)+F(G-H, \sigma|_{G-H}) \leq \frac{1}{3}v(H)+\frac{1}{3}v(G-H)= \frac{1}{3}v(G),$$ a contradiction. Otherwise, $G-H$ contains at least one cut-edge. Since $d_{G-H}(v_{H^c})\le 2$, the newly-built leaf-block of $G-H$ cannot be isomorphic to $\widehat \Gamma_2$. Noting that $F(G-H, \sigma|_{G-H})=F(G, \sigma)-F(H, \sigma|_H)>\frac{1}{3}v(G)-\frac{1}{3}v(H)=\frac{1}{3}v(G-H)$, $G-H$ is a smaller counterexample, a contradiction.
\end{proof}

Now we provide the proof of~\Cref{thm:1ec}.

\begin{proof}[Proof of the first part of \Cref*{thm:1ec}]
Assume to the contrary that $(G, \sigma)$ is a signed connected simple subcubic graph, not isomorphic to $\widehat{\Gamma}_1$, satisfying $F(G,\sigma)> \frac{3v(G)+2}{8}$,
and with $v(G)$ being minimized. As $F(\widehat \Gamma_i,\sigma)< \frac{3v(G)+2}{8}$ for $i\in \{2,3,4,5\}$, by~\Cref{thm:main}, $(G, \sigma)$ contains at least one cut-edge.

We begin by establishing that every leaf-block of $(G,\sigma)$ is isomorphic to $\widehat \Gamma_2$. Assume not and let $H$ be a leaf-block that is not isomorphic to $\widehat \Gamma_2$. Note that since $d_H(v_H)\leq 2$, $H$ is not isomorphic to any of the exceptional signed graphs $\widehat{\Gamma}_i$'s. By~\Cref{thm:main}, $F(H,\sigma|_{H})\le \frac{v(H)}{3}<\frac{3v(H)}{8}$. Since $e_H$ is a cut-edge, we have $$F(G-H, \sigma|_{G-H})=F(G,\sigma)-F(H,\sigma|_{H})>\frac{3v(G)+2}{8}-\frac{3v(H)}{8} =\frac{3v(G-H)+2}{8}.$$ Observing that $d_{G-H}(v_{H^c})\leq 2$, $(G-H, \sigma|_{G-H})$ cannot be isomorphic to $\widehat{\Gamma}_1$, contradicting the minimality of $(G, \sigma)$.

Next, we show that for each leaf-block $H$ of $(G, \sigma)$, $d_G(v_{H^c})=3$. It is easy to see that $d_G(v_{H^c})\ge 2$. If $d_G(v_{H^c})=2$, then we denote the other neighbor of $v_{H^c}$ by $v$, and form a new signed graph by deleting $v_{H^c}$ and adding a positive edge $v_Hv$. The resulting signed graph has the same frustration index as $(G, \sigma)$ but fewer vertices, and not isomorphic to $\widehat \Gamma_1$, contradicting the minimality of $(G, \sigma)$.

We then claim that for each leaf-block $H$ of $(G, \sigma)$, the vertex $v_{H^c}\in e_H$ is in a triangle. Let $x$ and $y$ be the two neighbors of $v_{H^c}$ in $G-H$, and assume to the contrary that $xy\not\in E(G)$. We form a new signed graph $(G', \sigma')$ from $(G, \sigma)$ by deleting $H$ and the vertex $v_{H^c}$, and adding an edge $xy$ with $\sigma'(xy)=\sigma(xv_{H^c})\sigma(yv_{H^c})$. The signed graph $(G', \sigma')$ is simple and connected. Moreover, we have that $$F(G',\sigma')=F(G, \sigma)-F(H, \sigma|_H)>\frac{3v(G)+2}{8}-2=\frac{3(v(G')+6)+2}{8}-2>\frac{3v(G')+2}{8}.$$ Note that $(G',\sigma')$ is not isomorphic to $\widehat \Gamma_1$, as otherwise $(G,\sigma)$ contains $10$ vertices with $F(G,\sigma)=4$, contradicting the assumption of $F(G,\sigma)> \frac{3v(G)+2}{8}$. So $(G',\sigma')$ is a smaller counterexample, a contradiction. 

Moreover, we prove that each leaf-block is connected to a triangle via a cut-edge, and that each such triangle is associated with exactly one leaf-block. Let $H$ be a leaf-block. By the previous claims, it is connected to a triangle $v_{H^c}xy$. Assume to the contrary that at least one of $x$ and $y$ coincides with some $v_{H^c_0}$, say $x=v_{H^c_0}$. If $y$ is a $2$-vertex, then $(G,\sigma)$ is a signed graph on $13$ vertices with $F(G,\sigma)\leq 5$, contradicting the assumption of $F(G,\sigma)> \frac{3v(G)+2}{8}$. So $y$ is a $3$-vertex and let $y'$ be the third neighbor of $y$. In this case, we form a signed graph $(G'', \sigma'')$ by deleting $H_0,v_{H_0^c},x$ and $y$, and adding an edge $v_{H_0^c}y'$ with $\sigma'(v_{H_0^c}y')=\sigma(yy')$. Noting that $(G'',\sigma'')$ is simple and not isomorphic to $\widehat{\Gamma}_1$, we have that $$F(G'',\sigma'')\geq F(G, \sigma)-3> \frac{3v(G)+2}{8}-3=\frac{3(v(G'')+8)+2}{8}-3=  \frac{3v(G'')+2}{8},$$
that is to say, $(G'',\sigma'')$ is a smaller counterexample, a contradiction. 

\smallskip
Let $H_1,\ldots, H_m$ be the leaf-blocks of $G$, and let $(G^*, \sigma^*)$ denote the signed graph $(G-\cup H_i, \sigma|_{G-\cup H_i})$. Note that the triangles corresponding to distinct vertices $v_{H_i}$ are vertex-disjoint in $G$. Assume not, and if there exist two triangles $v_{H_i}xy$ and $v_{H_j}xy$ sharing the same edge $xy$, then $(G, \sigma)$ is a signed graph on $14$ vertices with $F(G,\sigma)\leq 5$, contradicting the assumption of $F(G,\sigma)> \frac{3v(G)+2}{8}$. Since $d_{G^*}(v^c_{H_i})=2$ for each $i\in \{1,2,\ldots, m\}$, $(G^*, \sigma^*)$ contains at least $m$ vertices of degree $2$, each lying in a distinct triangle and all of which are pairwise vertex-disjoint. Then $v(G^*)\geq 3m$. Moreover, $(G^*, \sigma^*)$ is not isomorphic to any of $\widehat{\Gamma}_i$'s for $i\in \{1,2,\ldots, 5\}$. Therefore, as $(G^*, \sigma^*)$ has no leaf-block isomorphic to $\widehat \Gamma_2$, by~\Cref{prop:leaf-block}, $F(G^*, \sigma^*)\le \frac{1}{3}v(G^*)$. Thus we have $$\frac{F(G,\sigma)}{v(G)}=\frac{F(G^*, \sigma^*)+2m}{v(G^*)+5m}\le \frac{\frac{1}{3}v(G^*)+2m}{v(G^*)+5m}\leq \frac{3}{8},$$ where the last inequality follows from the fact of $v(G^*)\ge 3m$, a contradiction.
\end{proof}

The final part of our analysis is devoted to characterizing the family of signed graphs that achieve the bound $\frac{3v(G) + 2}{8}$.

\begin{proof}[Proof of the moreover part of \Cref*{thm:1ec}]

One direction is straightforward. If a signed connected simple graph $(G, \sigma)$ is cubic and contains at least one cut-edge, with each leaf-block isomorphic to $\widehat{\Gamma}_2$ and each non-leaf-block isomorphic to a negative triangle, then $(G, \sigma)$ can be constructed from a cubic tree $T$ (in which all non-leaf vertices have degree $3$) by replacing each $3$-vertex with a negative triangle, and attaching each leaf vertex to the unique $2$-vertex of a distinct copy of $\widehat{\Gamma}_2$. It is easy to see that if $|E^-_{(G, \sigma)}|=F(G, \sigma)$, then $(G, \sigma)$ has $8k+10$ vertices with exactly $3k+4$ negative edges, where $k$ is the number of $3$-vertices in $T$. Hence, $F(G,\sigma)=\frac{3v(G)+2}{8}$.

We now show the other direction.

\begin{claim}\label{claim:if}
If a signed connected simple subcubic graph $(G,\sigma)$ not isomorphic to $\widehat{\Gamma}_1$ satisfies that $F(G,\sigma)= \frac{3v(G)+2}{8}$, then $(G,\sigma)$ is cubic and contains at least one cut-edge, each leaf-block is isomorphic to $\widehat{\Gamma}_2$, and each other block is isomorphic to a negative triangle.
\end{claim}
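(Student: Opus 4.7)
My plan is to argue by minimum counterexample on $v(G)$, rerunning the reduction chain from the proof of the first part of \Cref{thm:1ec} but tracking equality rather than strict inequality. Let $(G,\sigma)$ be a minimum counterexample: $F(G,\sigma)=\tfrac{3v(G)+2}{8}$, $G\not\cong\widehat\Gamma_1$, but $G$ lacks the claimed structure. The two opening steps go through verbatim. First, $G$ must have a cut-edge: otherwise \Cref{thm:main} gives $F(G)\leq v(G)/3<\tfrac{3v(G)+2}{8}$ unless $G\in\{\widehat\Gamma_1,\ldots,\widehat\Gamma_5\}$, and a direct check shows none of the five exceptions attains the equality. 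Second, every leaf-block is $\widehat\Gamma_2$: for any other leaf-block $H$ one has $d_H(v_H)\leq 2$ ruling out the cubic exceptions, so $F(H)\leq v(H)/3$ by \Cref{thm:main}, whence $F(G-H)>\tfrac{3v(G-H)+2}{8}$, contradicting the first part of \Cref{thm:1ec} (with $G-H\neq\widehat\Gamma_1$ because $v_{H^c}$ has degree at most two in $G-H$).

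Fix a leaf-block $H$ and let $u=v_{H^c}$. Suppressing $u$ when $d_G(u)=2$ produces $G'$ with $v(G')=v(G)-1$ and $F(G')=F(G)$, forcing $F(G')=\tfrac{3v(G')+5}{8}>\tfrac{3v(G')+2}{8}$; since $v(G)\equiv 2\pmod 8$ (as $F(G)=\tfrac{3v(G)+2}{8}$ must be an integer) while $v(\widehat\Gamma_1)=4\equiv 4\pmod 8$, we have $G'\neq\widehat\Gamma_1$, and applying the first part of \Cref{thm:1ec} to $G'$ gives a contradiction, forcing $d_G(u)=3$. If the two $G-H$-neighbors $x,y$ of $u$ satisfy $xy\notin E(G)$, the $2$-path reduction produces $G'$ with $v(G')=v(G)-6$ and $F(G')=F(G)-2$, again forcing $F(G')>\tfrac{3v(G')+2}{8}$; since now $v(G')\equiv 4\pmod 8$, equality is possible only if $G'=\widehat\Gamma_1$, and un-reducing yields precisely \Cref{fig:leaf-leaf} (two $\widehat\Gamma_2$'s joined by a cut-edge), the base case of the claim. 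So we may assume $u$ lies in a triangle $uxy\subset G-H$ for every leaf-block $H$.

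Set $G^*=G-\bigcup_i H_i$ with $m$ the number of leaf-blocks. If the triangles attached to distinct leaf-blocks were pairwise vertex-disjoint, then $v(G^*)\geq 3m$, and since $G^*$ has no $\widehat\Gamma_2$ leaf-block and (by a short check) is not isomorphic to any $\widehat\Gamma_i$, \Cref{prop:leaf-block} gives $F(G^*)\leq v(G^*)/3$, yielding $F(G)/v(G)\leq 3/8<\tfrac{3v(G)+2}{8v(G)}$, a contradiction. Hence there exist two leaf-blocks $H,H_0$ whose triangles share a vertex, i.e., one of the two triangle neighbors of $u$ equals $v_{H_0^c}$. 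The $8$-vertex reduction of the first-part proof (delete $H_0$ together with $\{u,x,y\}$ and add a single reconnecting edge carrying the appropriate product of signs) then produces $G''$ with $v(G'')=v(G)-8\equiv 2\pmod 8$ and $F(G'')=\tfrac{3v(G'')+2}{8}$; since $v(G'')\not\equiv 4\pmod 8$ we have $G''\neq\widehat\Gamma_1$, so by minimality $G''$ has the claimed structure (cubic, all leaf-blocks $\widehat\Gamma_2$, all non-leaf-blocks negative triangles), and the inverse reduction attaches one more $\widehat\Gamma_2$ leaf-block together with one more negative triangle to $G''$ in precisely the pattern the claim prescribes. The main obstacle I anticipate is the bookkeeping in this last reduction: verifying that the added edge creates no multi-edge or loop, that $G''$ remains connected and subcubic, that $G^*$ is genuinely not isomorphic to any $\widehat\Gamma_i$ (so \Cref{prop:leaf-block} indeed applies), and that the inverse reduction unambiguously reconstructs the cubic plus triangle plus $\widehat\Gamma_2$ structure of $G$ — all of which follow from careful degree tracking together with the parity constraint $v(G)\equiv 2\pmod 8$.
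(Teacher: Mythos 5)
Your opening steps (cut-edge exists, every leaf-block is $\widehat{\Gamma}_2$, $d_G(v_{H^c})=3$, and either $v_{H^c}$ lies in a triangle or $G$ is the graph of \Cref{fig:leaf-leaf}) match the paper. After that you diverge genuinely. The paper proves that every non-leaf-block is a negative triangle by a local case analysis around the attached triangle $v_{H^c}xy$ (degrees of $x,y$; whether $x'=y'$; whether $x'y'\in E(G)$; the two sub-cases for the block $Q'$ of $(G_2,\sigma_2)$), and then separately proves cubicity. You instead exploit the integrality constraint $v(G)\equiv 2\pmod 8$, reuse the global counting from the first-part proof to force two leaf-blocks to attach to a \emph{common} triangle, and then peel off one $(\widehat{\Gamma}_2+\text{triangle})$ pair by an $8$-vertex reduction and induct. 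This is a legitimately different and arguably cleaner route: the parity constraint disposes for free of several sporadic configurations the paper must argue away (the $13$-, $14$-vertex graphs, the $\widehat{\Gamma}_1$ outcomes of the reductions), and the frustration accounting in your reduction needs no unequilibrated-cut analysis because $e_H$, $e_{H_0}$ and $yy'$ are all cut-edges, giving the exact decomposition $F(G,\sigma)=2+F(C_3^{uxy})+F(G'',\sigma'')$. What the paper's local analysis buys in exchange is that it never needs the global counting a second time and handles each block in situ.

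Two points you must still write down. First, the negativity of the peeled triangle $uxy$ is asserted but not proved, and it is essential: if $uxy$ were balanced, $G$ would \emph{not} have the claimed structure even though $G''$ does. It follows in one line from the decomposition above: $F(C_3^{uxy})=F(G,\sigma)-2-F(G'',\sigma'')\ge F(G,\sigma)-2-\frac{3v(G'')+2}{8}=1$, using the first part of \Cref{thm:1ec} on $G''$. (The same computation simultaneously yields $F(G'',\sigma'')=\frac{3v(G'')+2}{8}$, which you need for the induction.) Second, your passage from ``the triangles are not pairwise vertex-disjoint'' to ``one of the triangle neighbours of $u$ equals $v_{H_0^c}$'' silently skips the case of two \emph{distinct} triangles meeting; in a subcubic graph they would have to share an edge, forcing $v(G)=14\not\equiv 2\pmod 8$, and similarly the case $d_G(y)=2$ forces $v(G)=13$ — both are killed by parity, but say so. With these additions, and the routine checks you already flag (no multi-edge $v_Hy'$, $y'\notin V(H)\cup V(H_0)\cup\{u,x\}$, block structure of $G$ versus $G''$), the argument is complete.
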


\begin{proofofclaim}
Let $(G,\sigma)$ be a signed connected simple subcubic graph not isomorphic to $\widehat{\Gamma}_1$ such that $F(G,\sigma)= \frac{3v(G)+2}{8}$. 

Since $F(\widehat \Gamma_i,\sigma)< \frac{3v(G)+2}{8}$ for $i\in \{2,3,4,5\}$ and $(G, \sigma)$ is not isomorphic to $\widehat{\Gamma}_1$, by~\Cref{thm:main}, $(G, \sigma)$ contains at least one cut-edge.

Next we claim that every leaf-block of $(G,\sigma)$ is isomorphic to $\widehat \Gamma_2$. Otherwise, let $H$ be a leaf-block not isomorphic to $\widehat{\Gamma}_2$. Observing that $d_H(v_H)\leq 2$, $(H, \sigma|_{H})$ is not isomorphic to any of the exceptional signed graphs $\widehat{\Gamma}_i$'s. So by~\Cref{thm:main}, $F(H,\sigma|_{H})\le \frac{v(H)}{3}<\frac{3v(H)}{8}$. Then $F(G-H, \sigma|_{G-H})=F(G,\sigma)-F(H,\sigma|_{H})>\frac{3v(G)+2}{8}-\frac{3v(H)}{8} =\frac{3v(G-H)+2}{8}$. By the first part of~\Cref{thm:1ec}, $G-H$ must be isomorphic to $\widehat{\Gamma}_1$, which is not possible as $v^c_H$ is a $2$-vertex in $G-H$.

Moreover, we show that for each leaf-block $H$ of $(G, \sigma)$, $d_G(v_{H^c})=3$ and $v_{H^c}$ is contained in either a triangle or another leaf-block. Observe that $d_G(v_{H^c})\ge 2$. If $d_G(v_{H^c})=2$, let $v$ be the other neighbor of $v_{H^c}$. The new signed graph formed from $(G, \sigma)$ by deleting $v_{H^c}$ and adding a positive edge $v_H v$, is simple and clearly not isomorphic to $\widehat{\Gamma}_1$. It has the same frustration index as $(G, \sigma)$, but fewer vertices, contradicting the first part of~\Cref{thm:1ec}. If $v_{H^c}$ is in another leaf-block isomorphic to $\widehat{\Gamma}_2$, then $(G', \sigma')$ is isomorphic to $\widehat{\Gamma}_1$ and thus $(G, \sigma)$ is isomorphic to the signed graph in~\Cref{fig:leaf-leaf}. So we may assume to the contrary that $x$ and $y$ are two neighbors of $v_{H^c}$ in $G-H$ and $xy\not\in E(G)$. The new signed graph $(G', \sigma')$, formed from $(G, \sigma)$ by deleting $H$ and the vertex $v_{H^c}$, and adding an edge $xy$ with $\sigma'(xy)=\sigma(xv_{H^c})\sigma(yv_{H^c})$,  is simple, connected, and not isomorphic to $\widehat{\Gamma}_1$. We have that $F(G',\sigma')=F(G, \sigma)-F(H, \sigma|_H)>\frac{3v(G)+2}{8}-2=\frac{3(v(G')+6)+2}{8}-2>\frac{3v(G')+2}{8}$, a contradiction to the first part of~\Cref{thm:1ec}.

\medskip
Now we prove that each non-leaf-block of $(G, \sigma)$ is isomorphic to a negative triangle. Since the signed graph in~\Cref{fig:leaf-leaf} has no non-leaf-block, we can assume in the following that for each leaf-block $H$ of $(G, \sigma)$, $v_{H^c}$ is contained in a triangle. 
Assume to the contrary that $(G, \sigma)$ contains a non-leaf-block $Q$ that is not isomorphic to a negative triangle. Among all such signed graphs, we select one with the smallest number of vertices. Let $H$ be a leaf-block of $G$ and assume that $v_{H^c}$ is in a triangle $v_{H^c}xy$. 

We next show that each of $x$ and $y$ is of degree $3$. Assume not and at least one of them is a $2$-vertex, say $x$. Note that $y$ cannot be of degree $2$, as otherwise, $v_{H^c}xy$ is a leaf-block not isomorphic to $\widehat \Gamma_2$, a contradiction. Let $y'$ denote the third neighbor of $y$. We form a new signed graph $(G',\sigma')$ from $(G, \sigma)$ by deleting $H,v_{H^c},x,$ and $y$. Since $yy'$ is a cut-edge and $F(G[\{v_{H^c},x,y\}, \sigma])\leq 1$, we have that $F(G',\sigma')\geq F(G, \sigma)-3=\frac{3v(G)+2}{8}-3=\frac{3(v(G')+8)+2}{8}-3=\frac{3v(G')+2}{8}.$ Moreover, since $d_{G'}(y')\leq 2$, $(G',\sigma')$ is not isomorphic to $\widehat{\Gamma}_1$. Thus by the first part of~\Cref{thm:1ec}, $F(G',\sigma')=\frac{3v(G')+2}{8}$. It implies that $F(G[\{v_{H^c},x,y\}, \sigma])=1$, i.e., $v_{H^c}xy$ is a negative triangle. Moreover, since $yy'$ is a cut-edge, $G[V(Q)]=G'[V(Q)]$. If $G'[V(Q)]$ is a leaf-block, then $y'\in V(Q)$. As $d_{G'}(y')\leq 2$, $G'[V(Q)]$ is not isomorphic to $\widehat{\Gamma}_2$. Noting that $(G', \sigma')$ satisfies that  $F(G',\sigma')=\frac{3v(G')+2}{8}$, it is a contradiction to the previous claim. So $Q$ is still a non-leaf-block of $(G', \sigma')$ and thus $(G', \sigma')$ is a smaller counterexample, a contradiction.

Let $x'$ and $y'$ be the third neighbors of $x$ and $y$, respectively. We claim that $x' \neq y'$. Suppose to the contrary that $x' = y'$. Observe that $F(G[\{v_{H^c}, x, y, x'\}], \sigma) \leq 1$. If $x'$ is a $2$-vertex, then $F(G, \sigma)<\frac{3v(G) + 2}{8}$, a contradiction. Hence, we may assume that $x'$ is a $3$-vertex. Let $x''$ denote the other neighbor of $x'$. Let $(G'', \sigma'')$ be the signed graph obtained from $(G, \sigma)$ by deleting the leaf-block $H$ and the vertices $v_{H^c}$, $x$, $y$, and $x'$. Since $x'x''$ is a cut-edge, we have $F(G'', \sigma'') \geq F(G, \sigma) - 3$. Moreover, $(G'', \sigma'')$ is not isomorphic to $\widehat{\Gamma}_1$, for otherwise $(G, \sigma)$ would have $14$ vertices satisfying that $F(G, \sigma) \leq 5$, contradicting the assumption that $F(G, \sigma) = \frac{3v(G) + 2}{8}$.
Now, $F(G'', \sigma'') \geq F(G, \sigma) - 3 = \frac{3v(G) + 2}{8} - 3 = \frac{3(v(G'') + 9) + 2}{8} - 3 > \frac{3v(G'') + 2}{8},$
which contradicts the first part of~\Cref{thm:1ec}. 

\smallskip
We consider the following two cases based on whether $x'y'$ is an edge or not. From now on, we choose $\sigma$ to be the signature such that $|E^-_{(G,\sigma)}|=F(G,\sigma)$.

\smallskip
\noindent
\textbf{Case 1} $x'y'\in E(G)$. In this case, at least one of $x'$ and $y'$ is of degree $3$, as otherwise, $G[\{v_H, x,y,x',y'\}]$ is a leaf-block that is not isomorphic to $\widehat{\Gamma}_2$, a contradiction. Without loss of generality, we assume that $d_G(y')=3$. Let $x''$ be the third neighbor of $x'$ if it exists. We form $(G_1,\sigma_1)$ from $(G, \sigma)$ by deleting $H,v_{H^c},x,y$, and $x'$, and adding an edge $x''y'$ with $\sigma_1(x''y')=\sigma(x'x'')\sigma(x'y')$ if $x''$ exists. Since $d_{G_1}(y')\leq 2$, $(G_1,\sigma_1)$ cannot be isomorphic to $\widehat{\Gamma}_1$. Noting that in $(G,\sigma)$ there is at most one negative edge among the edges $v_{H^c}x,v_{H^c}y,xy,xx'$, and $yy'$, we have $$F(G_1,\sigma_1)\ge F(G, \sigma)-3=\frac{3v(G)+2}{8}-3=\frac{3(v(G_1)+9)+2}{8}-3>\frac{3v(G_1)+2}{8},$$ a contradiction to the first part of~\Cref{thm:1ec}. 

\smallskip
\noindent
\textbf{Case 2} $x'y'\notin E(G)$. We form $(G_2,\sigma_2)$ from $(G, \sigma)$ by deleting $H,v_{H^c},x$, and $y$, and adding an edge $x'y'$ with $\sigma_2(x'y')=\sigma(xx')\sigma(yy')$. Noting that $F(G[\{v_{H^c},x,y\}, \sigma])\leq 1$, we have $$F(G_2,\sigma_2)\ge F(G, \sigma)-3=\frac{3v(G)+2}{8}-3=\frac{3(v(G_2)+8)+2}{8}-3=\frac{3v(G_2)+2}{8}.$$ Now $(G_2,\sigma_2)$ is not isomorphic to $\widehat{\Gamma}_1$, as otherwise, $(G, \sigma)$ contains $12$ vertices satisfying that $F(G, \sigma)\leq 4$, a contradiction to the assumption of $F(G, \sigma)=\frac{3v(G)+2}{8}$. Hence, by the first part of~\Cref{thm:1ec}, $F(G_2,\sigma_2)=\frac{3v(G_2)+2}{8}$. So $v_{H^c}xy$ is a negative triangle. 

If $(G_2, \sigma_2)$ contains a non-leaf-block not isomorphic to a negative triangle, then $(G_2, \sigma_2)$ is a smaller counterexample, contradicting the minimality. So we assume that every non-leaf-blocks of $(G_2,\sigma_2)$ is isomorphic to a negative triangle. Since $Q\subset G$, $\{x',y'\}\subset V(Q)$ and thus $x'$ and $y'$ are in the same block, say $Q'$, of $(G_2, \sigma_2)$. By the minimality of $(G, \sigma)$, either $Q'$ is a leaf-block of $(G_2, \sigma_2)$, or $Q'$ is a negative triangle. 
\begin{itemize}
\setlength{\itemsep}{0.1em}
\item In the former case, since $F(G_2, \sigma_2)=\frac{3v(G_2)+2}{8}$, if $Q'$ is a leaf-block of $(G_2, \sigma_2)$, then $Q'=\widehat{\Gamma}_2$. Thus  $e_{Q'}$ is a cut-edge in $(G,\sigma)$, and neither $x'$ nor $y'$ is an endpoint of $e_{Q'}$. We denote two connected components of $G-e_{Q'}$ by $H_1$ and $H_2$, where $v_H\in H_1$. As $v_{Q'^c}$ is a $2$-vertex in $H_2$, $(H_2, \sigma|_{H_2})$ is not isomorphic to $\widehat \Gamma_1$. Noting that $H_1$ has $13$ vertices with $F(H_1,\sigma|_{H_1})\leq 4$, we have that {\small $$ F(H_2,\sigma|_{H_2})= F(G, \sigma)-F(H_1,\sigma|_{H_1})\ge\frac{3v(G)+2}{8}-4=\frac{3(v(H_2)+13)+2}{8}-4>\frac{3v(H_2)+2}{8},$$} a contradiction to the first part of~\Cref{thm:1ec}. 

\item In the latter case, if $Q'$ is a negative triangle, then the block $Q$ consists of $Q'$ and a 5-cycle that share the common edge $xy$. Note that $F(Q,\sigma)\leq 1$. Since $v_{H^c}xy$ forms a negative triangle, the 5-cycle cannot contain any negative edge other than $xy$; in particular, the edges $xx'$ and $yy'$ are both positive. Now, since $Q'$ is a negative triangle, the only edge in $(G_2, \sigma_2)$ that can be negative is $x'y'$. However, recalling that $\sigma_2(x'y')=\sigma(xx')\sigma(yy')$, it follows that one of $xx'$ or $yy'$ must be negative in $(G,\sigma)$, which leads to a contradiction.
\end{itemize}
Hence, we conclude that each non-leaf-block of $(G, \sigma)$ is isomorphic to a negative triangle.

\smallskip    
Finally, we show that $(G, \sigma)$ is cubic. Assume to the contrary that $x$ is a $2$-vertex of $G$. The above discussion implies that $x$ is in a non-leaf-block isomorphic to a negative triangle, denoted $xyz$. Note that if one of $y$ and $z$ is also $2$-vertex, then it is a leaf-block not isomorphic to $\widehat \Gamma_2$, a contradiction. So we let $y'$ and $z'$ be the third neighbor of $y$ and $z$, respectively. As $xyz$ is a block, $y'z'\not\in E(G)$. We form a new signed graph $(G''',\sigma''')$ from $(G, \sigma)$ by deleting $x,y$ and $z$, and adding a positive edge $y'z'$. We have that $$F(G''',\sigma''')= F(G, \sigma)-1=\frac{3v(G)+2}{8}-1=\frac{3(v(G''')+3)}{8}-1>\frac{3v(G''')+2}{8}.$$ Note that $(G''',\sigma''')$ is simple and connected. Moreover, $(G''',\sigma''')$ is not isomorphic to $\widehat \Gamma_1$, as otherwise, $(G,\sigma)$ contains $7$ vertices satisfying that $F(G, \sigma)=3$, a contradiction to the assumption that $F(G,\sigma)=\frac{3v(G)+2}{8}$. Now $(G''',\sigma''')$ is a smaller counterexample, contradicting the first part of~\Cref{thm:1ec}.
\end{proofofclaim}
We complete the proof of~\Cref{thm:1ec}.
\end{proof}

\section[Small]{Signed $2$-edge-connected simple subcubic graphs on at most $9$ vertices}\label{sec:9vertices}

In this section, we characterize $2$-edge-connected simple signed subcubic graphs $(G, \sigma)$ on at most $9$ vertices. 
Recall that it follows from~\Cref{lem:bad-cut} that for every signed subcubic graph $(G, \sigma)$, $F(G, \sigma) \leq \frac{1}{2}v(G)$. In the following, we first characterize all signed 2-edge-connected subcubic graphs for which this bound is attained; we emphasize that parallel edges are allowed in this result.

\begin{lemma}\label{lem:parallel}
Let $(G, \sigma)$ be a signed $2$-edge-connected subcubic graph such that $|E^-_{(G, \sigma)}|=F(G, \sigma)$. If $F(G,\sigma)=\frac{1}{2}v(G)$, then either $(G, \sigma)$ is isomorphic to $\widehat{\Gamma}_1$ up to switching, or each vertex of $(G, \sigma)$ is in a digon.
\end{lemma}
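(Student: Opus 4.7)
The plan is to use that $|E^-_{(G,\sigma)}|=F(G,\sigma)$ forces every cut $[S,S^c]$ to satisfy $|E^+\cap [S,S^c]|\ge|E^-\cap [S,S^c]|$ (otherwise switching at $S$ would strictly decrease $|E^-|$). Combined with $F=\frac{1}{2}v(G)$ and \Cref{lem:bad-cut}, this forces the negative edges to form a perfect matching $M$ with exactly one negative edge incident to each vertex. I will assume that some vertex $v$ is not in a digon and aim to conclude that $(G,\sigma)\cong\widehat{\Gamma}_1$ up to switching.

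The first step is to rule out $d(v)=2$: if $v$ has negative neighbor $u$ and positive neighbor $w\ne u$, then the cut $[\{v,w\}]$ contains $vu$ together with all remaining edges of $w$, one of which must be $w$'s unique matching edge $ww'$ (and $w'\ne v$ since $vw$ is positive). A direct count gives at least $2$ negatives against at most $1$ positive in this cut, violating the cut condition. Hence $d(v)=3$; writing $vu$ for the negative edge and $va,vb$ for the two positive edges (with $u\ne a,b$), an identical cut analysis on $[\{v,a\}]$ rules out $a=b$ and forces $d(a)=d(b)=3$. Denoting $a$'s matching edge by $aa'$ (with $a'\ne v$) and its remaining positive edge by $aa''$, the cut $[\{v,a\}]=\{vu,vb,aa',aa''\}$ is equilibrated with two negatives $\{vu,aa'\}$ and two positives $\{vb,aa''\}$.

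The crux is to switch at this equilibrated cut, producing a new signature $\sigma'$ with $|E^-_{\sigma'}|=F$, so in $\sigma'$ every vertex also carries at most one negative edge. Tracking the changes vertex-by-vertex, $v$ and $a$ each swap one negative for another and retain exactly one negative edge, but $b$ acquires the new negative $vb$ while keeping its original matching edge $bb'$ unless $bb'$ was itself flipped; this flip happens only when $b'\in\{v,a\}$, and since $vb$ is positive one is forced into $a'=b$ (so that $ab$ is the matching edge). The analogous observation applied to $a''$ yields a second negative at $a''$ unless its matching edge was flipped, which, after excluding $a''^*=a$ (since $a$ already has its unique matching edge $ab$), forces $a''^*=v$, that is, $a''=u$ and $au$ is a positive edge. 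A symmetric switching at $[\{v,b\}]$ gives $b''=u$ in the same way.

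Combining the three coincidences $a'=b$, $a''=u$, $b''=u$ shows that $\{v,u,a,b\}$ induces a $K_4$ with negatives $\{vu,ab\}$ and positives $\{va,vb,ua,ub\}$, which is precisely $\widehat{\Gamma}_1$. Since all four vertices have degree $3$ in this $K_4$ and $G$ is connected, no further vertex can be attached, and therefore $(G,\sigma)\cong\widehat{\Gamma}_1$ up to switching. The main obstacle I anticipate is the careful bookkeeping in the switching step: isolating precisely these three coincidences, and ruling out degenerate overlaps among the named vertices (such as $a=b$, $a''=a'$, or a parallel positive edge between $a$ and $v$) via small cut inequalities applied along the way.
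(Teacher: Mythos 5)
Your proposal is correct and takes essentially the same route as the paper's proof: both start from the fact that the negative edges form a perfect matching, fix a vertex $v$ not in a digon with neighbors $u$ (negative) and $a,b$ (positive), and force exactly the same coincidences ($ab$ negative, then $au$ and $bu$ positive), concluding that $\{v,u,a,b\}$ induces $\widehat{\Gamma}_1$ and, by connectivity, equals $G$. The only difference is in how each local step is certified --- the paper invokes the $\mathcal{T}$-free and $\mathcal{C}$-free condition of \Cref{lem:tree}, while you switch at the equilibrated cut $[v,a]$ and exhibit a vertex carrying two negative edges, which is an equivalent reformulation (a doubly negative vertex $b$ after that switching is precisely an unequilibrated cut $[v,a,b]$ in $\sigma$); the degenerate identifications you flag do all reduce to small unequilibrated cuts or to violations of $2$-edge-connectivity, as you anticipate.
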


\begin{proof}
Let $(G, \sigma)$ be a signed $2$-edge-connected subcubic graph such that $F(G,\sigma)=\frac{1}{2}v(G)$. Moreover, assume that $|E^-_{(G, \sigma)}|=F(G,\sigma)$. Now $E^-_{(G, \sigma)}=\{u_1v_1,\cdots,u_kv_k\}$ is a perfect matching of $G$. 
Note that every vertex $x$ is adjacent to exactly one negative edge of $E^-_{(G, \sigma)}$, and we denote the negative edge by $e_x$. Moreover, we use $x'$ and $x''$ to denote the other two positive neighbors of $x$. We first claim that $e_x,e_{x'}$, and $e_{x''}$ cannot be distinct. Otherwise, 
$G[\{x,x',x''\}]\in \mathcal{T}\cup \mathcal{C}$ is an induced subgraph of $G$ (noting that in the latter case $G[\{x,x',x''\}]$ is a positive triangle), a contradiction to~\Cref{lem:tree}. If $x'=x''$, then $G[\{x,x'\}]\in \mathcal{C}$ is an induced subgraph of $G$, again a contradiction to~\Cref{lem:tree}. Hence, either $x$ is in a digon (i.e., either $e_x=e_{x'}$ or $e_x=e_{x''}$), or there is a negative triangle in $G[\{x,x',x''\}]$ (i.e., $e_{x'}=e_{x''}$). 

We shall prove that if the latter case occurs ($e_{x'}=e_{x''}$), then $(G,\sigma)$ is isomorphic to $\widehat{\Gamma}_1$ up to switching. We denote the other positive neighbor of $x'$ and $x''$ by $y'$ and $y''$ respectively, and denote the negative neighbor of $x$ by $y$. As both $x$ and $y'$ are positive neighbors of $x'$, the negative edges $e_{x},e_{x'}$, and $e_{y'}$ are not distinct, and thus $e_{y'}\in \{e_{x},e_{x'}\}$, that is to say, $y'\in \{x,x'',y\}$. Note that $y'\ne x$, as otherwise $x$ is adjacent to three positive edges, a contradiction. If $y'=x''$, then between $x'$ and $x''$ there is a digon, and thus $xy$ is a cut-edge, a contradiction. So $y'=y$. Now as both $x$ and $y''$ are positive neighbors of $x''$, the negative edges $e_{x},e_{x''}$, and $e_{y''}$ are not distinct. Similarly as $y''\notin \{x',x\}$, $y''=y$. Now $\{x,x',x'',y\}$ forms a $K_4$, and as $xy$ and $x'x''$ are both negative, $(G, \sigma)$ is isomorphic to $\widehat{\Gamma}_1$.
\end{proof}

Note that for each $\widehat{\Gamma}_i$ in~\Cref{fig:Exception}, its signature is the one that achieves its frustration index. That is because all of them are $2$-edge-connected planar graphs, so each edge is contained in exactly two facial cycles. Thus the number of negative facial cycles should be at most twice of the frustration index. Moreover, by~\Cref{lem:parallel}, for signed simple subcubic graphs $(G, \sigma)$, either $F(G,\sigma)<\frac{1}{2}v(G)$ or $(G,\sigma)$ is isomorphic to $\widehat{\Gamma}_1$, so any signature $\pi$ on $\Gamma_i$ would imply that $F(\Gamma_i, \pi)\leq \frac{1}{3}v(\Gamma_i)$ for $i\in \{2,3,4,5\}$.

\medskip
Now we claim that all the other signed $2$-edge-connected simple subcubic graphs $(G, \sigma)$ on at most $9$ vertices satisfy $F(G, \sigma)\leq \frac{1}{3}v(G)$. We begin by considering cubic graphs, noting that such graphs must have an even number of vertices.

For $v(G)= 4$, the complete graph $K_4$ is the unique cubic graph on $4$ vertices. Here, $\frac{1}{3} v(K_4)<F(K_4, \sigma)\leq \frac{1}{2}v(K_4)$ holds if and only if the signature $\sigma$ satisfies that $|E^-_{(G, \sigma)}|=2$ and thus $E^-_{(G, \sigma)}$ forms a perfect matching, as shown in~\Cref{fig:K4}. 

For $v(G)=6$, by~\Cref{lem:parallel} we have $F(G,\sigma)<\frac{1}{2}v(G)=3$, i.e., $F(G,\sigma)\leq \frac{1}{3}v(G)=2$.

For $v(G)= 8$, it is proved in \cite{BCCS1977} that there are only five such cubic graphs. We have demonstrated that three of them (see~\Cref{fig:Gamma1,fig:Gamma2,fig:Cube}) admit signatures that serve as exceptional signed graphs. It remains to show that the remaining two graphs, $W_1$ and $W_2$, depicted in~\Cref{fig:v=8}, satisfy $F(W_i, \sigma) \leq \frac{1}{3} v(W_i)$ for any arbitrary signature $\sigma$.

\begin{lemma}
Let $W_1$ and $W_2$ be two graphs as depicted in~\Cref{fig:W1} and \Cref{fig:W2}. For each $i\in \{1,2\}$ and any signature $\sigma$, $F(W_i, \sigma) \leq \frac{1}{3} v(W_i)$.
\end{lemma}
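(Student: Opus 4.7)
The plan is to fix $i \in \{1,2\}$ and argue by contradiction. Assume some signature $\sigma$ satisfies $F(W_i, \sigma) \geq 3$, and replace $\sigma$ by a switching-equivalent representative with $|E^-_{(W_i, \sigma)}| = F(W_i, \sigma)$. Since $W_i$ is $3$-regular, Observation~\ref{lem:bad-cut} applied to the $3$-edge-cut at each vertex forces $E^-_{(W_i, \sigma)}$ to be a matching. Moreover, because $W_i$ is simple and not isomorphic to $\widehat{\Gamma}_1$, Lemma~\ref{lem:parallel} rules out $|E^-_{(W_i, \sigma)}| = \tfrac{1}{2}v(W_i) = 4$. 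Therefore $F(W_i, \sigma) = 3$ and $E^-_{(W_i, \sigma)}$ is a $3$-matching $M$ leaving exactly two vertices uncovered.

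The next step is to enumerate the $3$-matchings of $W_i$ up to $\mathrm{Aut}(W_i)$. Since each $W_i$ has only $12$ edges and a sizeable automorphism group, the number of orbits is small (just a handful in each case). For each orbit representative $M$, one designates $M$ as the negative edges and aims to derive a contradiction. Two ingredients are available: Observation~\ref{lem:bad-cut}, which forbids any unequilibrated edge-cut, and Lemma~\ref{lem:tree}, which forbids an induced tree $T \in \mathcal{T}$ or an induced odd cycle $C \in \mathcal{C}$, whose vertex boundary is sufficiently negative. For each $M$ it should suffice to exhibit a small witness: either a short induced tree (a path of length $2$ or a claw) whose vertex boundary contains two edges of $M$, or a short induced odd cycle (a triangle or pentagon) with a similar property, or an explicit $4$-edge-cut containing three edges of $M$.

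The main obstacle is the case analysis: one must confirm that every orbit of $3$-matchings in $W_1$ and in $W_2$ admits one of these witnesses. The key structural observation that keeps this manageable is that, with only $8$ vertices and girth at most $5$, any two edges of $M$ lie in a common induced subgraph of order at most $5$; furthermore, since $W_i$ is cubic, that subgraph always has enough boundary edges to invoke Lemma~\ref{lem:tree} once two of them are negative. Hence for each orbit one can locate a short path or cycle that is ``bad'' for $\sigma$. Completing the verification then reduces to a finite, direct inspection of the adjacency structure of $W_1$ and $W_2$, producing the desired contradiction and the bound $F(W_i, \sigma) \leq 2 < \tfrac{1}{3}v(W_i)$.
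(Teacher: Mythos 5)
Your opening reduction is exactly the paper's: pass to a signature with $|E^-_{(W_i,\sigma)}|=F(W_i,\sigma)$, use \Cref{lem:bad-cut} to make $E^-$ a matching, and invoke \Cref{lem:parallel} to rule out $F(W_i,\sigma)=4$, leaving a $3$-matching of negative edges. The problem is everything after that. The case analysis is not a routine afterthought here --- it \emph{is} the proof (the paper spends its entire argument on it, tracking the signs of individual edges of $W_1$ and $W_2$ and exhibiting, in each branch, a specific induced $3$-vertex path whose boundary cut is unequilibrated, or a $3$-edge-cut forcing all three negative edges into a $5$-vertex set that cannot hold a $3$-matching). You defer all of this to ``finite, direct inspection'' without performing any of it, so no contradiction is actually derived for any $3$-matching.

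More importantly, the structural claim you offer to guarantee the inspection will succeed is false, so the plan as stated would not close even if executed. Membership in $\mathcal{T}$ or $\mathcal{C}$ is not triggered by ``two edges of $M$'' on the boundary of an induced subgraph of order at most $5$: by the definitions, an induced tree on $2k+1$ vertices needs $k+2$ negative boundary edges (so a $3$-vertex path needs \emph{all three} negatives among its $5$ boundary edges, and a $5$-vertex tree would need four, which is impossible when $|E^-|=3$), and an induced $(2k+1)$-cycle needs $k+1$ (so a pentagon also needs all three). Only a triangle is witnessed by two negative boundary edges, and $W_1$ is triangle-free. Consequently the usable witnesses are quite constrained --- essentially induced $P_3$'s carrying all three negative edges on their boundary, plus ad hoc unequilibrated cuts --- and whether every orbit of $3$-matchings admits one is precisely the nontrivial verification the paper carries out and your proposal omits. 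As written, the argument has a genuine gap at its core.
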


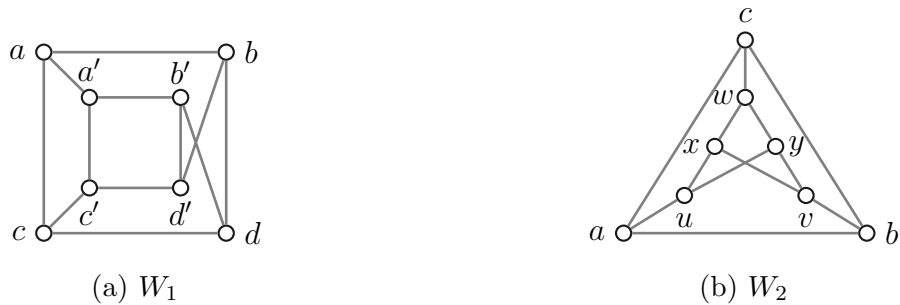
\begin{figure}[htbp]
\begin{subfigure}[t]{.48\textwidth}
		\centering
	     \begin{tikzpicture}[>=latex,
		roundnode/.style={circle, draw=black!90, thick, minimum size=2mm, inner sep=0pt},
        squarenode/.style={rectangle, draw=black!90, thick, minimum size=2mm, inner sep=0pt},
        scale=0.8
		]
          \node[roundnode] (a) at (0,0) {};
          \node[roundnode] (b) at (3,0) {};
          \node[roundnode] (c) at (0,-3) {};
          \node[roundnode] (d) at (3,-3) {};
          \node[roundnode] (a1) at (0.75,-0.75) {};
          \node[roundnode] (b1) at (2.25,-0.75) {};
          \node[roundnode] (c1) at (0.75,-2.25) {};
          \node[roundnode] (d1) at (2.25,-2.25) {};
        
          \draw[fill=white,line width=0.2pt] (0,0) node[left=1mm] {$a$};
          \draw[fill=white,line width=0.2pt] (3,0) node[right=1mm] {$b$};
          \draw[fill=white,line width=0.2pt] (0,-3) node[left=1mm] {$c$};
          \draw[fill=white,line width=0.2pt] (3,-3) node[right=1mm] {$d$};
          \draw[fill=white,line width=0.2pt] (0.75,-0.75) node[above=0.7mm] {$a'$};
          \draw[fill=white,line width=0.2pt] (2.25,-0.75) node[above=1pt] {$b'$};
          \draw[fill=white,line width=0.2pt] (0.75,-2.25) node[below=1pt] {$c'$};
          \draw[fill=white,line width=0.2pt] (2.25,-2.25) node[below=1pt] {$d'$};

          \draw[line width=1pt, gray] (a)--(b)--(d)--(c)--(a)--(a1)--(c1)--(c);
          \draw[line width=1pt, gray] (a1)--(b1)--(d1)--(c1);
          \draw[line width=1pt, gray] (b)--(d1);
          \draw[line width=1pt, gray] (d)--(b1);

	    \end{tikzpicture} 
        \caption{$W_1$}
		\label{fig:W1}
  \end{subfigure}
  \begin{subfigure}[t]{.48\textwidth}
  \centering
		\begin{tikzpicture}[>=latex,
		roundnode/.style={circle, draw=black!90, thick, minimum size=2mm, inner sep=0pt},
        squarenode/.style={rectangle, draw=black!90, thick, minimum size=2mm, inner sep=0pt},
        scale=0.8
		]
          \node[roundnode] (a) at (0,0) {};
          \node[roundnode] (b) at (4,0) {};
          \node[roundnode] (c) at (2,3.2) {};
          \node[roundnode] (u) at (1,0.625) {};
          \node[roundnode] (v) at (3,0.625) {};
          \node[roundnode] (w) at (2,2.25) {};
          \node[roundnode] (x) at (1.5,1.4375) {};
          \node[roundnode] (y) at (2.5,1.4375) {};

          \draw[fill=white,line width=0.2pt] (0,0) node[left=1mm] {$a$};
          \draw[fill=white,line width=0.2pt] (4,0) node[right=1mm] {$b$};
          \draw[fill=white,line width=0.2pt] (2,3.2) node[above=1mm] {$c$};
          \draw[fill=white,line width=0.2pt] (1,0.625) node[below=1mm] {$u$};
          \draw[fill=white,line width=0.2pt] (3,0.625) node[below=1mm] {$v$};
          \draw[fill=white,line width=0.2pt] (2,2.25) node[left=0mm] {$w$};
          \draw[fill=white,line width=0.2pt] (1.5,1.4375) node[left=0.5mm] {$x$};
          \draw[fill=white,line width=0.2pt] (2.5,1.4375) node[right=0.3mm] {$y$};

          \draw[line width=1pt, gray] (a)--(b)--(c)--(a)--(u)--(x)--(w)--(y)--(v)--(b);
          \draw[line width=1pt, gray] (u)--(y);
          \draw[line width=1pt, gray] (v)--(x);
          \draw[line width=1pt, gray] (c)--(w);
	    \end{tikzpicture} 
        \caption{$W_2$}
		\label{fig:W2}
    \end{subfigure}
    \caption{Two $8$-vertex cubic graphs $W_1$ and $W_2$}  
    \label{fig:v=8}
\end{figure}

\begin{proof}
Assume to the contrary that for each $i\in \{1,2\}$, there is a signature $\sigma_i$ such that $F(W_i, \sigma_i)> \frac{1}{3}v(W_i)$. Moreover, we may assume that $|E^-_{(W_i, \sigma_i)}|=F(W_i, \sigma_i)$. Since $W_1$ and $W_2$ has no parallel edges, and neither of them is isomorphic to $K_4$, \Cref{lem:parallel} implies that $F(W_i, \sigma_i)< \frac{1}{2}v(W_i)$, and thus $3\leq |E^-_{(W_i, \sigma_i)}|< 4$, i.e., $|E^-_{(W_i, \sigma_i)}|=3$. We consider two signed graphs case by case:

\smallskip
\noindent
{\bf Case 1}. For $(W_1, \sigma_1)$, we first claim that the cycle $aa'c'c$ cannot be all-positive under $\sigma_1$.

Suppose for contradiction that the cycle $aa'c'c$ is all-positive in $(W_1, \sigma_1)$. We consider two possibilities based on the sign of $bd$. 

Assume that $bd$ is negative under $\sigma_1$. In this case, by~\Cref{lem:bad-cut}, $ab,bd',b'd$, and $cd$ are all positive edges. We claim that $a'b'$ is positive. Assume not, and both $b'd$ and $b'd'$ are positive. As $|E^-_{(W_1, \sigma_1)}|=3$, $c'd'$ must be negative. However, it implies that $W_1[\{b',d',b\}]\in \mathcal{T}$ is an induced subgraph of $W_1$, a contradiction to~\Cref{lem:tree}. Since $a'b'$ is positive and $|E^-_{(W_1, \sigma_1)}|=3$, both $c'd'$ and $d'b'$ are negative, a contradiction to~\Cref{lem:bad-cut}. 

Assume that $bd$ is positive under $\sigma_1$. By symmetry, $b'd'$ is also a positive edge. Now consider the remaining six edges, by~\Cref{lem:bad-cut} there are only two possibilities under symmetry: either $E^-_{(W_1, \sigma_1)}=\{ab,b'd,c'd'\}$, or $E^-_{(W_1, \sigma_1)}=\{ab,b'd',cd\}$. But in the previous case, $W_1[\{b',d',b\}]\in \mathcal{T}$ is an induced subgraph of $W_1$ and in the latter case, $W_1[\{d',b',d\}]\in \mathcal{T}$ is an induced subgraph of $W_1$, both contradicting~\Cref{lem:tree}. 

Hence, we conclude that the cycle $aa'c'c$ cannot be all-positive under $\sigma_1$. By symmetry, we may assume that either $ac$ or $aa'$ is negative in $(W_1, \sigma_1)$. 
\begin{itemize}
\itemsep 0em
    \item  If $ac$ is negative, then by~\Cref{lem:bad-cut}, edges $ab,aa',cc',$ and $cd$ are all positive. We first claim that $a'c'$ is also positive. Assume not, as $|E^-_{(W_1, \sigma_1)}|=3$, there exists a negative edge $e$ that is adjacent to neither $ac$ nor $a'c'$, thus one of $\{b,b'\}$ is in $e$. Now either $W_1[\{a,a',b'\}]$ or $W_1[\{a',a,b\}]\in \mathcal{T}$ is an induced subgraph of $W_1$, a contradiction to~\Cref{lem:tree}. We then claim that $a'b'$ is also positive. Assume not, as $|E^-_{(W_1, \sigma_1)}|=3$, there exists a negative edge $e$ that is adjacent to neither $ac$ nor $a'b'$, thus one of $\{b,c'\}$ is in $e$. Now either $W_1[\{a,a',c'\}]\in \mathcal{T}$ or $W_1[\{b,a,a'\}]\in \mathcal{T}$ is an induced subgraph of $W_1$, a contradiction to~\Cref{lem:tree}. So $c'd'$ is positive by symmetry. Now we consider the remaining edges, which are precisely those in the $4$-cycle $bd'b'd$. As $|E^-_{(W_1, \sigma_1)}|=3$, either $\{bd,b'd'\}\subset E^-_{(W_1, \sigma_1)}$ or $\{bd',b'd\}\subset E^-_{(W_1, \sigma_1)}$, and thus either $W_1[\{a,b,d'\}]\in \mathcal{T}$ or $W_1[\{a,b,d\}]\in \mathcal{T}$ is an induced subgraph of $G$, a contradiction~\Cref{lem:tree}.
    
    \item If $aa'$ is negative, then by~\Cref{lem:bad-cut}, edges $ac,ab,a'c',$ and $a'b'$ are all positive. We claim that the edge-cut $[a,b]$ cannot contain two negative edges. Assume not, we may switch at $[a,b]$ and denote the resulting signature by $\sigma_1'$. Note that $|E^-_{(W_1, \sigma_1)}|=|E^-_{(W_1, \sigma_1')}|$. Now $ac$ is negative in $(W_1,\sigma_1')$, which has been discussed above. By symmetry, the edge-cut $[a',b']$ cannot contain two negative edges. Thus edges $bd,b'd',b'd,$ and $bd'$ are all positive. Among the remaining edges $\{cd,cc',c'd'\}$, as $|E^-_{(W_1, \sigma_1)}|=3$, $cd$ and $c'd'$ are negative, and thus $W_1[\{a',c',c\}]\in \mathcal{T}$ is an induced subgraph of $W_1$, a contradiction to~\Cref{lem:tree}.
\end{itemize}

\noindent
{\bf Case 2}. For $(W_2, \sigma_2)$, We first claim that the triangle $abc$ of $W_2$ cannot be all-positive.

Assume to the contrary that $abc$ is all-positive in $(W_2, \sigma_2)$. Since $\{au,bv,cw\}$ is an edge-cut, at most one of them can be negative under $\sigma_2$. If $au$ is negative, then by~\Cref{lem:bad-cut} $ux$ and $uy$ are positive. We consider edges in the cycle $xwyv$, as the other edges are positive except $ua$. As $|E^-_{(W_2, \sigma_2)}|=3$, either $\{xw,yv\}\subset E^-_{(W_2, \sigma_2)}$ or $\{xv,wy\}\subset E^-_{(W_2, \sigma_2)}$, and thus either $W_2[\{u,x,w\}]\in \mathcal{T}$ or $W_2[\{u,y,w\}]\in \mathcal{T}$ is an induced subgraph of $W_2$, a contradiction to~\Cref{lem:tree}. So $au$ is positive and by symmetry, $bv$ and $cw$ are both positive. But now the subgraph induced on $\{u,v,x,y,w\}$ is simple and has five vertices, and by~\Cref{lem:bad-cut} it contains at most $2$ negative edges. So it contradicts the fact that $|E^-_{(W_2, \sigma_2)}|=3$.

Hence, the triangle $abc$ of $W_2$ cannot be all-positive. Note that $x$ and $y$ are both adjacent to $u,v$, and $w$. By symmetry, we may assume that $ab$ is negative in $(W_2, \sigma_2)$. Now by~\Cref{lem:bad-cut}, edges $au,ac,bv,$ and $bc$ are all positive. We claim that $cw$ is also positive. Assume not, then by~\Cref{lem:bad-cut} edges $xw$ and $yw$ are all positive. As $|E^-_{(W_2, \sigma_2)}|=3$, there exists a negative edge $e$ such that either $u$ or $v$ is an endpoint of $e$, thus either $W_2[\{u,a,c\}]\in \mathcal{T}$ or $W_2[\{v,b,c\}]\in \mathcal{T}$ is an induced subgraph of $W_2$, a contradiction to~\Cref{lem:tree}. We then claim that $ux$ is positive. Assume not, by~\Cref{lem:bad-cut} $uy,xw,$ and $xv$ are all positive. As $|E^-_{(W_2, \sigma_2)}|=3$, either $wy$ or $vy$ is negative, and thus either $W_2[\{a,u,y\}]\in \mathcal{T}$ or $W_2[\{b,v,x\}]\in \mathcal{T}$ is an induced subgraph of $W_2$, a contradiction to~\Cref{lem:tree}. Now by symmetry, $uy,vx,$ and $vy$ are all positive. Since $|E^-_{(W_2, \sigma_2)}|=3$, $xw$ and $yw$ must be both negative, a contradiction to~\Cref{lem:bad-cut}.
\end{proof}

Next, we consider signed $2$-edge-connected simple  subcubic graphs containing $2$-vertices, where the total number of vertices may be odd (or even).

\begin{observation}\label{lem:subdivision}
Let $(G, \sigma)$ be a signed graph and let $(G', \sigma')$ be formed from $(G, \sigma)$ by subdividing some edges. Then $F(G',\sigma')= F(G,\sigma)$.
\end{observation}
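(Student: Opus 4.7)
The plan is to establish both inequalities $F(G',\sigma')\le F(G,\sigma)$ and $F(G,\sigma)\le F(G',\sigma')$. By induction on the number of subdivided edges, it suffices to treat a single subdivision: an edge $e=xy$ of $(G,\sigma)$ is replaced by a new vertex $z$ and two edges $xz,yz$. Here the subdivision rule defined earlier in the paper guarantees $\sigma'(xz)\sigma'(yz)=\sigma(e)$, so that the sign of each cycle of $G'$ equals the sign of the corresponding cycle of $G$. Throughout I will use the classical fact, implicit in the paper, that two signatures on the same underlying graph are switching-equivalent if and only if they induce the same sign on every cycle.

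For the direction $F(G',\sigma')\le F(G,\sigma)$, I would take a signature $\tau$ on $G$ switching-equivalent to $\sigma$ with $|E^-_{(G,\tau)}|=F(G,\sigma)$, and subdivide $(G,\tau)$ using the same rule (positive plus negative if $\tau(e)=-$, two positives if $\tau(e)=+$) to obtain a signature $\tau'$ on $G'$. By construction $|E^-_{(G',\tau')}|=|E^-_{(G,\tau)}|$, and since each cycle of $(G',\tau')$ has the same sign as the corresponding cycle of $(G',\sigma')$, the two signatures are switching-equivalent. Hence $F(G',\sigma')\le |E^-_{(G',\tau')}|=F(G,\sigma)$.

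For the reverse inequality $F(G,\sigma)\le F(G',\sigma')$, I would choose a signature $\tau'$ on $G'$ switching-equivalent to $\sigma'$ with $|E^-_{(G',\tau')}|=F(G',\sigma')$. Applying \Cref{lem:bad-cut} to the $2$-edge-cut $[\{z\},V(G')\setminus\{z\}]=\{xz,yz\}$ gives that at most one of $xz,yz$ is negative in $\tau'$. Contract the path $xzy$ back to the edge $e=xy$ and define a signature $\tau$ on $G$ by $\tau(e)=\tau'(xz)\tau'(yz)$, leaving all other signs unchanged. A short case check (zero or one negative edge at $z$) shows $|E^-_{(G,\tau)}|=|E^-_{(G',\tau')}|$. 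Because the cycle signs of $(G,\tau)$ coincide with those of $(G',\tau')$ on corresponding cycles, $(G,\tau)$ is switching-equivalent to $(G,\sigma)$, and therefore $F(G,\sigma)\le |E^-_{(G,\tau)}|=F(G',\sigma')$.

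The argument has no real obstacle; the one point to be careful about is the contraction step, where the bound $|E^-_{(G,\tau)}|=|E^-_{(G',\tau')}|$ requires precisely that $z$ is not incident to two negative edges. This is exactly what \Cref{lem:bad-cut} delivers, which is why minimality of $\tau'$ is used rather than picking an arbitrary switching-equivalent signature.
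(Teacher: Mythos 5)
Your proof is correct; the paper states this as an \Cref{lem:subdivision}-style observation with no proof given, and your argument (cycle-sign preservation under the paper's subdivision rule, plus Zaslavsky's characterization of switching equivalence via negative cycles, with \Cref{lem:bad-cut} handling the degree-$2$ vertex in the reverse direction) is exactly the standard justification the authors evidently had in mind. The only remark worth adding is that \Cref{lem:bad-cut} is not strictly needed: even if both edges at $z$ were negative, contracting would only decrease the number of negative edges, which still yields $F(G,\sigma)\le F(G',\sigma')$.
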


Observe that any signed tree has frustration index $0$. So we only consider signed subcubic graphs with at least one cycle.

If $v(G)=3$, then $G$ is isomorphic to $3$-cycle. It is easy to see that there are only two signed triangles $(C_3, +)$ and $(C_3, -)$ up to switching, where $F(C_3, +)=0$ and $F(C_3, -)=1$. Thus for any signature $\sigma$, $F(C_3, \sigma)\leq \frac{1}{3}v(C_3)$.

If $v(G)\in \{5, 7, 9\}$, then $G$ is a subcubic graph, and can be obtained from subdividing $k$ edges of a cubic graph $G'$, where $k$ is an odd number. For any signature $\sigma$ on $G$, let $\pi$ denote the corresponding signature on $G'$ that is induced by $\sigma$ before the subdivision is performed. If $k\ge 3$, then $v(G')\le v(G)-3\le 6$. As $F(G', \pi)\le \frac{1}{2}v(G')$ for any signature $\pi$ and by~\Cref{lem:subdivision}, we have $F(G, \sigma)= F(G',\pi)\le \frac{1}{2}v(G') \le \frac{1}{3}v(G)$. If $k=1$, then $(G', \pi)$ has at most one digon. As $v(G')\geq 4$, there exist some vertices that are not in any digon. So by~\Cref{lem:parallel}, either $(G', \pi)$ is isomorphic to $\widehat \Gamma_1$ or $F(G',\pi)<\frac{1}{2}v(G')$. Assume that there is a signature $\sigma$ such that $F(G,\sigma)>\frac{1}{3}v(G)$. By~\Cref{lem:subdivision}, if $v(G)=9$, then $F(G,\sigma)= F(G',\pi)<\frac{1}{2}v(G')=4$, a contradiction; if $v(G)=7$, then $F(G,\sigma)= F(G',\pi)<\frac{1}{2}v(G')=3$, a contradiction. So $v(G)=5$, and $F(G,\sigma)=F(G',\pi)=\frac{1}{2}v(G')=2$. This implies that $G$ is a subdivision of $\widehat{\Gamma}_1$, as shown in~\Cref{fig:W}.

For $v(G)\in \{4,6,8\}$, it can be formed from $G'$ where $v(G')\in \{3,5,7\}$ by subdividing some edges. By~\Cref{lem:subdivision}, $F(G,\sigma)= F(G',\pi)\le \frac{1}{3}v(G')< \frac{1}{3}v(G)$.

\section[KeyLemma]{Proof of~\Cref*{lem:key}}\label{sec:keylemma}

In this section, we shall first provide some reducible configurations for the minimum counterexample $(G, \sigma)$ to~\Cref{thm:main} and then prove~\Cref{lem:key}. Based on the discussion in~\Cref{sec:9vertices}, in the sequel, we may assume that $v(G)\geq 10$.

We first give some lemmas related to the exceptional signed graphs $\widehat{\Gamma}_i$. A signed graph $(H, \pi)$ is said to be \emph{critically $k$-frustrated} if $F(H, \pi)=k$ and, for any edge $e\in E(H)$, $F(H-e,\sigma|_{H-e})<k$. It follows from Lemma 4.3 of \cite{CS2022} that $\widehat{\Gamma}_1$ and $\widehat{\Gamma}_2$ are critically $2$-frustrated, and from Lemma 3.5 of \cite{CNSW2025} that $\widehat{\Gamma}_3, \widehat{\Gamma}_4$ and $\widehat{\Gamma}_5$ are critically $3$-frustrated. 

\begin{lemma}\label{lem:critical-equilibrated}{\rm \cite{CS2022}}
Let $k$ be a positive integer. A signed graph $(H, \pi)$ is critically $k$-frustrated if and only if every positive edge of $(H, \pi)$ is contained in an equilibrated cut.
\end{lemma}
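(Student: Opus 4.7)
My plan is to prove both directions of the biconditional by exploiting a single counting identity: switching at a cut $[X, X^c]$ changes $|E^-|$ by exactly $p - q$, where $p$ and $q$ count the positive and negative edges of the cut, respectively. Without loss of generality I would work with an optimal signature $\pi$, i.e., one satisfying $|E^-_{(H, \pi)}| = F(H, \pi) = k$, which is always achievable by an initial switching. The only delicate point I foresee is keeping track of how a single edge $e$ contributes to the two cuts (in $H$ and in $H - e$), which differ at most by whether $e$ crosses the bipartition; once that bookkeeping is fixed, both directions reduce to the same switching identity.

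For the forward direction, I would take a critically $k$-frustrated $(H, \pi)$ and any positive edge $e$. Since $F(H - e, \pi|_{H-e}) \leq k - 1$, there is a vertex bipartition $[X, X^c]$ such that switching $\pi|_{H-e}$ at this cut produces at most $k - 1$ negatives. Applying the same bipartition as a cut of $H$ and invoking $F(H, \pi) = k$ forces $p \geq q$ for the cut in $H$. I would then observe that if $e$ failed to cross $[X, X^c]$, the switching would yield identical negative counts in $H$ and in $H - e$, contradicting the strict drop; hence $e$ must cross. Writing the corresponding inequality on the $H - e$ side (where the cut has $p - 1$ positives and $q$ negatives) yields $p \leq q$, and combined with the previous bound this gives $p = q$, so $[X, X^c]$ is an equilibrated cut containing $e$.

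For the reverse direction, I would assume every positive edge of $(H, \pi)$ lies in some equilibrated cut, and verify that $F(H - e, \pi|_{H-e}) < k$ for every edge $e$. If $e$ is negative this is immediate, since $\pi|_{H-e}$ already has $k - 1$ negatives. If $e$ is positive, I would pick an equilibrated cut $[X, X^c]$ containing $e$; because $p = q$, switching $\pi$ at $[X, X^c]$ preserves the total negative count, producing $\pi'$ with $|E^-_{(H, \pi')}| = k$ in which $e$ is now negative. Deleting $e$ from $\pi'$ then leaves $k - 1$ negatives, yielding $F(H - e, \pi'|_{H-e}) \leq k - 1 < k$, as required. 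The symmetric role of the cut $[X, X^c]$ in the two directions (used once to force $p = q$, and once exploited because $p = q$) is the structural heart of the argument.
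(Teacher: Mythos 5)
The paper does not prove this lemma at all; it is quoted from \cite{CS2022}, so there is no in-paper argument to compare against. Your two switching-identity arguments are the standard (and correct) proofs of both directions \emph{provided} that $|E^-_{(H,\pi)}|=F(H,\pi)=k$ is available as a hypothesis. The genuine gap is your opening ``without loss of generality'': the right-hand side of the equivalence is a statement about the specific signature $\pi$ (which edges are positive, which cuts are equilibrated), and neither notion is switching-invariant, so you cannot normalize $\pi$ ``by an initial switching'' and then prove the statement for the normalized signature. Indeed, without the minimality hypothesis the lemma is false: take a triangle with exactly two negative edges. Its unique positive edge lies in an equilibrated cut (the star of either of its endpoints contains one positive and one negative edge), yet this signed triangle is balanced, $F=0$, so it is not critically $k$-frustrated for any positive integer $k$. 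The forward direction is equally sensitive to this point: your inequality $p\ge q$ comes from comparing $k+p-q$ with $F(H,\pi)$, and the first quantity is the post-switching negative count only when $|E^-_{(H,\pi)}|=k$.

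The fix is not a different argument but a different reading of the statement: in \cite{CS2022} the equivalence is asserted for signatures realizing the frustration index, i.e.\ $|E^-_{(H,\pi)}|=F(H,\pi)=k$ should be taken as a standing hypothesis rather than a normalization (and this is the only way the lemma is used here, e.g.\ in \Cref{col:equilibrated} for the minimally signed $\widehat{\Gamma}_i$). Once that hypothesis is in place, everything you wrote goes through: the bookkeeping distinguishing $p'=p-1,\ q'=q$ from $p'=p,\ q'=q$ is handled correctly, the forward direction correctly pins down $p=q$, and in the reverse direction you should just add the one-line remark that $\pi'|_{H-e}$ is switching equivalent to $\pi|_{H-e}$, so that $F(H-e,\pi|_{H-e})=F(H-e,\pi'|_{H-e})\le k-1$.
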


By~\Cref{lem:critical-equilibrated}, we immediately have the following result.

\begin{corollary}\label{col:equilibrated}
For $i\in \{1,2,3,4,5\}$, every positive edge of $\widehat{\Gamma}_i$ is contained in an equilibrated cut.
\end{corollary}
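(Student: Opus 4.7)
The plan is very short because the corollary is an immediate consequence of two facts that have already been assembled in the paragraph just above it. First I would recall the characterization in Lemma \ref{lem:critical-equilibrated}: a signed graph $(H,\pi)$ is critically $k$-frustrated precisely when every positive edge of $(H,\pi)$ lies in an equilibrated cut. This is the only structural ingredient needed, so the proof reduces to verifying that each $\widehat{\Gamma}_i$ satisfies the hypothesis of that lemma.

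Next I would invoke the two external results already cited in the excerpt: by Lemma 4.3 of \cite{CS2022}, both $\widehat{\Gamma}_1$ and $\widehat{\Gamma}_2$ are critically $2$-frustrated, and by Lemma 3.5 of \cite{CNSW2025}, the signed graphs $\widehat{\Gamma}_3,\widehat{\Gamma}_4,\widehat{\Gamma}_5$ are critically $3$-frustrated. Since the captions of \Cref{fig:Exception} already fix the values $F(\widehat{\Gamma}_1)=F(\widehat{\Gamma}_2)=2$ and $F(\widehat{\Gamma}_3)=F(\widehat{\Gamma}_4)=F(\widehat{\Gamma}_5)=3$, the critical $k$-frustration statements apply verbatim. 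Plugging this into Lemma \ref{lem:critical-equilibrated} yields, for each $i\in\{1,2,3,4,5\}$, that every positive edge of $\widehat{\Gamma}_i$ is contained in an equilibrated cut, which is precisely the claim of the corollary.

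There is really no obstacle here; the statement is cited as a direct ``\textit{By Lemma X, we immediately have}'' style corollary. If one preferred to be self-contained rather than quoting \cite{CS2022} and \cite{CNSW2025}, the only extra work would be a small case check on the five pictures in \Cref{fig:Exception}: for each $\widehat{\Gamma}_i$ and each edge $e\in E(\widehat{\Gamma}_i)$, exhibit a switching-equivalent signature on $\widehat{\Gamma}_i-e$ with strictly fewer than $F(\widehat{\Gamma}_i)$ negative edges. Because each $\widehat{\Gamma}_i$ has at most eight vertices and a very symmetric drawing, this check is mechanical and can be done up to the obvious automorphisms of each graph.
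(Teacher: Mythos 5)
Your proposal is correct and matches the paper exactly: the paper gives no separate proof, deriving the corollary immediately from \Cref{lem:critical-equilibrated} together with the cited facts that $\widehat{\Gamma}_1,\widehat{\Gamma}_2$ are critically $2$-frustrated and $\widehat{\Gamma}_3,\widehat{\Gamma}_4,\widehat{\Gamma}_5$ are critically $3$-frustrated. Nothing further is needed.
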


\subsection[Property]{Properties of the minimum counterexample $(G, \sigma)$ to \Cref*{thm:main}} 
\label{sec:properties}

\begin{lemma}\label{lem:cubic}
The minimum counterexample $(G,\sigma)$ is cubic.
\end{lemma}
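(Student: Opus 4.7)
The plan is to argue by contradiction: suppose some vertex $v\in V(G)$ has $\deg_G(v)=2$, and construct from $(G,\sigma)$ a strictly smaller signed graph that is still a counterexample to~\Cref{thm:main}, contradicting the minimality of $(G,\sigma)$. Let $u$ and $w$ denote the two neighbors of $v$. The argument splits according to whether $uw\in E(G)$.

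First I would handle the case $uw\notin E(G)$ by suppressing $v$: define $(G',\sigma')$ from $(G,\sigma)$ by deleting $v$ and introducing a new edge $uw$ with sign $\sigma'(uw)=\sigma(uv)\sigma(vw)$. Then $G'$ is simple (as $uw\notin E(G)$) and subcubic (the degrees of $u$ and $w$ are unchanged), and a routine cut-by-cut correspondence shows that smoothing a $2$-vertex in a $2$-edge-connected graph preserves $2$-edge-connectivity, so $G'$ is $2$-edge-connected. Since $G$ is obtained from $G'$ by subdividing the edge $uw$, \Cref{lem:subdivision} yields $F(G',\sigma')=F(G,\sigma)>\tfrac{1}{3}v(G)>\tfrac{1}{3}v(G')$. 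Finally, $v(G')=v(G)-1\ge 9>8\ge v(\widehat{\Gamma}_i)$ for every $i\in\{1,\ldots,5\}$, so $(G',\sigma')$ cannot be one of the exceptional signed graphs; hence it is a smaller counterexample, the desired contradiction.

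If instead $uw\in E(G)$, then $uvw$ is a triangle, and I begin by collecting structural consequences of $2$-edge-connectivity. Since $v$ has degree $2$, neither $u$ nor $w$ can have degree $2$ in $G$, for otherwise the unique edge leaving $\{u,v,w\}$ would be a cut-edge; hence both $u,w$ are $3$-vertices. Letting $u_1,w_1$ be their respective third neighbors, the same cut-edge reasoning applied to $\{u,v,w,u_1\}$ together with $v(G)\ge 10$ forces $u_1\ne w_1$. When $u_1w_1\notin E(G)$, I would contract the pendant triangle by defining $(G'',\sigma'')$ through deleting $u,v,w$ and inserting the edge $u_1w_1$ with sign $\sigma(uu_1)\sigma(uw)\sigma(ww_1)$. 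Lifting any edge-cut of $G''$ to an edge-cut of $G$ through the $2$-edge-cut $[\{u,v,w\}]=\{uu_1,ww_1\}$ shows that $G''$ is simple, subcubic and $2$-edge-connected. Applying~\Cref{lem:bad-cut} to the cuts $[v],[u],[w],[\{u,v,w\}]$ bounds the number of negative edges among $\{uv,vw,uw,uu_1,ww_1\}$ by~$2$, from which a short bookkeeping argument yields $F(G'',\sigma'')\ge F(G,\sigma)-1>\tfrac{1}{3}(v(G)-3)=\tfrac{1}{3}v(G'')$; the only delicate point is the borderline size $v(G'')=8$, where one must explicitly verify that the reduction cannot coincide with any~$\widehat{\Gamma}_i$.

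The main obstacle is the remaining subcase $u_1w_1\in E(G)$. Here the five vertices $\{u,v,w,u_1,w_1\}$ induce a triangle sharing the edge $uw$ with the $4$-cycle $uu_1w_1w$, and the same $2$-edge-connectivity argument forces $u_1$ and $w_1$ to be $3$-vertices with distinct further neighbors $u_1',w_1'$. The natural reduction is to delete the entire $5$-vertex block and reinsert a single edge $u_1'w_1'$, with sign chosen to preserve the parity of every cycle through the block, while treating separately the sub-subcase $u_1'w_1'\in E(G)$ via a further tailored local modification. Verifying simplicity, $2$-edge-connectivity, and a lower bound of the form $F(\text{reduced})\ge F(G,\sigma)-c$ with $c$ small enough to retain the strict inequality against $\tfrac{1}{3}v$, together with ruling out that the reduced graph happens to be isomorphic to some $\widehat{\Gamma}_i$ in the borderline sizes $v(G'')\in\{7,8\}$, constitutes the technical heart of the proof.
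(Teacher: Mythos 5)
Your first case ($uw\notin E(G)$, suppress the $2$-vertex) is exactly the paper's opening move and is fine. The trouble is in the triangle case, where you organize the analysis by whether the third neighbors $u_1,w_1$ are adjacent, while the paper splits on the \emph{sign} of the triangle $uvw$. That choice matters. In the paper's split, a positive triangle is handled by deleting only $v,w$ and rerouting ($F$ unchanged, $v$ drops by $2$), and a negative triangle by deleting $u,v,w$ ($F$ drops by exactly $1$, $v$ drops by $3$); both land cleanly above $\tfrac13 v$ of the reduced graph. Your subcase $u_1w_1\in E(G)$ --- which you explicitly leave as ``the technical heart'' --- is a genuine gap, and not a routine one: deleting the full $5$-vertex block removes a gadget spanned by two independent cycles plus a through-path, and for some sign patterns (e.g.\ triangle negative, through-path negative) that gadget genuinely carries $2$ negative edges under the optimal signature, so the naive bound $F(\text{reduced})\ge F(G,\sigma)-2$ only gives $>\tfrac13 v(\text{reduced})-\tfrac13$, which does not close. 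One can rescue it by tracking that whenever the gadget costs $2$ the reinserted edge $u_1'w_1'$ is forced to be negative, but none of that bookkeeping, nor the sub-subcase $u_1'w_1'\in E(G)$, nor the borderline isomorphism checks at $v(\text{reduced})\in\{5,8\}$, is carried out. The paper's case split avoids this configuration entirely.

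Two further points. First, your route to $F(G'',\sigma'')\ge F(G,\sigma)-1$ goes through bounding the negative edges among the deleted edges and ``bookkeeping''; but counting negative edges of the constructed signature only yields $|E^-_{(G'',\sigma'')}|$, which is an \emph{upper} bound on $F(G'',\sigma'')$. To get the lower bound you must either show the new signature admits no unequilibrated cut (the paper lifts every cut of the reduced graph back to a cut of $(G,\sigma)$ and invokes \Cref{lem:bad-cut}), or argue that every optimal signature of $G''$ extends to $G$ at the cost of at most one extra negative edge; as written your argument proves the wrong inequality. Second, ruling out that the reduced graph is one of $\widehat{\Gamma}_3,\widehat{\Gamma}_4,\widehat{\Gamma}_5$ when $v(G'')=8$ is not a formality you can defer: when an edge is added (so no $2$-vertex survives to give a degree obstruction), the paper needs \Cref{col:equilibrated} --- the added positive edge would lie in an equilibrated cut of $\widehat{\Gamma}_i$, which lifts to an unequilibrated cut of $(G,\sigma)$ --- and your reduction, whose new edge may be negative, does not obviously admit the same argument.
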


\begin{proof}
Since $(G, \sigma)$ is $2$-edge-connected, there is no $1$-vertex in $(G, \sigma)$.
Assume that $z$ is a $2$-vertex with two neighbors $x$ and $y$. We first claim that $xy\in E(G)$. Suppose for contradiction that $xy\notin E(G)$, we then form a signed graph $(G', \sigma')$ from $(G, \sigma)$ by deleting the vertex $z$ and adding an edge $xy$ such that $\sigma'(xy)= \sigma(xz)\sigma(zy)$. Note that $(G', \sigma')$ is $2$-edge-connected and simple (i.e., no parallel edges is created). Moreover, $|E^-_{(G', \sigma')}|=F(G', \sigma')$ and thus $$F(G', \sigma')= F(G, \sigma) > \frac{1}{3} v(G)= \frac{1}{3}(v(G')+1)>\frac{1}{3}v(G').$$ Since $v(G')=v(G)-1\geq 9$, $(G', \sigma')$ cannot be isomorphic to any of the five exceptional signed graphs $\widehat{\Gamma}_i$'s, and thus $(G', \sigma')$ is a smaller counterexample, contradicting the minimality of $(G, \sigma)$. So $xy\in E(G)$. Furthermore, since $(G, \sigma)$ is $2$-edge-connected, each of $x$ and $y$ is a $3$-vertex. Let $x'$ and $y'$ denote the third neighbor of $x$ and $y$, respectively. We then consider the following two cases.

\smallskip
\noindent
{\bf Case 1.} $xy\in E(G)$ and the triangle $xyz$ is positive. 
\smallskip

In this case, we form a signed graph $(G'',\sigma'')$ from $(G,\sigma)$ by deleting vertices $y$ and $z$, and adding an edge $xy'$ with  $\sigma''(xy')=\sigma(yy')$. Now no parallel edge is created and $G''$ is $2$-edge-connected. 

Since for any unequilibrated cut of $(G'', \sigma'')$ containing the edge $xy'$, replacing $xy'$ with $yy'$ results in an unequilibrated cut in $(G, \sigma)$, it follows that $(G'', \sigma'')$ contains no unequilibrated cut, and hence $F(G'',\sigma'')=|E^-_{(G'',\sigma'')}|$. Now we have that $$F(G'', \sigma'')= F(G, \sigma) > \frac{1}{3} v(G)= \frac{1}{3}(v(G'')+2)>\frac{1}{3}v(G'').$$ Note that $v(G'')=v(G)-2\geq 8$. If $(G'', \sigma'')$ is isomorphic to one of the five exceptional signed graphs, then 
$(G'', \sigma'')$ can only be one of $\widehat{\Gamma}_3, \widehat{\Gamma}_4$, and $\widehat{\Gamma}_5$, as shown in~\Cref{fig:Gamma1,fig:Gamma2,fig:Cube}. Observe that none of $\widehat{\Gamma}_3, \widehat{\Gamma}_4$, and $\widehat{\Gamma}_5$ has any $2$-vertex. However, $d_{G''}(x)=2$, which leads to a contradiction. Thus $(G'', \sigma'')$ is a smaller counterexample, contradicting the minimality of $(G, \sigma)$.

\smallskip
\noindent
{\bf Case 2.} $xy\in E(G)$ and the triangle $xyz$ is negative. 
\smallskip

In this case, without loss of generality, exactly one of $xy$ and $xz$ is negative. By~\Cref{lem:bad-cut}, both $xx'$ and $yy'$ are positive. We consider the following two subcases: If $x'y'\notin E(G)$, then let $(G_1,\sigma_1)$ be formed from $(G, \sigma)$ by deleting vertices $x,y$, and $z$, and adding a positive edge $x'y'$; If $x'y'\in E(G)$, then let $(G_2,\sigma_2)$ be formed from $(G, \sigma)$ by deleting vertices $x,y$, and $z$. Note that no parallel edge is created, and in each case the resulting signed graph $(G_i, \sigma_i)$ remains $2$-edge-connected. 

We next verify that $F(G_i, \sigma_i)=|E^-_{(G_i, \sigma_i)}|$ for $i\in \{1,2\}$. Assume not and it implies that $\sigma_i$ is not the signature that achieves the frustration index of $(G_i,\sigma_i)$. So $(G_i,\sigma_i)$ contains an unequilibrated cut $C_i$. If $C_i$ does not contain the edge $x'y'$, then itself is an unequilibrated cut in $(G,\sigma)$, a contradiction. Thus we assume that $x'y'\in C_i$ for each $i\in \{1,2\}$. Since exactly one of $xy$ and $xz$ is negative, for $i=1$, replacing $x'y'$ with $\{xy,xz\}$ results in an unequilibrated cut of $(G, \sigma)$; for $i=2$, adding $\{xy,xz\}$ to $C_2$ results in an unequilibrated cut of $(G, \sigma)$; both contradicting~\Cref{lem:bad-cut}. Thus we have that $$F(G_i, \sigma_i)= F(G, \sigma)-1 > \frac{1}{3} v(G)-1= \frac{1}{3}(v(G_i)+3)-1=\frac{1}{3}v(G_i).$$ 
 Now $v(G_1)=v(G_2)=v(G)-3\geq 7$. If $(G_i, \sigma_i)$ is not isomorphic to any of $\widehat{\Gamma}_i$'s, then it is a smaller counterexample, contradicting the minimality of $(G, \sigma)$. So we know that $v(G_i)=8$ and $(G_i, \sigma_i)$ can only be one of $\widehat{\Gamma}_3, \widehat{\Gamma}_4$, and $\widehat{\Gamma}_5$. Since $d_{G_2}(x')\le2$ but none of $\widehat{\Gamma}_3, \widehat{\Gamma}_4$, and $\widehat{\Gamma}_5$ has any vertex of degree less than $2$, we know that $i=1$. By~\Cref{col:equilibrated}, the newly-added positive edge $x'y'$ must be in an equilibrated cut of $(G_1,\sigma_1)$. Since exactly one of $xy$ and $xz$ is negative, and neither $xy$ nor $xz$ is in $(G_1, \sigma_1)$, replacing $x'y'$ with $\{xy,xz\}$ will imply an unequilibrated cut of $(G,\sigma)$, a contradiction~\Cref{lem:bad-cut}.
\end{proof}

\begin{lemma}\label{lem:33}
The minimum counterexample $(G, \sigma)$ contains no adjacent triangles. 
\end{lemma}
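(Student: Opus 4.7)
The plan is to assume the minimum counterexample $(G,\sigma)$ contains two triangles $T_1=xyz_1$ and $T_2=xyz_2$ sharing the edge $xy$, and to derive a smaller counterexample by contracting the ``diamond'' $D=\{x,y,z_1,z_2\}$. Since $G$ is cubic by~\Cref{lem:cubic}, $x$ and $y$ have all their neighbours in $D$, while $z_1,z_2$ each have a unique third neighbour, which I denote $z_1'$ and $z_2'$. A quick preliminary is that $z_1'\ne z_2'$: otherwise only the third edge at $z_1'$ would leave $D\cup\{z_1'\}$, contradicting $2$-edge-connectivity together with $v(G)\geq 10$.

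The core construction is the reduced signed graph $(G',\sigma')$ obtained from $(G,\sigma)$ by deleting $D$ and adding a new edge $z_1'z_2'$ whose sign is chosen according to the sign pattern of the diamond. Since $|E^-_{(G,\sigma)}|=F(G,\sigma)$, applying~\Cref{lem:bad-cut} to the cuts $[\{x\},\cdot]$, $[\{y\},\cdot]$, and $[\{x,y\},\cdot]$ bounds the number of negative edges inside $D$; by switching at a subset of $\{x,y\}$ (which keeps $\sigma$ on the edges $z_1z_1'$ and $z_2z_2'$ fixed) I can normalise the diamond so that it carries no negative edge when both triangles are balanced and exactly one negative edge when at least one of them is unbalanced. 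I then define $\sigma'(z_1'z_2')$ to be the sign of the shortest path from $z_1'$ to $z_2'$ through $D$ in the normalised signature, so that the sign of every cycle of $(G',\sigma')$ through $z_1'z_2'$ agrees with the sign of the corresponding cycle of $(G,\sigma)$.

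To complete the proof I would verify the following: (i) $(G',\sigma')$ is simple and $2$-edge-connected, handling separately the sub-case $z_1'z_2'\in E(G)$ by enlarging the deleted set (e.g.\ to include one of the further neighbours of $z_1'$ or $z_2'$); (ii) $|E^-_{(G',\sigma')}|=F(G',\sigma')$, which follows because any unequilibrated cut of $(G',\sigma')$ containing $z_1'z_2'$ pulls back, along the chosen path through $D$, to an unequilibrated cut in $(G,\sigma)$, contradicting $|E^-_{(G,\sigma)}|=F(G,\sigma)$; (iii) the deficit $F(G,\sigma)-F(G',\sigma')\leq 1$, so that $F(G',\sigma')>\tfrac{v(G)}{3}-1\geq\tfrac{v(G)-4}{3}=\tfrac{v(G')}{3}$; and (iv) $(G',\sigma')$ is not isomorphic to any of $\widehat{\Gamma}_1,\ldots,\widehat{\Gamma}_5$, which is straightforward once $v(G')\geq 6$ is combined with the very specific structure of the exceptional graphs.

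The main obstacle I anticipate is the case analysis when one or both of $T_1,T_2$ are unbalanced: in that regime the diamond carries an unavoidable negative edge whose frustration must be correctly absorbed either by the sign of the new edge $z_1'z_2'$ or by the deficit $F(G,\sigma)-F(G',\sigma')$, without creating an unequilibrated cut in $(G',\sigma')$. The auxiliary sub-case $z_1'z_2'\in E(G)$ is similarly delicate: the modified reduction removes more vertices, and one has to confirm both that the resulting signed graph still satisfies $F>\tfrac{1}{3}v$ and that it is not isomorphic to any of the five exceptional signed graphs.
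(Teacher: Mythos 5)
Your overall strategy (excise the diamond, reconnect, pull back unequilibrated cuts, count) is the same in spirit as the paper's, but your specific reduction differs in a way that creates genuine gaps. The paper keeps one apex of the diamond: with triangles $acd$, $bcd$ and third neighbours $x$ of $a$ and $y$ of $b$, it deletes only $\{b,c,d\}$ and adds $ay$ with the sign of $by$. This has two payoffs you lose by deleting all four vertices: the surviving vertex $a$ has degree $2$ in the reduced graph, which instantly rules out the cubic exceptional graphs $\widehat{\Gamma}_3,\widehat{\Gamma}_4,\widehat{\Gamma}_5$, and the new edge simply inherits the sign of an old edge, so no path-sign bookkeeping is needed.

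The first concrete gap is your normalization claim. In your notation, \Cref{lem:bad-cut} allows the diamond to carry the two non-adjacent negative edges $xz_1$ and $yz_2$ (both triangles unbalanced). Switching at $x$, at $y$, or at $\{x,y\}$ turns this pattern into one with $3$, $3$, or $2$ negative edges respectively --- never $1$. So ``exactly one negative edge when at least one triangle is unbalanced'' is unattainable by switching only inside $\{x,y\}$. The paper faces the same configuration ($E^-_{(H,\sigma)}=\{ac,bd\}$) and resolves it by switching at the \emph{equilibrated} cut $[a,d]$, which deliberately changes the sign of an external edge while preserving $|E^-_{(G,\cdot)}|=F(G,\sigma)$ and leaves only the shared edge $cd$ negative; your insistence on fixing the external edges is exactly what blocks this. (Your arithmetic can still be rescued in this case because both $z_1z_1'$ and $z_2z_2'$ are then forced positive and the new edge $z_1'z_2'$ comes out negative, so $|E^-|$ drops by only $1$; but that is not the argument you wrote.) The second gap is step (ii): when $z_1'z_2'$ is negative while $z_1z_1'$ and $z_2z_2'$ are positive, replacing $z_1'z_2'$ by one boundary edge of $D$ trades a negative edge for a positive one and need \emph{not} preserve unequilibratedness. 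One must instead distribute the four vertices of $D$ across the cut so that a negative edge of $D$ (e.g.\ $xz_1$ and $yz_2$ with $x,z_2$ on one side and $y,z_1$ on the other) enters the pulled-back cut --- a case analysis of the kind the paper carries out explicitly in \Cref{lem:34,lem:4435,lem:c3negative}. Relatedly, when exactly one triangle is unbalanced the two $z_1'$--$z_2'$ paths through $D$ have opposite signs, so your stated criterion ``every cycle through $z_1'z_2'$ agrees with the corresponding cycle of $(G,\sigma)$'' cannot be met and you must commit to one path. Finally, the sub-case $z_1'z_2'\in E(G)$ is only gestured at; note that there $[D\cup\{z_1',z_2'\}]$ is a $2$-edge-cut, which needs its own treatment.
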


\begin{proof}
Assume to the contrary that there are two adjacent triangles $acd$ and $bcd$ in $G$. Let $x$ and $y$ denote the third neighbor of $a$ and $b$ respectively. Note that $x\ne y$, as otherwise the edge incident to $x$ which is different from $\{ax,bx\}$ is a cut-edge, a contradiction. Let $(H, \sigma)$ denote the signed subgraph of $(G, \sigma)$ induced by the vertex set $\{a,b,c,d\}$. 

We claim that there exists a signature $\pi$ (of $G$) switching equivalent to $\sigma$ such that  $|E^-_{(G, \pi)}|=F(G, \sigma)$ and $|E^-_{(H, \pi|_H)}|\leq 1$. If $|E^-_{(H, \sigma)}|\leq 1$, then we let $\pi=\sigma$; if $|E^-_{(H, \sigma)}|>1$, then by~\Cref{lem:bad-cut} either $E^-_{(H, \sigma)}=\{ac, bd\}$ or $E^-_{(H, \sigma)}=\{ad, bc\}$. Without loss of generality, assume that $E^-_{(H, \sigma)}=\{ac, bd\}$. We form $(G,\pi)$ by switching at the equilibrated cut $[a,d]$. Now $E^-_{(H, \pi|_H)}=\{cd\}$, and as we form $\pi$ from $\sigma$ by switching at an equilibrated cut, $|E^-_{(G, \pi)}|=F(G, \sigma)$, thus the claim is proved.

Now we form a signed graph $(G',\pi')$ from $(G, \pi)$ by deleting vertices $b,c,$ and $d$, and adding an edge $ay$ with $\pi'(ay)=\pi(by)$. 
Note that the resulting signed graph $(G',\pi')$ is still $2$-edge-connected. Furthermore, for any unequilibrated cut of $(G', \pi')$ containing the edge $ay$, replacing it with $by$ results in an unequilibrated cut in $(G,\pi)$. Therefore, $(G', \pi')$ contains no unequilibrated cut, and hence $F(G',\pi')=|E^-_{(G', \pi')}|$. Since $|E^-_{(H, \pi|_H)}|\leq 1$, we have that $$F(G', \pi')\geq F(G, \pi)-1>\frac{1}{3}v(G)-1= \frac{1}{3}(v(G')+3)-1= \frac{1}{3}v(G').$$ 
Note that $v(G')=v(G)-3\geq 7$. If $(G', \pi')$ is not isomorphic to any of $\widehat{\Gamma}_i$'s, then it is a smaller counterexample, contradicting the minimality of $(G, \pi)$. So we know that $v(G')=8$ and 
$(G', \pi')$ can only be one of $\widehat{\Gamma}_3, \widehat{\Gamma}_4$, and $\widehat{\Gamma}_5$. However, none of these three signed graphs contain any $2$-vertex, contradicting the fact that $a$ is a $2$-vertex in $(G', \pi')$.
\end{proof}

\begin{lemma}\label{lem:34}
The minimum counterexample $(G, \sigma)$ contains no triangle adjacent to a $4$-cycle, where the triangle has a negative edge that is not the common edge shared with the $4$-cycle.
\end{lemma}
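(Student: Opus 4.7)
The plan is a reduction-by-deletion argument in the spirit of~\Cref{lem:33}. Assume the minimum counterexample $(G,\sigma)$ contains a triangle $T=abc$ sharing edge $ab$ with a $4$-cycle $abde$, and that $T$ has a negative edge other than $ab$; after possibly interchanging $a$ and $b$, we take $\sigma(ac)=-$. Let $c',d',e'$ denote the third neighbors of $c,d,e$. By~\Cref{lem:bad-cut} applied at $a$ and $c$ we obtain $\sigma(ab)=\sigma(bc)=\sigma(ae)=\sigma(cc')=+$, and by~\Cref{lem:33} the non-adjacencies $cd,ce,ad,be\notin E(G)$ hold; in particular $c'\notin\{a,b,c,d,e\}$. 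A first subtlety is to avoid the degenerate case in which any edge we might add between $\{c',d,e\}$ would already be present: $c'd\in E(G)$ forces $c'=d'$ (since $N(d)=\{b,e,d'\}$ and $c'\ne b,e$), and similarly $c'e\in E(G)$ forces $c'=e'$. If both hold then $N(c')=\{c,d,e\}$ and $\{a,b,c,d,e,c'\}$ induces a cubic subgraph on $6$ vertices exhausting $V(G)$, contradicting $v(G)\ge 10$ from~\Cref{sec:9vertices}. By the $(a\leftrightarrow b,\, d\leftrightarrow e)$-symmetry of the configuration, we may assume $c'd\notin E(G)$.

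The reduction is to let $(G',\sigma')$ be obtained from $(G,\sigma)$ by deleting $\{a,b,c\}$ and adding a new edge $c'd$ with sign $\sigma'(c'd):=\sigma(bd)$. Then $G'$ is simple (because $c'd\notin E(G)$), subcubic with $e$ remaining a $2$-vertex, and $v(G')=v(G)-3\ge 7$. Moreover, $G'$ is $2$-edge-connected: any cut-edge of $G'$ would, upon extending the partition by placing $\{a,b,c\}$ entirely on one side, produce a single-edge cut of $G$, contradicting its $2$-edge-connectivity. A direct count yields $|E^-_{(G',\sigma')}|=F(G,\sigma)-1$ uniformly: the removed edge $ac$ is always negative, $bd$ is removed and negative iff $\sigma(bd)=-$, all other removed edges are positive, and the added edge $c'd$ is negative iff $\sigma(bd)=-$, so the net change is always $-1$.

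The main obstacle is to upgrade this to the equality $F(G',\sigma')=F(G,\sigma)-1$; the nontrivial direction is $F(G',\sigma')\ge F(G,\sigma)-1$, which we establish via a switching-class lifting. Given any signature $\tau$ of $G'$ switching-equivalent to $\sigma'$ with $|E^-_\tau|=F(G',\sigma')$, write $\tau$ as $\sigma'$ switched at some cut $[S,S^c]$ of $G'$; we lift this cut to $G$ by assigning each of $a,b,c$ to $S$ or to $S^c$, and let $\tau'$ be $\sigma$ switched at the lifted cut. A case analysis across the $8$ possible placements of $\{a,b,c\}$ (exploiting that the triangle $abc$ is unbalanced and that our choice $\sigma'(c'd)=\sigma(bd)$ mirrors the sign product along the path from $c'$ via $c$ and $b$ to $d$ in $(G,\sigma)$) shows that at least one placement yields $|E^-_{\tau'}|=|E^-_\tau|+1$. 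Hence $F(G,\sigma)\le F(G',\sigma')+1$, and combined with the upper bound we obtain $F(G',\sigma')=F(G,\sigma)-1>v(G)/3-1=v(G')/3$. Since $v(G')\ge 7$ and $G'$ contains the $2$-vertex $e$, $(G',\sigma')$ is not isomorphic to any exceptional $\widehat{\Gamma}_i$, and hence is a smaller counterexample to~\Cref{thm:main}, contradicting the minimality of $(G,\sigma)$.
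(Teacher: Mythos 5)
Your reduction (delete the triangle $\{a,b,c\}$ and splice in the edge $c'd$) is genuinely different from the paper's, which instead first normalizes the signature on the whole five-vertex configuration by switching at equilibrated cuts and then deletes three vertices of the $4$-cycle, adding two edges. Your scheme is workable in the case you actually treat, but it has a real gap: the step ``by the $(a\leftrightarrow b,\,d\leftrightarrow e)$-symmetry we may assume $c'd\notin E(G)$'' is invalid. That symmetry was already spent when you normalized $\sigma(ac)=-$ (by \Cref{lem:bad-cut} exactly one of $ac,bc$ is negative, so the negative edge is a fixed edge of the configuration, not something you may relabel a second time); applying the swap again moves the negative edge to $bc$. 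Hence the case $c'=d'$ with $c'e\notin E(G)$ is simply not covered. There, adding $c'd$ creates a parallel edge, and the natural substitute --- adding $c'e$ --- breaks your bookkeeping: the contracted path $c'$--$c$--$a$--$e$ contains the negative edge $ac$, so $|E^-|$ drops by $1+[\sigma(bd)=-]$ rather than by exactly $1$. When $\sigma(bd)=-$ this only yields $F(G',\sigma')\ge F(G,\sigma)-2>\frac{1}{3}v(G)-2=\frac{1}{3}v(G')-1$, which does not contradict minimality. This subcase needs a genuinely different argument; the paper sidesteps the analogous difficulty by branching on which outer vertex is non-adjacent to the apex and tailoring the deletion/edge-addition (and the sign normalization) to each branch.

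Two further points. The heart of your proof --- that some placement of $\{a,b,c\}$ lifts every cut of $(G',\sigma')$ without decreasing its excess of negative over positive edges --- is asserted, not carried out; I checked it does hold for your configuration (e.g.\ when $c'$ and $d$ are separated and $\sigma(bd)=-$ one must place $b$ on $c'$'s side so that $bd$ re-enters the cut), but this verification is the entire content of the reduction and cannot be waved at. Also, your one-line justification of $2$-edge-connectivity of $G'$ fails when the putative cut-edge of $G'$ is $ed$: neither placement of $\{a,b,c\}$ then produces a $1$-edge-cut of $G$ (you get the $2$-edge-cut $\{ed,ae\}$ or a $3$-edge-cut). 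The claim is still true, but one must instead observe that the part of $G'$ hanging off $e'$ would be separated from $G$ by the single edge $ee'$.
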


\begin{figure}[htbp]
\begin{subfigure}[t]{.33\textwidth}
		\centering
	    \begin{tikzpicture}[>=latex,
		roundnode/.style={circle, draw=black!90, thick, minimum size=2mm, inner sep=0pt},
        squarenode/.style={rectangle, draw=black!90, thick, minimum size=2mm, inner sep=0pt},
        scale=0.7
		]
          \node[roundnode] (z) at (1,0.8) {};
          \node[roundnode] (c) at (0,1.6) {};
          \node[roundnode] (a) at (0,2.4) {};
          \node[roundnode] (x) at (0,3.2) {};
          \node[roundnode] (d) at (2,1.6) {};
          \node[roundnode] (b) at (2,2.4) {};
          \node[roundnode] (y) at (2,3.2) {};

          \draw[fill=white,line width=0.2pt] (1,0.8) node[right=1mm] {$z$};
          \draw[fill=white,line width=0.2pt] (0,1.6) node[left=1mm] {$c$};
          \draw[fill=white,line width=0.2pt] (0,2.4) node[left=1mm] {$a$};
          \draw[fill=white,line width=0.2pt] (0,3.2) node[left=1mm] {$x$};
          \draw[fill=white,line width=0.2pt] (2,1.6) node[right=1mm] {$d$};
          \draw[fill=white,line width=0.2pt] (2,2.4) node[right=1mm] {$b$};
          \draw[fill=white,line width=0.2pt] (2,3.2) node[right=1mm] {$y$};

          \draw[line width=1pt, gray] (c)--(a)--(x);
          \draw[line width=1pt, gray] (y)--(b)--(d);
          \draw[line width=1pt, gray] (a)--(b);
          \draw[line width=1pt, gray] (c)--(d);
          \draw[line width=1pt, blue]  (c)--(z);
          \draw[densely dotted, line width=1pt, red] (d)--(z);
	    \end{tikzpicture} 
        \caption{$(G,\sigma)$}  
        \label{fig:34f1}
  \end{subfigure}
  \begin{subfigure}[t]{.32\textwidth}
		\centering
	    \begin{tikzpicture}[>=latex,
		roundnode/.style={circle, draw=black!90, thick, minimum size=2mm, inner sep=0pt},
        squarenode/.style={rectangle, draw=black!90, thick, minimum size=2mm, inner sep=0pt},
        scale=0.7
		]

          \node[roundnode] (z) at (1,0.8) {};
          \node[roundnode] (c) at (0,1.6) {};
          \node[roundnode] (x) at (0,3.2) {};
          \node[roundnode] (y) at (2,3.2) {};

          \draw[fill=white,line width=0.2pt] (1,0.8) node[right=1mm] {$z$};
          \draw[fill=white,line width=0.2pt] (0,1.6) node[left=1mm] {$c$};
          \draw[fill=white,line width=0.2pt] (0,3.2) node[left=1mm] {$x$};
          \draw[fill=white,line width=0.2pt] (2,3.2) node[right=1mm] {$y$};

          \draw[line width=1pt, gray] (c)--(x);
          \draw[line width=1pt, gray] (z)--(y);
          \draw[line width=1pt, blue]  (c)--(z);
	    \end{tikzpicture} 
        \caption{$(G_1,\pi_1)$}  
        \label{fig:34f2}
        \end{subfigure}      
\begin{subfigure}[t]{.33\textwidth}
		\centering
	    \begin{tikzpicture}[>=latex,
		roundnode/.style={circle, draw=black!90, thick, minimum size=2mm, inner sep=0pt},
        squarenode/.style={rectangle, draw=black!90, thick, minimum size=2mm, inner sep=0pt},
        scale=0.7
		]
          \node[roundnode] (z) at (1,0.8) {};
          \node[roundnode] (x) at (0,3.2) {};
          \node[roundnode] (y) at (2,3.2) {};
          \node[roundnode] (u) at (2,1.6) {};

          \draw[fill=white,line width=0.2pt] (1,0.8) node[right=1mm] {$z$};
          \draw[fill=white,line width=0.2pt] (0,3.2) node[left=1mm] {$x$};
          \draw[fill=white,line width=0.2pt] (2,3.2) node[right=1mm] {$y$};
          \draw[fill=white,line width=0.2pt] (2,1.6) node[right=1mm] {$d$};

          \draw[line width=1pt, gray] (x)--(z);
          \draw[line width=1pt, gray] (y)--(u);
          \draw[line width=1pt, blue] (u)--(z);
	    \end{tikzpicture} 
        \caption{$(G_2,\pi_2)$}  
        \label{fig:34f4}
  \end{subfigure}

    \caption{Configurations in \Cref{lem:34}}  
    \label{fig:34}
\end{figure}  

\begin{proof}
    Assume not, and the underlying graph $G$ contains a triangle $cdz$ and a $4$-cycle $abdc$. Let $x$ and $y$ be the third neighbor of $a$ and $b$, respectively. Note that neither $x$ nor $y$ is coincide with $z$, as otherwise $by$ or $ax$ is a cut-edge, a contradiction. By symmetry we may assume that $dz$ is negative and thus $cz$ is positive. See~\Cref{fig:34f1}. Let $(H, \sigma)$ denote the signed subgraph of $(G, \sigma)$ induced by the vertex set $\{a,b,c,d,z\}$. 
    
    We claim that there exists a signature $\pi$ (of $G$) switching equivalent to $\sigma$ such that $|E^-_{(G, \pi)}|=F(G, \sigma)$ and $|E^-_{(H, \pi|_H)}|\leq 1$. If $|E^-_{(H, \sigma)}|\leq 1$, then we let $\pi=\sigma$; if $|E^-_{(H,\sigma)}|>1$, then by~\Cref{lem:bad-cut}, either $E^-_{(H,\sigma)}=\{ab,dz\}$ or $E^-_{(H,\sigma)}=\{ac,dz\}$. If $E^-_{(H,\sigma)}=\{ab,dz\}$, we form $(G,\pi)$ by switching at the equilibrated cut $[b,d]$; if $E^-_{(H,\sigma)}=\{ac,dz\}$, we form $(G,\pi)$ by switching at the equilibrated cut $[c,z]$. Now $E^-_{(H,\pi|_H)}=\{cd\}$, and as we form $\pi$ from $\sigma$ by switching at an equilibrated cut, $|E^-_{(G, \pi)}|=F(G, \sigma)$, thus the claim is proved.
    
    Since $z$ is a $3$-vertex, if both $x$ and $y$ are adjacent to $z$, then $x=y$. However, now we have that $v(G)=6$, contradicting the fact that $v(G)\geq 10$. So we consider the following two cases.
    
    \begin{itemize}
    \setlength{\itemsep}{0em}
        \item If $y$ is not adjacent to $z$, then let $(G_1,\pi_1)$ be formed from $(G,\pi)$ by deleting vertices $a,b$, and $d$, and adding edges $xc$ and $yz$ such that $\pi_1(xc)=\pi(xa)$ and $\pi_1(yz)=\pi(yb)$, as shown in~\Cref{fig:34f2};
        \item If $x$ is not adjacent to $z$, then let  $(G_2,\pi_2)$ be formed from $(G,\pi)$ by deleting vertices $a,b$, and $c$, adding edges $yd$ and $xz$ such that $\pi_2(yd)=\pi(yb)$ and $\pi_2(xz)=\pi(xa)$, and changing the sign of $dz$ to be positive under $\pi_2$, as shown in~\Cref{fig:34f4}.
    \end{itemize}
    It is easy to see that both $G_1$ and $G_2$ are $2$-edge-connected. 

    Next we shall prove that $|E^-_{(G_i, \pi_i)}|=F(G_i, \pi_i)$ for each $i\in \{1,2\}$. Assume to the contrary that $|E^-_{(G_i, \pi_i)}|>F(G_i, \pi_i)$. It implies that $(G_i,\pi_i)$ contains an unequilibrated cut. We claim that there exists an unequilibrated cut of $(G_1,\pi_1)$ not containing $cz$. For a cut containing $cz$, if it does not contain $cx$, then replacing $cz$ with $cx$ will result in an unequilibrated cut of $(G_1, \pi_1)$; if it contains $cx$, then deleting $cx$ and $cz$ will result in an unequilibrated cut $(G_1, \pi_1)$. Thus we shall choose the one not containing $cz$ and denote it by $C_1:=[A_1,B_1]$. 
    Now based on $C_1$, we consider the following cut $C_1'$ of $(G, \pi)$ as follows:
    \begin{equation*}
	C_1'=\left\lbrace  
	\begin{array}{ll}
    [A_1,B_1\cup\{a,b,d\}] &\text{ if } x\in A_1,\{y,z\}\subset B_1,\\
        
      [A_1,B_1\cup\{a,b,d\}] &\text{ if }\{x,y\}\subset A_1,z\in B_1,\\
        
		[A_1\cup\{a,b,d\},B_1] &\text{ if }\{x,z\}\subset A_1,y\in B_1,\\

          [A_1\cup\{a,b,d\},B_1]&\text{ if }\{x,y,z\}\subset A_1,
	\end{array}
	\right.
    \end{equation*}
where $C_1'=C_1\setminus \{cx\}\cup\{ax\}$ in the first case, $C_1'=C_1\setminus \{cx,zy\}\cup\{ax,by\}$ in the second case, $C_1'=C_1\setminus \{zy\}\cup\{by\}$ in the third case, and $C_1'=C_1$ in the last case.
Now $C_1'$ is an unequilibrated cut in $(G,\pi)$, a contradiction to~\Cref{lem:bad-cut}. By symmetry, there exists an unequilibrated cut $C_2$ of $(G_2,\pi_2)$ not containing $dz$, and based on $C_2$ we shall again find an unequilibrated cut in $(G, \pi)$. So $|E^-_{(G_i, \pi_i)}|=F(G_i, \pi_i)$ for each $i\in \{1,2\}$, and in both cases, as $|E^-_{(H, \pi|_H)}|\leq 1$, we have that $$F(G_i, \pi_i)\geq F(G, \pi)-1>\frac{1}{3}v(G)-1= \frac{1}{3}(v(G_i)+3)-1= \frac{1}{3}v(G_i).$$ Note that $v(G_i)=v(G)-3\geq 7$. If $(G_i, \pi_i)$ is not isomorphic to any of $\widehat{\Gamma}_i$'s, then it is a smaller counterexample, contradicting the minimality of $(G, \pi)$. So we know that $v(G_i)=8$ and $(G_i, \pi_i)$ can only be one of $\widehat{\Gamma}_3, \widehat{\Gamma}_4$, and $\widehat{\Gamma}_5$. However, $G_1$ has a $2$-vertex $c$ and $G_2$ has a $2$-vertex $d$, contradicting the fact that none of $\widehat{\Gamma}_3, \widehat{\Gamma}_4$, and $\widehat{\Gamma}_5$ has any $2$-vertex.
\end{proof}

\begin{lemma}\label{lem:4435}
The minimum counterexample $(G, \sigma)$ does not contain the following: 
\begin{enumerate}[label=(\arabic*)]
    \itemsep 0em
    \item\label{lem:44} Adjacent negative $4$-cycles sharing a common negative edge, such that the resulting $6$-cycle has exactly one chord;
    \item\label{lem:35} Adjacent negative $3$-cycle and $5$-cycle sharing a common negative edge, such that the resulting $6$-cycle has exactly one chord.
\end{enumerate}      
\end{lemma}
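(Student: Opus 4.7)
The plan is to extend the reduction strategy of Lemmas~\ref{lem:33} and~\ref{lem:34}: assuming the minimum counterexample $(G, \sigma)$ contains the forbidden configuration, I construct a strictly smaller $2$-edge-connected simple subcubic signed graph $(G^*, \sigma^*)$ with $F(G^*, \sigma^*) > v(G^*)/3$, which (together with the fact that $(G^*, \sigma^*)$ cannot be isomorphic to any exceptional $\widehat\Gamma_i$) contradicts minimality. Both parts concern a $6$-vertex, $7$-edge subgraph $H$ of $G$ with a distinguished common negative edge $uv$: the ``book'' $H = ua_1a_2v \cup ub_1b_2v$ in part~\ref{lem:44}, and the triangle--pentagon gadget $H = uvw \cup uvc_1c_2c_3$ in part~\ref{lem:35}. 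By Lemma~\ref{lem:cubic}, $u$ and $v$ are $3$-vertices whose neighbors all lie in $H$, while each of the remaining four vertices of $H$ has a unique external neighbor in $V(G)\setminus V(H)$; the no-chord hypothesis forbids any further incidences within or adjacent to $V(H)$.

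First, I use an equilibrated-switching argument, analogous to those in Lemmas~\ref{lem:33} and~\ref{lem:34}, to choose a signature $\pi$ switching-equivalent to $\sigma$ with $|E^-_{(G,\pi)}| = F(G,\sigma)$ and $E^-_{(H, \pi|_H)} = \{uv\}$. The no-adjacent-negatives consequence of Observation~\ref{lem:bad-cut} together with the requirement that both cycles of $H$ be negative leaves finitely many possibilities for the internal signature of $H$; in each such case a suitable $S \subseteq V(H)$ yields an equilibrated cut whose switching brings $H$ into the standard form. I then construct $(G^*, \sigma^*)$ by deleting three vertices of $H$ and adding two replacement edges between neighbors of the deleted vertices, whose signs equal the product of $\pi$-signs along the corresponding paths through $H$. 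For part~\ref{lem:44} I delete $\{u, v, a_1\}$ and add edges $a_2 b_1$ (encoding the path $a_2 v u b_1$) and $x_1 b_2$ (encoding $x_1 a_1 u v b_2$); for part~\ref{lem:35} I delete $\{u, v, c_2\}$ and add positive edges $wc_1$ and $wc_3$ (encoding $w v c_1$ and $w u c_3$). The no-chord hypothesis and the $2$-edge-connectivity of $G$ ensure that $(G^*, \sigma^*)$ is a simple $2$-edge-connected subcubic graph; degenerate cases, where external neighbors coincide or a replacement edge would duplicate an existing edge of $G$, are handled by swapping the choice of deleted vertex or the pairing of the added edges.

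Any unequilibrated cut of $(G^*, \sigma^*)$ lifts back to an unequilibrated cut of $(G, \pi)$ by replacing the new edges with their corresponding book paths, contradicting Observation~\ref{lem:bad-cut}; hence $|E^-_{(G^*, \sigma^*)}| = F(G^*, \sigma^*)$. A symmetric counting argument---extending any minimum signature on $G^*$ back to $G$ by prescribing $uv$ negative and all other book edges positive---then gives $F(G^*, \sigma^*) \geq F(G, \sigma) - 1$. Since $v(G^*) = v(G) - 3 \geq 7$, it follows that
\[
F(G^*, \sigma^*) \geq F(G, \sigma) - 1 > \frac{v(G)}{3} - 1 = \frac{v(G^*)+3}{3} - 1 = \frac{v(G^*)}{3}.
\]
Moreover, $(G^*, \sigma^*)$ contains a $2$-vertex (for instance $a_2$ in part~\ref{lem:44} or $w$ in part~\ref{lem:35}), which rules out the cubic exceptional graphs $\widehat\Gamma_3, \widehat\Gamma_4, \widehat\Gamma_5$, while the vertex count rules out $\widehat\Gamma_1$ and $\widehat\Gamma_2$. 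Hence $(G^*, \sigma^*)$ is a smaller counterexample, contradicting the minimality of $(G,\sigma)$. The main obstacle is the intricate case analysis underlying the normalization step and the verification that the chosen reduction produces a simple $2$-edge-connected graph with no unequilibrated cut and with the asserted drop in the frustration index; coinciding external neighbors or pre-existing adjacencies with $H$ each require an alternative deletion triple or edge pairing, and a short check confirms every such degenerate configuration is resolvable by one of a few standard variants.
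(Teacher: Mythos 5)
Your high-level strategy (reduce the configuration to a smaller $2$-edge-connected signed graph and lift unequilibrated cuts back) matches the paper's, but the specific reductions you propose break down, and the frustration-index accounting at the heart of the argument fails. First, note that the normalization you describe is automatic: since both cycles are negative, the shared edge $uv$ is negative, and \Cref{lem:bad-cut} forbids adjacent negative edges, every other edge of the book is already positive. Now take part~(1) in your labels. Your new edge $a_2b_1$ encodes the path $a_2vub_1$ and hence receives sign $(+)(-)(+)=-$, and $x_1b_2$ encodes $x_1a_1uvb_2$ and receives sign $-\pi(x_1a_1)$. Nothing forbids $\pi(x_1a_1)=+$, in which case both new edges are negative while only the single negative edge $uv$ is deleted, so $|E^-_{(G^*,\sigma^*)}|=F(G,\sigma)+1$ and $\sigma^*$ is certainly not minimal; concretely, if in addition $b_1$'s external edge is negative, the cut $[b_1,b_2]$ of $(G^*,\sigma^*)$ has three negative edges out of four, whereas the corresponding cut of $(G,\pi)$ has at most two of four --- so your claim that every unequilibrated cut of $(G^*,\sigma^*)$ lifts to one of $(G,\pi)$ is false. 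The ``symmetric counting'' lower bound $F(G^*,\sigma^*)\ge F(G,\sigma)-1$ also does not go through, because both replacement paths traverse the single edge $uv$: taking a minimum signature of $G^*$ in which $a_2b_1$, $x_1b_2$ and $b_1b_2$ are all positive, the four lifting constraints (two path products and two negative $4$-cycles) force at least two negative edges among the seven restored edges. Part~(2) has a further structural flaw: by \Cref{lem:cubic} the vertex $c_2$ opposite $uv$ on the pentagon is a $3$-vertex, so deleting it discards its external edge entirely; cycles of $G$ through that edge have no counterpart in $G^*$, the signature correspondence collapses, and if that edge is negative the frustration index drops by $2$, which your inequality --- already tight because you only lose three vertices --- cannot absorb.

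The paper's proof avoids all of this by deleting all six vertices of the configuration and replacing them with a two-vertex gadget joined by a \emph{positive} edge, each new vertex absorbing two of the four external attachments (with two available pairings to dodge coinciding neighbors, parallel edges, and connectivity degeneracies). That reduction removes exactly one negative edge and creates none, nets a loss of four vertices (so the final inequality $\frac{1}{3}(v(G')+4)-1>\frac{1}{3}v(G')$ has slack), and admits a genuinely sign-preserving cut-lifting map. Any repair of your three-vertex-deletion scheme would have to arrange that at most one of the two new edges inherits the negativity of $uv$, which is impossible when both replacement paths pass through $uv$; so the gap is not a routine degenerate-case check but a defect of the reduction itself.
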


\begin{figure}[htbp]
  \begin{subfigure}[t]{.25\textwidth}
		\centering
	    \begin{tikzpicture}[>=latex,
		roundnode/.style={circle, draw=black!90, thick, minimum size=2mm, inner sep=0pt},
        squarenode/.style={rectangle, draw=black!90, thick, minimum size=2mm, inner sep=0pt},
        scale=0.6
		]
          \node[roundnode] (a) at (0,0) {};
          \node[roundnode] (a') at (-1,0) {};
          \node[roundnode] (b) at (0,-1.5) {};
          \node[roundnode] (b') at (-1,-1.5) {};
          \node[roundnode] (x) at (1.5,0) {};
          \node[roundnode] (y) at (1.5,-1.5) {};
          \node[roundnode] (c) at (3,0) {};
          \node[roundnode] (c') at (4,0) {};
          \node[roundnode] (d) at (3,-1.5) {};
          \node[roundnode] (d') at (4,-1.5) {};

          \draw[fill=white,line width=0.2pt] (0,0) node[above=1mm] {$a$};
          \draw[fill=white,line width=0.2pt] (-1,0) node[above=1mm] {$a'$};
          \draw[fill=white,line width=0.2pt] (0,-1.5) node[below=1mm] {$b$};
          \draw[fill=white,line width=0.2pt] (-1,-1.5) node[below=1mm] {$b'$};
          \draw[fill=white,line width=0.2pt] (1.5,0) node[above=1mm] {$x$};
          \draw[fill=white,line width=0.2pt] (1.5,-1.5) node[below=1mm] {$y$};
          \draw[fill=white,line width=0.2pt] (3,0) node[above=1mm] {$c$};
          \draw[fill=white,line width=0.2pt] (4,0) node[above=1mm] {$c'$};
          \draw[fill=white,line width=0.2pt] (3,-1.5) node[below=1mm] {$d$};
          \draw[fill=white,line width=0.2pt] (4,-1.5) node[below=1mm] {$d'$};
          \draw[line width=1pt, blue] (a)--(x)--(c)--(d)--(y)--(b)--(a);
          \draw[line width=1pt, gray] (a)--(a');
          \draw[line width=1pt, gray] (b)--(b');
          \draw[line width=1pt, gray] (c)--(c');
          \draw[line width=1pt, gray] (d)--(d');
          \draw[densely dotted, line width=1pt, red] (x)--(y);
	    \end{tikzpicture} 
        \caption{Adjacent $4$-cycles}  
        \label{fig:C4-C4-switching}
  \end{subfigure}
  \begin{subfigure}[t]{.25\textwidth}
		\centering
	    \begin{tikzpicture}[>=latex,
		roundnode/.style={circle, draw=black!90, thick, minimum size=2mm, inner sep=0pt},
        squarenode/.style={rectangle, draw=black!90, thick, minimum size=2mm, inner sep=0pt},
        scale=0.6
		]
          \node[roundnode] (a) at (0,0) {};
          \node[roundnode] (a') at (-1,0) {};
          \node[roundnode] (b) at (0,-1.5) {};
          \node[roundnode] (b') at (-1,-1.5) {};
          \node[roundnode] (x) at (1.5,0) {};
          \node[roundnode] (y) at (3,-0.75) {};
          \node[roundnode] (c) at (3,0) {};
          \node[roundnode] (c') at (4,0) {};
          \node[roundnode] (d) at (3,-1.5) {};
          \node[roundnode] (d') at (4,-1.5) {};

          \draw[fill=white,line width=0.2pt] (0,0) node[above=1mm] {$a$};
          \draw[fill=white,line width=0.2pt] (-1,0) node[above=1mm] {$a'$};
          \draw[fill=white,line width=0.2pt] (0,-1.5) node[below=1mm] {$b$};
          \draw[fill=white,line width=0.2pt] (-1,-1.5) node[below=1mm] {$b'$};
          \draw[fill=white,line width=0.2pt] (1.5,0) node[above=1mm] {$x$};
          \draw[fill=white,line width=0.2pt] (3,-0.75) node[right=1mm] {$y$};
          \draw[fill=white,line width=0.2pt] (3,0) node[above=1mm] {$c$};
          \draw[fill=white,line width=0.2pt] (4,0) node[above=1mm] {$c'$};
          \draw[fill=white,line width=0.2pt] (3,-1.5) node[below=1mm] {$d$};
          \draw[fill=white,line width=0.2pt] (4,-1.5) node[below=1mm] {$d'$};
          \draw[line width=1pt, blue] (a)--(x)--(c)--(y)--(d)--(b)--(a);
          \draw[line width=1pt, gray] (a)--(a');
          \draw[line width=1pt, gray] (b)--(b');
          \draw[line width=1pt, gray] (c)--(c');
          \draw[line width=1pt, gray] (d)--(d');
          \draw[densely dotted, line width=1pt, red] (x)--(y);
	    \end{tikzpicture} 
        \caption{Adjacent $3$-cycle and $5$-cycle}  
        \label{fig:C3-C5-switching}
        \end{subfigure}
  \begin{subfigure}[t]{.24\textwidth}
  \centering
		\begin{tikzpicture}[>=latex,
		roundnode/.style={circle, draw=black!90, thick, minimum size=2mm, inner sep=0pt},
        squarenode/.style={rectangle, draw=black!90, thick, minimum size=2mm, inner sep=0pt},
        scale=0.6
		]
          %\node[roundnode] (a) at (0,0) {};
          \node[roundnode] (a') at (-1,0) {};
          %\node[roundnode] (b) at (0,-2) {};
          \node[roundnode] (b') at (-1,-1.5) {};
          \node[roundnode] (x) at (1,0) {};
          \node[roundnode] (y) at (1,-1.5) {};
          %\node[roundnode] (c) at (4,0) {};
          \node[roundnode] (c') at (3,0) {};
          %\node[roundnode] (d) at (4,-2) {};
          \node[roundnode] (d') at (3,-1.5) {};

          %\draw[fill=white,line width=0.2pt] (0,0) node[above=1mm] {$a$};
          \draw[fill=white,line width=0.2pt] (-1,0) node[above=1mm] {$a'$};
          %\draw[fill=white,line width=0.2pt] (0,-2) node[below=1mm] {$b$};
          \draw[fill=white,line width=0.2pt] (-1,-1.5) node[below=1mm] {$b'$};
          \draw[fill=white,line width=0.2pt] (1,0) node[above=1mm] {$u$};
          \draw[fill=white,line width=0.2pt] (1,-1.5) node[below=1mm] {$v$};
          %\draw[fill=white,line width=0.2pt] (4,0) node[above=1mm] {$c$};
          \draw[fill=white,line width=0.2pt] (3,0) node[above=1mm] {$c'$};
          %\draw[fill=white,line width=0.2pt] (4,-2) node[below=1mm] {$d$};
          \draw[fill=white,line width=0.2pt] (3,-1.5) node[below=1mm] {$d'$};
          
          \draw[line width=1pt, gray] (a')--(x)--(c');
          \draw[line width=1pt, gray] (b')--(y)--(d');
          \draw[line width=1pt, blue] (x)--(y);
	    \end{tikzpicture} 
        \caption{$(G_1, \sigma_1)$}
		\label{fig:C4-C4modified1}
    \end{subfigure}
      \begin{subfigure}[t]{.24\textwidth}
  \centering
		\begin{tikzpicture}[>=latex,
		roundnode/.style={circle, draw=black!90, thick, minimum size=2mm, inner sep=0pt},
        squarenode/.style={rectangle, draw=black!90, thick, minimum size=2mm, inner sep=0pt},
        scale=0.6
		]
          %\node[roundnode] (a) at (0,0) {};
          \node[roundnode] (a') at (-1,0) {};
          %\node[roundnode] (b) at (0,-2) {};
          \node[roundnode] (b') at (-1,-1.5) {};
          \node[roundnode] (u) at (0.5,-0.75) {};
          \node[roundnode] (v) at (2,-0.75) {};
          %\node[roundnode] (c) at (4,0) {};
          \node[roundnode] (c') at (3.5,0) {};
          %\node[roundnode] (d) at (4,-2) {};
          \node[roundnode] (d') at (3.5,-1.5) {};

          %\draw[fill=white,line width=0.2pt] (0,0) node[above=1mm] {$a$};
          \draw[fill=white,line width=0.2pt] (-1,0) node[above=1mm] {$a'$};
          %\draw[fill=white,line width=0.2pt] (0,-2) node[below=1mm] {$b$};
          \draw[fill=white,line width=0.2pt] (-1,-1.5) node[below=1mm] {$b'$};
          \draw[fill=white,line width=0.2pt] (0.5,-0.75) node[above=1mm] {$u$};
          \draw[fill=white,line width=0.2pt] (2,-0.75) node[above=1mm] {$v$};
          %\draw[fill=white,line width=0.2pt] (4,0) node[above=1mm] {$c$};
          \draw[fill=white,line width=0.2pt] (3.5,0) node[above=1mm] {$c'$};
          %\draw[fill=white,line width=0.2pt] (4,-2) node[below=1mm] {$d$};
          \draw[fill=white,line width=0.2pt] (3.5,-1.5) node[below=1mm] {$d'$};
          
          \draw[line width=1pt, gray] (a')--(u)--(b');
          \draw[line width=1pt, gray] (c')--(v)--(d');
          \draw[line width=1pt, blue] (u)--(v);
          % \draw[densely dotted, line width=1pt, red] (b')--(u);
          % \draw[densely dotted, line width=1pt, red] (c1)--(v);
	    \end{tikzpicture} 
        \caption{$(G_2, \sigma_2)$}
		\label{fig:C4-C4modified2}
    \end{subfigure}
    \caption{Configurations in \Cref{lem:4435}}  
    \label{fig:C4-C4-all}
\end{figure}

\begin{proof}
Assume to the contrary that $(G, \sigma)$ contains two negative $4$-cycles $abyx$ and $cdyx$ sharing one common negative edge $xy$, as shown in~\Cref{fig:C4-C4-switching}; or contains a $3$-cycle $cxy$ and a $5$-cycle $abdyx$ sharing one common negative edge $xy$, as shown in~\Cref{fig:C3-C5-switching}. For both cases, let $a',b',c',$ and $d'$ be the third neighbors of vertices $a,b,c,$ and $d$, respectively. Since the $6$-cycle $axcdyb$ or $axcydb$ has only one chord $xy$, $\{a,b\}\cap ( N(c)\cup N(d))=\emptyset$.
For both cases, we construct two signed graphs $(G_1, \sigma_1)$ and $(G_2, \sigma_2)$ as follows: delete vertices $\{a,b,c,d,x,y\}$ and replace the part connecting $a',b',c'$, and $d'$ by the two signed subgraphs depicted in~\Cref{fig:C4-C4modified1} and \Cref{fig:C4-C4modified2}. In particular, we define that $\sigma_1(a'u)=\sigma_2(a'u)=\sigma(a'a)$, $\sigma_1(b'v)=\sigma_2(b'u)=\sigma(b'b)$, $\sigma_1(c'u)=\sigma_2(c'v)=\sigma(c'c)$, and $\sigma_1(d'v)=\sigma_2(d'v)=\sigma(d'd)$. The resulting signed graphs are denoted by $(G_1, \sigma_1)$ and $(G_2, \sigma_2)$, respectively.

We first claim that at least one of $(G_1, \sigma_1)$ and $(G_2, \sigma_2)$ is $2$-edge-connected and has no parallel edges, and denote it by $(G', \sigma')$. We consider the vertex set $\{a',b',c',d'\}$. If three of them coincide, say $a'=b'=c'$, then $dd'$ is a cut-edge of $G$, a contradiction; If two of them coincide, then for cases that $a'=d'$ or $b'=c'$, both graphs are $2$-edge-connected with no parallel edges, and for other cases, by symmetry we only consider the cases that $a'=c'$ or $a'=b'$. For the case $a'=c'$, $(G_2,\sigma_2)$ is $2$-edge-connected with no parallel edges, and for the case $a'=b'$, $(G_1,\sigma_1)$ is $2$-edge-connected with no parallel edges. If all  $a',b',c',$ and $d'$ are distinct, then both $(G_1,\sigma_1)$ and $(G_2,\sigma_2)$ has no parallel edges, and as $G$ is $2$-edge-connected, at least one of them is $2$-edge-connected. Thus the claim is proved.

Next we claim that $F(G_i,\sigma_i)=F(G,\sigma)-1$ for each $i\in \{1,2\}$. Assume not and thus $(G',\sigma')$ contains an unequilibrated cut $C$. We form a new cut $C'$ of $(G, \sigma)$ based on the following edge replacement operations. 

(i) If $C$ contains $uv$, then: for the case of two adjacent negative $4$-cycles, we replace $uv$ by $\{ab,xy,cd\}$ when $i=1$ and replace $uv$ by $\{cx,xy,by\}$ when $i=2$; for the case of adjacent negative $3$-cycle and $5$-cycle, we replace $uv$ by $\{ab,xy,cy\}$ when $i=1$ and replace $uv$ by $\{cx,xy,bd\}$ when $i=2$.

(ii) If $C$ contains $wz$ for $i=1,2$, where $w\in\{a',b',c',d'\}$ and $z\in\{u,v\}$, then for both cases, we replace $wz$ by $ww_0$, where $w_0$ is the vertex that satisfies the following conditions: $w_0\in N(w)$, $w_0$ is the vertex which is contained in the $6$-cycle, and $\sigma(ww_0)=\sigma(wz)$; 

(iii) Otherwise, let $C'$=$C$.

Note that the conditions (i) and (ii) can occur simultaneously. If so, then we apply both operations.
It is easy to see that $C'$ is an unequilibrated cut in $(G,\sigma)$, a contradiction to~\Cref{lem:bad-cut}. Hence, $$F(G',\sigma')=F(G, \sigma)-1>\frac{1}{3}v(G)-1=\frac{1}{3}(v(G')+4)-1> \frac{1}{3}v(G').$$ 
Note that $v(G')=v(G)-4\geq 6$. If $(G', \sigma')$ is not isomorphic to any of $\widehat{\Gamma}_i$'s, then it is a smaller counterexample, contradicting the minimality of $(G, \sigma)$. So it must hold that $v(G')=8$ and 
$(G', \sigma')$ can only be one of $\widehat{\Gamma}_3, \widehat{\Gamma}_4$, and $\widehat{\Gamma}_5$. Note that as $v(G)=v(G')+4=12$, $(G,\sigma)$ is a signed graph on $12$ vertices with $4$ negative edges and thus $F(G, \sigma)=\frac{1}{3}v(G)$, a contradiction.
\end{proof}

\begin{lemma}\label{lem:2edgecut}
Assume that $\{v_1u_1, v_2u_2\}$ is a $2$-edge-cut of the minimum counterexample $(G, \sigma)$. Let $H$ be a $2$-edge-connected component of $G-\{v_1u_1, v_2u_2\}$, where $\{u_1,u_2\}\subset V(H)$. If both $v_1u_1$ and $v_2u_2$ are positive, and $v_1$ is not adjacent to $v_2$, then neither $v_1$ nor $v_2$ is incident with a negative edge. 
\end{lemma}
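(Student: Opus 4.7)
I would argue by contradiction, assuming without loss of generality that $v_1$ is incident with a negative edge. By \Cref{lem:cubic} and \Cref{lem:bad-cut}, $(G, \sigma)$ is cubic and $v_1$ has at most one negative edge; since $v_1u_1$ is positive, we may assume $\sigma(v_1a_1)=-$ and $\sigma(v_1b_1)=+$, where $a_1, b_1 \in V(G)\setminus V(H)\setminus\{v_2\}$ are the other two neighbors of $v_1$.

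The natural reduction is to \emph{contract} the $2$-edge-cut: form $(G^*, \sigma^*)$ by deleting every vertex of $V(H)$ from $G$ and adding a new positive edge $v_1v_2$. The hypothesis $v_1 \not\sim v_2$ ensures $G^*$ is simple; cubicity holds because the two deleted cut edges at $v_1, v_2$ are replaced by the single new edge; and $G^*$ is $2$-edge-connected because any cut-edge of $G^*$ would, after reattaching an $H$-path, produce a $1$-edge-cut of $G$, contradicting its $2$-edge-connectedness. The sign count satisfies
\[
|E^-(G^*, \sigma^*)| = |E^-(G, \sigma)| - |E^-(H, \sigma|_H)|,
\]
since both cut edges and $v_1v_2$ are positive.

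The heart of the proof is the identity $F(G^*, \sigma^*) = F(G, \sigma) - |E^-(H, \sigma|_H)|$. Given any switching $\tau^* = \sigma^* \oplus \delta_{S^*}$ of $\sigma^*$ at $S^* \subseteq V(G^*)$, one lifts $\tau^*$ to $G$ in two equivalent ways (switching at $S^*$ or at $V(G^*)\setminus S^*$); a short sign computation yields that the minimum of $|E^-(\tau_i)|$ over the two lifts equals $|E^-(\tau^*)| + |E^-(H,\sigma|_H)|$. Since both lifts are switching-equivalent to $\sigma$, each has $|E^-| \geq F(G, \sigma)$, giving $F(G^*, \sigma^*) \geq F(G, \sigma) - |E^-(H, \sigma|_H)|$; the reverse bound is immediate. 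To bound $|E^-(H, \sigma|_H)| \leq v(H)/3$, I would apply \Cref{thm:main} inductively to $H$: the subgraph $H$ is $2$-edge-connected simple subcubic, and since $u_1, u_2$ both have degree $2$ in $H$, it cannot be isomorphic to any $\widehat{\Gamma}_i$ (each of which is cubic or contains only one $2$-vertex). After pre-minimizing $|E^-(H, \sigma|_H)|$ by switchings in $V(H)\setminus\{u_1,u_2\}$ (which touch neither the cut edges nor the edges of $G[A]$) plus compensating compound switchings at $u_i, v_i$ when needed, we may take $|E^-(H, \sigma|_H)| = F(H, \sigma|_H) \leq v(H)/3$.

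Combining, $F(G^*, \sigma^*) > v(G)/3 - v(H)/3 = v(G^*)/3$. Since $v(G) \geq 10$ and $v(H) \geq 4$ (the latter from $u_1, u_2$ having internal degree $2$ in the $2$-edge-connected cubic $H$), $(G^*, \sigma^*)$ is smaller than $(G, \sigma)$; a direct inspection at $v_1$, which retains the negative edge $v_1a_1$ and acquires the new positive edge $v_1v_2$, excludes isomorphism with any $\widehat{\Gamma}_i$ in the residual small cases. Thus $(G^*, \sigma^*)$ is a smaller counterexample to \Cref{thm:main}, contradicting the minimality of $(G, \sigma)$. The main obstacle is the last step of the third paragraph: reconciling the constrained switchings on $H$ that preserve both cut edges (and hence $\sigma$'s optimality on $G$) with the full optimal switchings on $H$, in order to guarantee that $|E^-(H, \sigma|_H)|$ really can be driven down to $F(H, \sigma|_H) \leq v(H)/3$.
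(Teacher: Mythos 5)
Your reduction is the same as the paper's (delete $V(H)$, add a positive edge $v_1v_2$, and contradict minimality), but two steps that you leave vague are exactly where the real work lies, and as stated neither goes through. First, the identity $|E^-_{(H,\sigma|_H)}|=F(H,\sigma|_H)$: your plan of ``pre-minimizing by switchings in $V(H)\setminus\{u_1,u_2\}$ plus compensating compound switchings'' does not work, because an optimal switching of $H$ may separate $u_1$ from $u_2$, which flips exactly one cut edge; compensating at $v_1$ then flips the two other edges at $v_1$ and can increase the negative count outside $H$, so you cannot conclude that the globally optimal $\sigma$ restricts to an optimal signature on $H$. The paper proves the identity directly: if $(H,\sigma|_H)$ had an unequilibrated cut $[H_1,H_2]$ not separating $u_1$ from $u_2$, it would already be an unequilibrated cut of $(G,\sigma)$; if it separates them, say $u_1\in H_1$, then $[H_1\cup\{v_1\},\,V(G)\setminus(H_1\cup\{v_1\})]$ adds precisely the two edges at $v_1$ other than $v_1u_1$, \emph{exactly one of which is negative} because $v_1$ carries a negative edge, so the cut stays unequilibrated, contradicting \Cref{lem:bad-cut}. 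Note that this is the only place the hypothesis on $v_1$ enters; your proposal never uses it in a load-bearing way, which is a warning sign, since without that hypothesis the conclusion you would be deriving (that the configuration cannot occur at all) is false.

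Second, ``a direct inspection at $v_1$'' does not exclude the exceptional graphs. After switching, a vertex incident with one negative and two positive edges is a perfectly ordinary vertex of $\widehat{\Gamma}_1$, $\widehat{\Gamma}_3$, $\widehat{\Gamma}_4$, and $\widehat{\Gamma}_5$, so the local picture at $v_1$ rules out nothing but $\widehat{\Gamma}_2$ (which has a $2$-vertex while $G'$ is cubic). The paper needs a genuine case analysis here: since $G'-v_1v_2$ is a subgraph of $G$, isomorphism with $\widehat{\Gamma}_3$ forces adjacent triangles in $G$ (contradicting \Cref{lem:33}); $\widehat{\Gamma}_1$ and $\widehat{\Gamma}_5$ are handled via edge-transitivity, reducing to \Cref{lem:33} or \Cref{lem:4435}; and $\widehat{\Gamma}_4$ is handled via planarity together with \Cref{lem:34} and \Cref{lem:4435}. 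Without this analysis the contradiction at the end of your argument is not established.
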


\begin{figure}[htbp]
  \begin{subfigure}[t]{.48\textwidth}
		\centering
	    \begin{tikzpicture}[>=latex,
		roundnode/.style={circle, draw=black!90, thick, minimum size=2mm, inner sep=0pt},
        squarenode/.style={rectangle, draw=black!90, thick, minimum size=2mm, inner sep=0pt},
        scale=0.6
		]

          \node[roundnode] (u1) at (0,1) {};
          \node[roundnode] (u2) at (3,1) {};
          \node[roundnode] (v1) at (-2,1) {};
          \node[roundnode] (v2) at (5,1) {};
          \node[roundnode] (x) at (-3,2) {};
          \node[roundnode] (y) at (-3,0) {};
          
          \draw[fill=white,line width=0.2pt] (0,1) node[right=1mm] {$u_1$};
          \draw[fill=white,line width=0.2pt] (3,1) node[left=1mm] {$u_2$};
          \draw[fill=white,line width=0.2pt] (-2,1) node[above=1mm] {$v_1$};
          \draw[fill=white,line width=0.2pt] (5,1) node[above=1mm] {$v_2$};
          \draw[fill=white,line width=0.2pt] (-3,2) node[left=1mm] {$x$};
          \draw[fill=white,line width=0.2pt] (-3,0) node[left=1mm] {$y$};

          \draw[line width=1pt, gray] (0,0)--(u1)--(0,2)--(3,2)--(u2)--(3,0)--(0,0);
          \draw[line width=1pt, blue] (u1)--(v1)--(y);
          \draw[line width=1pt, blue] (u2)--(v2);
          \draw[densely dotted, line width=1pt, red] (x)--(v1);
	    \end{tikzpicture} 
        \caption{$(G,\sigma)$}  
        \label{fig:2edgecut1}
  \end{subfigure}
  \begin{subfigure}[t]{.48\textwidth}
		\centering
	    \begin{tikzpicture}[>=latex,
		roundnode/.style={circle, draw=black!90, thick, minimum size=2mm, inner sep=0pt},
        squarenode/.style={rectangle, draw=black!90, thick, minimum size=2mm, inner sep=0pt},
        scale=0.6
		]

          \node[roundnode] (v1) at (-2,1) {};
          \node[roundnode] (v2) at (3,1) {};
          \node[roundnode] (x) at (-3,2) {};
          \node[roundnode] (y) at (-3,0) {};

          \draw[fill=white,line width=0.2pt] (-2,1) node[above=1mm] {$v_1$};
          \draw[fill=white,line width=0.2pt] (3,1) node[above=1mm] {$v_2$};
          \draw[fill=white,line width=0.2pt] (-3,2) node[left=1mm] {$x$};
          \draw[fill=white,line width=0.2pt] (-3,0) node[left=1mm] {$y$};

          \draw[line width=1pt, blue] (v2)--(v1)--(y);
          \draw[densely dotted, line width=1pt, red] (x)--(v1);
	    \end{tikzpicture} 
        \caption{$(G',\sigma')$}  
        \label{fig:2edgecut2}
  \end{subfigure}
    \caption{Configurations in \Cref{lem:2edgecut}}  
    \label{fig:2edgecut}
\end{figure}  

\begin{proof}
Let $\{u_1v_1,u_2v_2\}:=[V(H),V(G)\setminus V(H)]$ be a $2$-edge-cut of $(G,\sigma)$ where $\{u_1,u_2\}\subset V(H)$. Without loss of generality, assume to the contrary that $v_1$ is incident to a negative edge $v_1x$, see~\Cref{fig:2edgecut1}. Note that $v_1 \ne v_2$, since otherwise $G$ would contain either a cut-edge that connects $v_1$, or a $2$-vertex that is $v_1$ itself, either of which leads to a contradiction. Note that $H$ is $2$-edge-connected and has no parallel edges.

We first claim that $|E^-_{(H, \sigma|_H)}|=F(H, \sigma|_H)$. Assume to the contrary that $|E^-_{(H, \sigma|_H)}|>F(H, \sigma|_H)$, so there exists an unequilibrated cut $C:=[H_1,H_2]$ where $V(H)=H_1\cup H_2$ in $(H, \sigma|_H)$. Note that $C$ must separate $u_1$ and $u_2$, as otherwise $C$ itself is an unequilibrated cut of $(G,\sigma)$, a contradiction. Without loss of generality, we assume that $u_1\in H_1$ and $u_2\in H_2$. Now since $v_1\notin H$, the cut $[H_1\cup\{v_1\},V(G)\setminus(H_1\cup\{v_1\})]$, which is indeed $C\cup\{xv_1,yv_1\}$, is an unequilibrated cut of $(G,\sigma)$, a contradiction to~\Cref{lem:bad-cut}. We next claim that $F(H, \sigma|_H)\le \frac{1}{3}v(H)$. Assume not and since 
$(H, \sigma|_H)$ is $2$-edge-connected, and has no parallel edges, by the minimality of the choice of $(G, \sigma)$, $(H, \sigma|_H)$ is isomorphic to one of $\widehat{\Gamma}_i$'s. But it is not possible since $u_1$ and $u_2$ are two $2$-vertices in $(H, \sigma|_H)$ but none of the exceptional graphs has two $2$-vertices.

Now we form a signed graph $(G',\sigma')$ from $(G, \sigma)$ as follows: delete all vertices of $H$, and add a positive edge $v_1v_2$.  Note that $(G',\sigma')$ is $2$-edge-connected and has no parallel edges. 
Furthermore, for any unequilibrated cut of $(G', \sigma')$ containing the edge $v_1v_2$, replacing it with $u_2v_2$ results in an unequilibrated cut in $(G,\sigma)$. Therefore, $(G', \sigma')$ contains no unequilibrated cut, and hence $F(G',\sigma')=|E^-_{(G', \sigma')}|$ where $|E^-_{(G',\sigma')}|=F(G, \sigma)-F(H, \sigma|_H)$. Since $F(H, \sigma|_H)\le \frac{1}{3}v(H)$, we have that $$F(G', \sigma')=F(G, \sigma)-F(H, \sigma|_H)>\frac{1}{3}v(G)-\frac{1}{3}v(H)=  \frac{1}{3}v(G').$$ 
If $(G', \sigma')$ is not isomorphic to any of $\widehat{\Gamma}_i$'s, then it is a smaller counterexample, contradicting the minimality of $(G, \sigma)$. So $(G', \sigma')$ must be one of $\widehat{\Gamma}_i$'s. 

We discuss these five signed graphs separately. Note that $(G',\sigma')-v_1v_2$ is also a subgraph of $(G,\sigma)$. If $(G',\sigma')$ is isomorphic to either $\widehat{\Gamma}_1$ or $\widehat{\Gamma}_5$, then as both $\widehat{\Gamma}_1$ and $\widehat{\Gamma}_5$ are edge-transitive, there exists $\sigma_1$ switching-equivalent to $\sigma$ with $|E^-_{(G,\sigma_1)}|=F(G,\sigma)$ satisfying the following conditions: (1) either $(G',\sigma_1)-v_1v_2$ contains a pair of adjacent triangles sharing a negative edge, which contradicts~\Cref{lem:33}, (2) or $(G',\sigma_1)-v_1v_2$ contains adjacent $4$-cycles where the common edge is negative, and the $6$-cycle they formed has only one chord, which contradicts~\Cref{lem:4435}\ref{lem:44}.
If $(G',\sigma')$ is isomorphic to $\widehat{\Gamma}_2$, then $G$ has a $2$-vertex, a contradiction to~\Cref{lem:cubic}. If $(G',\sigma')$ is isomorphic to $\widehat{\Gamma}_3$, then $(G',\sigma')-v_1v_2$ must contain two adjacent triangles, a contradiction to~\Cref{lem:33}. If $(G',\sigma')$ is isomorphic to $\widehat{\Gamma}_4$, then as $\widehat{\Gamma}_4$ is a signed planar graph and every facial cycle is negative, there exists $\sigma^*$ switching-equivalent to $\sigma'$ with $|E^-_{(G,\sigma^*)}|=F(G', \sigma')$ satisfying the following: (1) $(G', \sigma^*)-v_1v_2$ contains a triangle adjacent to a $4$-cycle, where one edge of the $3$-cycle that is not the common edge shared with the $4$-cycle is negative, which contradicts~\Cref{lem:34}; or (2) $(G',\sigma^*)-v_1v_2$ contains a triangle adjacent to a $5$-cycle, where the common shared edge is negative, and the $6$-cycle they formed has only one chord, which contradicts~\Cref{lem:4435}\ref{lem:35}.
\end{proof}

\begin{lemma}\label{lem:c3negative}
The minimum counterexample $(G, \sigma)$ contains no negative triangle. 
\end{lemma}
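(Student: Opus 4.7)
Suppose for contradiction that the minimum counterexample $(G,\sigma)$ contains a negative triangle $T$ with vertex set $\{x,y,z\}$. By \Cref{lem:cubic}, $G$ is cubic, so $x,y,z$ have unique third neighbors $x',y',z'$; by \Cref{lem:33}, $T$ has no adjacent triangle, hence $x',y',z'\notin\{x,y,z\}$ and are pairwise distinct. Since $|E^-_{(G,\sigma)}|=F(G,\sigma)$, \Cref{lem:bad-cut} ensures each vertex is incident to at most one negative edge, so exactly one triangle edge is negative. Without loss of generality $\sigma(xy)=-$, which forces $\sigma(xz)=\sigma(yz)=\sigma(xx')=\sigma(yy')=+$, while $\sigma(zz')$ remains unrestricted.

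My plan is to delete the triangle and derive a smaller counterexample. Set $(G_1,\sigma_1):=(G-\{x,y,z\},\sigma|_{G-\{x,y,z\}})$. Then $G_1$ is simple and subcubic with $x',y',z'$ of degree $2$, and $v(G_1)=v(G)-3\geq 7$. I would first verify that $G_1$ is connected: if it were disconnected, the pendants $\{x',y',z'\}$ would have to split across the components, and the only distribution compatible with the $2$-edge-connectedness of $G$ forces at least one pendant edge to become a bridge of $G$, a contradiction. A standard lift argument (placing $\{x,y,z\}$ on the appropriate side of any unequilibrated cut of $G_1$) would then show $F(G_1,\sigma_1)=|E^-_{(G_1,\sigma_1)}|=F(G,\sigma)-1$, hence
\[F(G_1,\sigma_1)=F(G,\sigma)-1>\tfrac{1}{3}v(G)-1=\tfrac{1}{3}v(G_1),\]
strictly. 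Since $v(G_1)$ is odd and $\geq 7$, it is not in $\{4,5,8\}$, so $(G_1,\sigma_1)$ is not isomorphic to any $\widehat{\Gamma}_i$. Consequently, if $(G_1,\sigma_1)$ is $2$-edge-connected, it is a smaller counterexample to \Cref{thm:main}, contradicting the minimality of $(G,\sigma)$.

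The remaining case is that $G_1$ has a bridge $e$. Removing $e$ splits $G_1$ into two components and the pendants $\{x',y',z'\}$ must split $2$--$1$ between them (a $3$--$0$ split would make $e$ a bridge of $G$). Let $a'\in\{x',y',z'\}$ be the lone pendant on its side and $a\in\{x,y,z\}$ the corresponding triangle vertex; then $\{e,aa'\}$ is a $2$-edge-cut of $G$. If $a\in\{x,y\}$, then $aa'$ is positive, and taking $H$ to be the $2$-edge-connected component on the $a'$-side (the non-adjacency hypothesis of \Cref{lem:2edgecut} is automatic since $a$'s non-triangle neighbor $a'$ lies inside $H$ while $a$'s other two neighbors are in $\{x,y,z\}$), \Cref{lem:2edgecut} applies whenever $e$ is also positive and forces $a$ to be incident to no negative edge, directly contradicting $\sigma(xy)=-$. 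The remaining sub-cases, either $a=z$ or $e$ negative, are handled either by switching at the now-equilibrated $2$-edge-cut (preserving $|E^-|=F$) to swap the signs of its two cut-edges and re-run the argument, or by restarting with a modified reduction $G_1':=G_1\cup\{pq\}$ for a suitable pair $\{p,q\}\subset\{x',y',z'\}$ with $pq\notin E(G)$ (which is guaranteed to exist, since the alternative that all three of $x'y',y'z',x'z'$ lie in $E(G)$ would embed the triangular prism into $G$, forcing $v(G)=6<10$) so that the resulting bridge analysis places the lone pendant among $\{x',y'\}$, returning to the favorable configuration. I expect the principal technical difficulty to lie precisely in these equilibrated-cut sub-cases, in which the sign freedom at $zz'$ must be carefully managed to ensure \Cref{lem:2edgecut} can be invoked in the configuration with both cut-edges positive.
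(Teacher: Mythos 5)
There is a genuine gap, and it sits at the heart of your reduction rather than only in the sub-cases you flag as delicate. Deleting all three triangle vertices destroys not only the negative edge $xy$ but also the pendant edge $zz'$, and nothing prevents $zz'$ from being negative: $z$ is incident to the two positive triangle edges $zx,zy$, so \Cref{lem:bad-cut} places no restriction on $\sigma(zz')$. When $\sigma(zz')=-$ you have $|E^-_{(G_1,\sigma_1)}|=F(G,\sigma)-2$, and even granting $F(G_1,\sigma_1)=|E^-_{(G_1,\sigma_1)}|$ the resulting bound is only $F(G_1,\sigma_1)>\frac{1}{3}v(G_1)-1$, which does not contradict \Cref{thm:main} for $G_1$. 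The ratio is exactly wrong: you remove three vertices but may remove two units of frustration, and $3\cdot\frac{1}{3}=1<2$. The same defect resurfaces even when $zz'$ is positive, inside your ``standard lift'': if an unequilibrated cut $[A,B]$ of $G_1$ has $x',y'\in A$ and $z'\in B$ with excess exactly one, then placing $\{x,y,z\}$ in $A$ adds the positive edge $zz'$ and the lifted cut is merely equilibrated --- no contradiction with \Cref{lem:bad-cut} --- while moving $z$ to $B$ instead adds the two positive edges $zx,zy$, which is worse. (When the lone minority pendant is $x'$ or $y'$ the lift can be rescued by moving that triangle vertex across, trading its pendant edge for $xy$ plus one positive triangle edge; that trade is unavailable at $z$ precisely because $z$ is opposite the negative edge.)

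The paper's proof is built around exactly this obstruction: it never deletes the free edge. Writing the triangle as $abc$ with $bc$ negative and $x$ the third neighbor of $a$, it deletes only $b$ and $c$, reattaches $a$ to the outside neighbors of $b$ and $c$ by positive edges, and only afterwards suppresses $a$ so that the edge $ax$ survives (as $xz$) with its original, possibly negative, sign. This guarantees that exactly one negative edge is lost against three vertices, giving the strict inequality you need; the configuration in which the relevant new edge lies in a $2$-edge-cut is then handled separately via \Cref{lem:2edgecut}. Your bridge analysis has the analogous blind spot: in the sub-case $a=z$ with both cut edges positive, \Cref{lem:2edgecut} concludes that $z$ is incident to no negative edge, which is perfectly consistent with the hypotheses and yields no contradiction, and the proposed ``modified reduction $G_1\cup\{pq\}$'' is not specified enough to see that it closes this case. (You would also need to verify that the side of the $2$-edge-cut playing the role of $H$ in \Cref{lem:2edgecut} is $2$-edge-connected, which a component of $G_1-e$ need not be.) Repairing the proof requires redesigning the reduction so that the sign of $zz'$ is carried into the smaller graph, which is essentially what the paper does.
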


\begin{figure}[htbp]
\begin{subfigure}[t]{.32\textwidth}
		\centering
	    \begin{tikzpicture}[>=latex,
		roundnode/.style={circle, draw=black!90, thick, minimum size=2mm, inner sep=0pt},
        squarenode/.style={rectangle, draw=black!90, thick, minimum size=2mm, inner sep=0pt},
        scale=0.7
		]
          \node[roundnode] (a) at (0.5,2) {};
          \node[roundnode] (b) at (0,1) {};
          \node[roundnode] (c) at (1,1) {};
          \node[roundnode] (x) at (0.5,3) {};
          \node[roundnode] (y) at (-1,1) {};
          \node[roundnode] (z) at (2,1) {};

          \draw[fill=white,line width=0.2pt] (0.5,2) node[left=1mm] {$a$};
          \draw[fill=white,line width=0.2pt] (0,1) node[below=0.5mm] {$b$};
          \draw[fill=white,line width=0.2pt] (1,1) node[below=1.2mm] {$c$};
          \draw[fill=white,line width=0.2pt] (0.5,3) node[above=1mm] {$x$};
          \draw[fill=white,line width=0.2pt] (-1,1) node[left=1mm] {$y$};
          \draw[fill=white,line width=0.2pt] (2,1) node[right=0.53mm] {$z$};

          \draw[line width=1pt, blue] (a)--(b)--(y);
          \draw[line width=1pt, blue] (a)--(c)--(z);
          \draw[line width=1pt, gray] (x)--(a);
          \draw[line width=1pt, blue] (c)--(z);
          \draw[densely dotted, line width=1pt, red] (b)--(c);
	    \end{tikzpicture} 
        \caption{$(G,\sigma)$}  
        \label{fig:c3f1}
  \end{subfigure}
  \begin{subfigure}[t]{.33\textwidth}
		\centering
	    \begin{tikzpicture}[>=latex,
		roundnode/.style={circle, draw=black!90, thick, minimum size=2mm, inner sep=0pt},
        squarenode/.style={rectangle, draw=black!90, thick, minimum size=2mm, inner sep=0pt},
        scale=0.7
		]
          \node[roundnode] (a) at (0.5,2) {};
          \node[roundnode] (x) at (0.5,3) {};
          \node[roundnode] (y) at (-1,1) {};
          \node[roundnode] (z) at (2,1) {};

          \draw[fill=white,line width=0.2pt] (0.5,2) node[left=1mm] {$a$};
          \draw[fill=white,line width=0.2pt] (0.5,3) node[above=1mm] {$x$};
          \draw[fill=white,line width=0.2pt] (-1,1) node[left=1mm] {$y$};
          \draw[fill=white,line width=0.2pt] (2,1) node[right=0.5mm] {$z$};

          \draw[line width=1pt, gray] (x)--(a);
          \draw[line width=1pt, blue] (a)--(y);
          \draw[line width=1pt, blue] (a)--(z);
          %fake points
          \draw[white,line width=0.2pt] (0,1) node[white, below=0.5mm] {$b$};
          \draw[white,line width=0.2pt] (1,1) node[white, below=1.2mm] {$c$};
	    \end{tikzpicture} 
        \caption{$(G',\sigma')$}  
        \label{fig:c3f2}
  \end{subfigure}
    \begin{subfigure}[t]{.33\textwidth}
		\centering
	    \begin{tikzpicture}[>=latex,
		roundnode/.style={circle, draw=black!90, thick, minimum size=2mm, inner sep=0pt},
        squarenode/.style={rectangle, draw=black!90, thick, minimum size=2mm, inner sep=0pt},
        scale=0.7
		]
          \node[roundnode] (x) at (0.5,3) {};
          \node[roundnode] (y) at (-1,1) {};
          \node[roundnode] (z) at (2,1) {};
          \draw[fill=white,line width=0.2pt] (0.5,3) node[above=1mm] {$x$};
          \draw[fill=white,line width=0.2pt] (-1,1) node[left=0.5mm] {$y$};
          \draw[fill=white,line width=0.2pt] (2,1) node[right=0.53mm] {$z$};
          \draw[line width=1pt, gray] (x)--(z);

          %fake points
          \draw[white,line width=0.2pt] (0,1) node[white, below=0.5mm] {$b$};
          \draw[white,line width=0.2pt] (1,1) node[white, below=1.2mm] {$c$};
	    \end{tikzpicture} 
        \caption{$(G'',\sigma'')$}  
        \label{fig:c3f4}
  \end{subfigure}
    \caption{Configurations in \Cref{lem:c3negative}}  
    \label{fig:c3negative}
\end{figure}

\begin{proof}
    Assume to the contrary that $(G, \sigma)$ contains a negative triangle $abc$ where $bc$ is negative. Let $x,y,$ and $z$ be the third neighbors of $a,b,$ and $c$, respectively. By~\Cref{lem:33} we know that vertices $x,y,z$ are distinct, and furthermore, by~\Cref{lem:34} $x$ is not the neighbor of $y$ or $z$, as shown in~\Cref{fig:c3f1}. We form a signed graph $(G',\sigma')$ from $(G,\sigma)$ by deleting vertices $b$ and $c$, and adding two positive edges $ay$ and $az$, as shown in~\Cref{fig:c3f2}. Note that $(G',\sigma')$ is also $2$-edge-connected. Now we consider the following two cases based on the situation of the edge $ya$:
    
\smallskip
\noindent
{\bf Case 1.} \emph{The edge $ya$ is in a $2$-edge-cut of $(G',\sigma')$.}
\smallskip

In this case, we may assume that $\{ya,uv\}:=[V(H), V(G')\setminus V(H)]$ is a $2$-edge-cut in $(G',\sigma')$ where $\{y,u\}\subset H$ and $H$ is $2$-edge-connected. It is easy to see that $[V(H), V(G)\setminus V(H)]$(=$\{yb,uv\}$) is an edge-cut in $(G,\sigma)$ where $\{y,u\}\subset H$ and $H$ is $2$-edge-connected. We first claim that $\{a,b,c\}\cap V(H)=\emptyset$. As $yb$ is in the $2$-edge-cut, $b\notin V(H)$. Assume to the contrary that either: $a\in V(H)$ and $c\notin V(H)$, or $a\notin V(H)$ and $c\in V(H)$, or $\{a,c\}\subset V(H)$. Then $[V(H), V(G)\setminus V(H)]$ contains: $\{yb,ab,ac\}$, $\{yb,ac,bc\}$, or $\{yb,ab,bc\}$ respectively, each contradicting the fact that $[V(H), V(G)\setminus V(H)]$ is a $2$-edge-cut. We then claim that $uv$ is a positive edge, as otherwise $[V(H)\cup \{b\},V(G)\setminus (V(H)\cup\{b\})]:=\{uv,ab,bc\}$ is an unequilibrated cut of $(G, \sigma)$, a contradiction to~\Cref{lem:bad-cut}. We finally claim that $v$ is not adjacent to $b$. Assume not, and since $v\notin V(H)$ and $y\in V(H)$, we have $v\in \{a,c\}$. As $\{a,b,c\}\cap V(H)=\emptyset$, if $v=a$, then $uv=ax$, and $[V(H)\cup\{a,b,c\}, V(G)\setminus (V(H)\cup\{a,b,c\})]=\{cz\}$; if $v=c$, then $uv=cz$, and $[V(H)\cup\{a,b,c\}, V(G)\setminus (V(H)\cup\{a,b,c\})]=\{ax\}$. In both cases $G$ has a cut-edge, a contradiction. So now there exists an all-positive edge-cut $\{yb,uv\}$ such $b$ is incident to a negative edge $bc$, and not adjacent to $v$, a contradiction to~\Cref{lem:2edgecut}.

\smallskip
\noindent
{\bf Case 2.} \emph{The edge $ya$ is not in any $2$-edge-cut of $(G', \sigma')$.}
\smallskip

In this case, we form $(G'',\sigma'')$ from $(G',\sigma')$ by deleting the vertex $a$ and adding an edge $xz$ where $\sigma''(xz)=\sigma'(xa)$, as shown in~\Cref{fig:c3f4}. Since $x$ is not a neighbor of $z$, $(G'',\sigma'')$ has no parallel edges; and as $ya$ is not in any $2$-edge-cut of $(G', \sigma')$, $(G'', \sigma'')$ is $2$-edge-connected. Next we shall prove that $F(G'', \sigma'')=|E^-_{(G'', \sigma'')}|$. Assume not and it implies that $(G'', \sigma'')$ contains an unequilibrated cut $C:=[A,B]$. We consider the following cut $C'$ of $(G, \sigma)$:
\begin{equation*}
	C'=\left\lbrace  
	\begin{array}{ll}
[A,B\cup\{a,b,c\}]&\text{ if }x\in A,\{y,z\}\subset B\\
        
       [A\cup\{b\},B\cup\{a,c\}] &\text{ if }\{x,y\}\subset A,z\in B,\\
        
		[A\cup\{a,c\},B\cup \{b\}] &\text{ if } \{x,z\}\subset A,y\in B,\\

        [A\cup\{a,b,c\},B]&\text{ if } \{x,y,z\}\subset A,
	\end{array}
	\right.
    \end{equation*}
    where $C'=C\setminus \{xz\}\cup \{xa\}$ in the first case, $C'=C\setminus \{xz\} \cup \{xa,ab,bc\}$ in the second case, $C'=C\cup \{ab,bc\}$ in the third case, and $C'=C$ in the last case. In each case, $C'$ is an unequilibrated cut in $(G, \sigma)$, a contradiction to~\Cref{lem:bad-cut}. Thus we have that $$F(G'', \sigma'')=F(G, \sigma)-1>\frac{1}{3}v(G)-1= \frac{1}{3}(v(G'')+3)-1= \frac{1}{3}v(G'').$$ Note that $v(G'')=v(G)-3\geq 7$. If $(G'', \sigma'')$ is not isomorphic to any of $\widehat{\Gamma}_i$'s, then it is a smaller counterexample, contradicting the minimality of $(G, \sigma)$. So we know that $v(G'')=8$ and $(G'', \sigma'')$ can only be one of $\widehat{\Gamma}_3, \widehat{\Gamma}_4$, and $\widehat{\Gamma}_5$. However,  $(G'',\sigma'')$ has a $2$-vertex $y$,  contradicting the fact that $\widehat{\Gamma}_3, \widehat{\Gamma}_4$, and $\widehat{\Gamma}_5$ are all cubic.
\end{proof}

\begin{lemma}\label{lem:K42sub}
    The minimum counterexample $(G,\sigma)$ contain no $\widehat H_1$ as a subgraph, where $\widehat H_1$ is the graph depicted in~\Cref{fig:k4sub}.
\end{lemma}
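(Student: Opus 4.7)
The plan is to follow the reduction pattern of~\Cref{lem:33,lem:34,lem:4435,lem:c3negative}. Suppose, for contradiction, that $(G,\sigma)$ contains $\widehat H_1$ as a subgraph; label its six vertices $a,b,c,d,x,y$ with signs as in~\Cref{fig:k4sub}, so that $ad$ and $bc$ are its two negative edges. Since $G$ is cubic by~\Cref{lem:cubic} and $a,b,c,d$ already have degree three inside $\widehat H_1$, the only edges leaving $V(\widehat H_1)$ are $xx^*$ and $yy^*$ for the unique external neighbors $x^*$, $y^*$ of $x$ and $y$. Thus $\{xx^*,yy^*\}$ is a $2$-edge-cut which, by~\Cref{lem:bad-cut}, contains at most one negative edge, and $x^*\neq y^*$ because $v(G)\ge 10$. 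Up to the symmetry of $\widehat H_1$ exchanging $x$ and $y$, I may assume we are in one of two cases: (A) both $xx^*$ and $yy^*$ are positive, or (B) $xx^*$ is negative and $yy^*$ is positive.

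In each case I construct $(G',\sigma')$ from $(G,\sigma)$ by deleting the six vertices of $\widehat H_1$ and adding a new edge $x^*y^*$ whose sign equals $\sigma(xx^*)$: positive in Case (A), negative in Case (B). The key claim is that $|E^-_{(G',\sigma')}|=F(G',\sigma')$. Indeed, an unequilibrated cut $C$ of $(G',\sigma')$ either avoids $x^*y^*$---in which case, placing $V(\widehat H_1)$ on the same side as $x^*$ and $y^*$ yields exactly the cut $C$ in $(G,\sigma)$, immediately contradicting~\Cref{lem:bad-cut}---or contains $x^*y^*$ with partition $[A,V(G')\setminus A]$, $x^*\in A$ and $y^*\notin A$. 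In the latter case, placing $V(\widehat H_1)$ entirely on the $y^*$-side produces an edge-cut $C'$ of $(G,\sigma)$ obtained from $C$ by replacing $x^*y^*$ with $xx^*$; since $\sigma'(x^*y^*)=\sigma(xx^*)$, the cut $C'$ has the same numbers of positive and negative edges as $C$, is also unequilibrated, and again contradicts~\Cref{lem:bad-cut}. Because $|E^-_{(\widehat H_1,\sigma|_{\widehat H_1})}|=2$ and the sole external negative edge (if any) is balanced against the newly added one, the bookkeeping yields
\[
F(G',\sigma')=F(G,\sigma)-2>\tfrac{1}{3}v(G)-2=\tfrac{1}{3}v(G'),
\]
so $(G',\sigma')$ would violate~\Cref{thm:main} on strictly fewer vertices.

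To contradict the minimality of $(G,\sigma)$ it remains to verify that $(G',\sigma')$ is simple, $2$-edge-connected, and not isomorphic to any of $\widehat\Gamma_1,\dots,\widehat\Gamma_5$. The boundary cases $x^*y^*\in E(G)$ (which would create a parallel edge) and ``$G-V(\widehat H_1)$ has a bridge'' each require a short separate treatment; in the second, the bridge $e$ together with $\{xx^*,yy^*\}$ forms a $3$-edge-cut of $G$ to which~\Cref{lem:2edgecut}, possibly after a suitable switching that makes the cut edges positive, can be applied. The exception $\widehat\Gamma_2$ is ruled out because $v(G')=v(G)-6$ is even by~\Cref{lem:cubic}, and each of $\widehat\Gamma_3,\widehat\Gamma_4,\widehat\Gamma_5$ contains either a negative triangle, a negative triangle adjacent to a $4$-cycle, or two adjacent negative $4$-cycles sharing a negative edge; such a configuration---after discounting at most the single new edge $x^*y^*$---survives in $(G,\sigma)$ and contradicts one of~\Cref{lem:33,lem:34,lem:4435,lem:c3negative}.

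The main technical obstacle I anticipate is the case $(G',\sigma')\cong\widehat\Gamma_1$, which corresponds to the borderline value $v(G)=10$: here $G$ is a ten-vertex cubic graph consisting of $\widehat H_1$ joined by its two external edges to a copy of $K_4$ minus one edge. There are only finitely many such attachments, and each must be ruled out by a direct inspection---either by exhibiting an explicit unequilibrated cut in $(G,\sigma)$ that violates~\Cref{lem:bad-cut}, or by showing that the configuration is switching-equivalent to one of the already excluded $\widehat\Gamma_i$ and hence cannot arise in the chosen minimum counterexample.
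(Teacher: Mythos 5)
Your reduction (delete all six vertices of $\widehat H_1$ and join the two external neighbours by an edge carrying the sign $\sigma(xx^*)$) is a legitimate strategy, and your cut-correspondence argument and the arithmetic $F(G',\sigma')=F(G,\sigma)-2>\frac13 v(G')$ are sound. But the proof as written stops exactly where the real work begins, and the parts you defer are not routine. First, the case $(G',\sigma')\cong\widehat\Gamma_1$ at $v(G)=10$ is a genuine obstruction, not a formality: in that situation $(G',\sigma')$ really does have no unequilibrated cut and $F(G',\sigma')=2$, so your general machinery yields no contradiction, and one must exhibit an unequilibrated cut of $(G,\sigma)$ itself for each of the several ways the deleted edge of $K_4$ can sit relative to the negative perfect matching (for instance, with $x^*u,y^*v$ negative one can check that $[\{x^*,v,x,a,b\}]$ has four negative and three positive edges). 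None of these cuts appear in your write-up. Second, the sub-case $x^*y^*\in E(G)$ destroys simplicity, and your suggestion to invoke~\Cref{lem:2edgecut} does not apply as stated (that lemma requires the two cut edges to be positive and the endpoints non-adjacent). Third, your dismissal of $\widehat\Gamma_3,\widehat\Gamma_4,\widehat\Gamma_5$ via ``the forbidden configuration survives after discounting the new edge'' needs to be checked graph by graph against the actual position of $x^*y^*$ and the signatures produced by switching; the paper carries out exactly this kind of verification elsewhere (end of~\Cref{lem:2edgecut}) and it is not a one-line matter.

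For comparison, the paper avoids all three difficulties by a different surgery: it does not delete the whole gadget but \emph{replaces} $\widehat H_1$ (\Cref{fig:k4sub}) by the three-vertex negative triangle of~\Cref{fig:c3-2}, keeping $x_i,y_i$ and their external edges intact. This removes only $3$ vertices and exactly one negative edge, so no parallel edge or connectivity issue can arise, the inequality $F(G',\sigma')=F(G,\sigma)-1>\frac13 v(G')$ still holds, and since $v(G')\geq 7$ the only candidate exceptions are the cubic graphs $\widehat\Gamma_3,\widehat\Gamma_4,\widehat\Gamma_5$ --- all instantly excluded because $(G',\sigma')$ retains a $2$-vertex. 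If you want to keep your deletion approach you must supply the missing case analyses above; otherwise the replacement trick is the cleaner route.
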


\begin{proof}
    Assume for the contradiction that $(G,\sigma)$ contains $\widehat H_1$ as a subgraph. We form $(G',\sigma')$ from $(G, \sigma)$ by replacing $\widehat H_1$ with the subgraph in~\Cref{fig:c3-2}. it is easy to see that $(G', \sigma')$ is $2$-edge-connected and simple. Moreover, we claim that $F(G', \sigma')=F(G, \sigma)-1$. Assume not and it implies that $(G', \sigma')$ contains an unequilibrated cut $C:=[A,B]$. We construct a new cut $C'$ (of $(G, \sigma)$) as follows:

 \begin{equation*}
	C'=\left\lbrace  
	\begin{array}{ll}
		[A,B\cup\{a_i,b_i,c_i,d_i\}] &\text{ if }x_i\in A,\{a_i,y_i\}\subset B,\\
        
        [A\cup\{c_i\},B\cup\{b_i,d_i\}] &\text{ if }\{x_i,a_i\}\subset A,y_i\in B,\\
        
		[A\cup\{a_i,b_i,c_i,d_i\},B\setminus \{a_i\}] &\text{ if }\{x_i,y_i\}\subset A, a_i\in B, \\

        [A\cup\{b_i,c_i,d_i\},B] &\text{ if }\{x_i,a_i,y_i\}\subset A,\\
	\end{array}
	\right.
    \end{equation*}
   where $C'=C\setminus \{x_iy_i\}\cup\{x_ic_i\}$ in the first case, $C'=C\setminus \{a_iy_i,x_iy_i\}\cup\{a_ib_i,a_id_i,c_ib_i,c_id_i\}$ in the second case, $C'=C\setminus \{x_ia_i,a_iy_i\}$ in the third case, and $C'=C$ in the last case. It is easy to see that $C'$ is an unequilibrated cut of $(G, \sigma)$, a contradiction to~\Cref{lem:bad-cut}. Hence, we have that $$F(G',\sigma')=F(G, \sigma)-1>\frac{1}{3}v(G)-1=\frac{1}{3}(v(G')+3)-1=\frac{1}{3}v(G').$$
Note that $v(G')=v(G)-3\geq 7$. If $(G', \sigma')$ is not isomorphic to any of $\widehat{\Gamma}_i$'s, then it is a smaller counterexample, contradicting the minimality of $(G, \sigma)$. So we know that $v(G')=8$ and 
$(G', \sigma')$ can only be one of $\widehat{\Gamma}_3, \widehat{\Gamma}_4$, and $\widehat{\Gamma}_5$. It is impossible since   $(G',\sigma')$ has a $2$-vertex $a_i$.
\end{proof}

\begin{lemma}\label{lem:H4}
The minimum counterexample $(G, \sigma)$ contains no $\widehat H_2$ as a signed subgraph, where $\widehat H_2$ is the graph depicted in~\Cref{fig:widehat H2} and $x\not\in \{b',c'\}$.    
\end{lemma}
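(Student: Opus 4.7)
The plan is to follow the reduction template established in \Cref{lem:33}, \Cref{lem:34}, \Cref{lem:4435}, \Cref{lem:c3negative}, and \Cref{lem:K42sub}. Assuming for contradiction that $(G,\sigma)$ contains $\widehat H_2$ as a signed subgraph, I identify the boundary vertices of $\widehat H_2$ in $(G,\sigma)$ through which it attaches to the rest of $G$, among which $b'$, $c'$, and $x$ are singled out. I then delete the internal vertices of $\widehat H_2$ and splice in a small replacement gadget among the boundary vertices, choosing the signs of the new edges so that the parity of every boundary-to-boundary path through the gadget agrees with the parity through $\widehat H_2$. The hypothesis $x\notin \{b',c'\}$ is exactly the condition that prevents the replacement from creating a parallel edge, so the resulting signed graph $(G',\sigma')$ remains a simple $2$-edge-connected subcubic graph.

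The second step is to show that $F(G',\sigma')=|E^-_{(G',\sigma')}|$, i.e.\ $(G',\sigma')$ has no unequilibrated cut. Given any candidate unequilibrated cut $C'$ in $(G',\sigma')$, I case-split on how the boundary vertices $b',c',x$ are distributed between the two sides of $C'$, and in each case lift $C'$ to a cut $C$ of $(G,\sigma)$ by replacing the gadget-edges crossing $C'$ with the $\widehat H_2$-edges joining the corresponding boundary vertices (as in the cut-replacement table used in \Cref{lem:34} and \Cref{lem:4435}). Because the sign pattern has been matched parity-wise, each such $C$ is unequilibrated in $(G,\sigma)$, contradicting \Cref{lem:bad-cut}. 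This yields $F(G',\sigma')\ge F(G,\sigma)-k$, where $k$ is the excess of negative edges deleted from $\widehat H_2$ over those introduced in the gadget, and a direct computation $F(G',\sigma')\ge F(G,\sigma)-k>\tfrac13 v(G)-k=\tfrac13 v(G')+(\tfrac13(v(G')-v(G))+(-k))$ shows $F(G',\sigma')>\tfrac13 v(G')$ for the right choice of gadget size.

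Finally, to invoke the minimality of $(G,\sigma)$ one must rule out that $(G',\sigma')$ is isomorphic to any of $\widehat\Gamma_1,\ldots,\widehat\Gamma_5$. The reduction always leaves at least one vertex of the boundary (typically $x$, $b'$, or $c'$) with degree less than $3$ in $(G',\sigma')$, which excludes the cubic exceptions $\widehat\Gamma_1,\widehat\Gamma_3,\widehat\Gamma_4,\widehat\Gamma_5$; the remaining case $(G',\sigma')\cong\widehat\Gamma_2$ is ruled out by a direct vertex-count against $v(G)\ge 10$ together with the assumption $F(G,\sigma)>\tfrac13 v(G)$. The principal technical obstacle is the design of the replacement gadget: the new signs must simultaneously preserve every boundary parity of $\widehat H_2$, avoid creating adjacent negative edges forbidden by \Cref{lem:bad-cut}, and not produce parallel edges—here the hypothesis $x\notin\{b',c'\}$ is decisive. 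As in \Cref{lem:4435}, it may be necessary to introduce two alternative reductions $(G_1,\sigma_1)$ and $(G_2,\sigma_2)$ and argue that at least one of them is simple and $2$-edge-connected based on the coincidence pattern among the boundary vertices of $\widehat H_2$.
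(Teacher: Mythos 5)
Your proposal correctly identifies the reduction template, but it stops at the template, and the places where it does commit to specifics are the places where it goes wrong. First, the guiding principle you state for choosing the gadget signs --- that ``the parity of every boundary-to-boundary path through the gadget agrees with the parity through $\widehat H_2$'' --- is not well-defined here: $\widehat H_2$ contains negative cycles (e.g.\ $uybz$ and $uycz$), so between $u$ and $b'$ (and between $u$ and $c'$) both parities are realized by paths inside the deleted part. The reduction must therefore be chosen and verified explicitly: the paper keeps $u$, deletes $\{y,z,b,c\}$, and adds a \emph{negative} edge $ub'$ and a \emph{positive} edge $uc'$; the identity $F(G',\sigma')=F(G,\sigma)-1$ is then established by an explicit four-case cut-lifting table, not by a parity-matching argument. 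Second, you attribute the simplicity of $(G',\sigma')$ entirely to the hypothesis $x\notin\{b',c'\}$, but that hypothesis does not rule out $b'=c'$, which would make the two new edges at $u$ parallel; the paper first shows $b'\neq c'$ by observing that otherwise $(G,\sigma)$ would contain $\widehat H_1$, contradicting \Cref{lem:K42sub}. This step is absent from your proposal.

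Third, and most substantively, your treatment of the exceptional graphs fails for this configuration. In the reduction just described every surviving vertex keeps its degree ($u$ ends up adjacent to $x$, $b'$, $c'$, while $b'$ and $c'$ each trade one neighbor for $u$), so $(G',\sigma')$ has no $2$-vertex, and the cubic exceptions $\widehat\Gamma_3,\widehat\Gamma_4,\widehat\Gamma_5$ cannot be excluded by the degree argument you propose. The paper instead observes that if $(G',\sigma')$ were one of them, then $v(G')=8$, hence $v(G)=12$ and $F(G,\sigma)=F(G',\sigma')+1=4=\tfrac13 v(G)$, contradicting $F(G,\sigma)>\tfrac13 v(G)$. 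Without this computation (which depends on knowing exactly how many vertices and negative edges the specific gadget removes), the minimality contradiction does not close.
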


\begin{figure}[htbp]
\begin{subfigure}[t]{.48\textwidth}
		\centering
	     \begin{tikzpicture}[>=latex,
		roundnode/.style={circle, draw=black!90, thick, minimum size=2mm, inner sep=0pt},
        squarenode/.style={rectangle, draw=black!90, thick, minimum size=2mm, inner sep=0pt},
        scale=0.8
		]
          \node[roundnode] (x) at (-1,0) {};
          \node[roundnode] (u) at (0,0) {};
          \node[roundnode] (y) at (1,0.8) {};
          \node[roundnode] (z) at (1,-0.8) {};
          \node[roundnode] (b) at (2,0.8) {};
          \node[roundnode] (b') at (3,0.8) {};
          \node[roundnode] (c) at (2,-0.8) {};
          \node[roundnode] (c') at (3,-0.8) {};
        
          \draw[fill=white,line width=0.2pt] (-1,0) node[below=1mm] {$x$};
          \draw[fill=white,line width=0.2pt] (0,0) node[below=1mm] {$u$};
          \draw[fill=white,line width=0.2pt] (1,0.8) node[above=1mm] {$y$};
          \draw[fill=white,line width=0.2pt] (1,-0.8) node[below=1mm] {$z$};
          \draw[fill=white,line width=0.2pt] (2,0.8) node[above=1mm] {$b$};
          \draw[fill=white,line width=0.2pt] (3,0.8) node[above=1mm] {$b'$};
          \draw[fill=white,line width=0.2pt] (2,-0.8) node[below=1mm] {$c$};
          \draw[fill=white,line width=0.2pt] (3,-0.8) node[below=1mm] {$c'$};

          \draw[line width=1pt, blue] (b')--(b)--(z)--(u)--(y)--(c)--(c');
          \draw[line width=1pt, blue] (u)--(x);
          \draw[densely dotted, line width=1pt, red] (b)--(y);
          \draw[densely dotted, line width=1pt, red] (c)--(z);
	    \end{tikzpicture} 
        \caption{A signed subgraph $\widehat H_2$}
		\label{fig:widehat H2}
  \end{subfigure}
  \begin{subfigure}[t]{.48\textwidth}
  \centering
		\begin{tikzpicture}[>=latex,
		roundnode/.style={circle, draw=black!90, thick, minimum size=2mm, inner sep=0pt},
        squarenode/.style={rectangle, draw=black!90, thick, minimum size=2mm, inner sep=0pt},
        scale=0.8
		]
          \node[roundnode] (x) at (-1,0) {};
          \node[roundnode] (u) at (0,0) {};
          \node[roundnode] (b') at (3,0.8) {};
          \node[roundnode] (c') at (3,-0.8) {};
        
          \draw[fill=white,line width=0.2pt] (-1,0) node[below=1mm] {$x$};
          \draw[fill=white,line width=0.2pt] (0,0) node[below=1mm] {$u$};
          \draw[fill=white,line width=0.2pt] (3,0.8) node[above=1mm] {$b'$};
          \draw[fill=white,line width=0.2pt] (3,-0.8) node[below=1mm] {$c'$};
          
          \draw[line width=1pt, blue] (c')--(u)--(x);
          \draw[densely dotted, line width=1pt, red] (u)--(b');
	    \end{tikzpicture} 
        \caption{$(G',\sigma')$}
		\label{fig:H4'}
    \end{subfigure}
    \caption{Configurations in~\Cref{lem:H4}} 
    \label{fig:H4-subgraph}
\end{figure}  

\begin{proof}
Assume to the contrary that $(G, \sigma)$ contains $\widehat H_2$ as a subgraph, labeled as in~\Cref{fig:widehat H2}. Note that $b'\ne c'$, as otherwise $(G, \sigma)$ contains $\widehat H_1$ as a subgraph, contradicting~\Cref{lem:K42sub}. We form a signed graph $(G', \sigma')$ from $(G, \sigma)$ by deleting vertices $\{y,z,b,c\}$, and adding a negative edge $ub'$ and a positive edge $uc'$, as shown in~\Cref{fig:H4'}. It is easy to see that $(G', \sigma')$ is $2$-edge-connected. Since $x\not\in \{b', c'\}$, $(G', \sigma')$ contains no parallel edges. Moreover, we claim that $F(G', \sigma')=F(G, \sigma)-1$. Assume not and it implies that $(G', \sigma')$ contains an unequilibrated cut $C:=[A,B]$. We consider the following cut $C'$ of $(G, \sigma)$:
 \begin{equation*}
	C'=\left\lbrace  
	\begin{array}{ll}
		[A\cup\{y,z\},B\cup\{b,c\}] &\text{ if }u\in A,\{b',c'\}\subset B,\\
        
        [A\cup\{y,b,z,c\},B] &\text{ if }\{u,b'\}\subset A,c'\in B,\\
        
		[A\cup\{y,c\},B\cup\{b,z\}] &\text{ if } \{u,c'\}\subset A,b'\in B, \\

        [A\cup\{y,b,z,c\},B] &\text{ if }\{u,b',c'\}\subset A,\\
	\end{array}
	\right.
    \end{equation*}
    where $C'=C\setminus \{b'u,c'u\}\cup\{by,bz,cy,cz\}$ in the first case, $C'=C\setminus \{c'u\}\cup\{c'c\}$ in the second case, $C'=C\setminus \{b'u\}\cup\{by,uz,cz\}$ in the third case, and $C'=C$ in the last case. The edge-cut $C'$ is an unequilibrated cut of $(G, \sigma)$, a contradiction to~\Cref{lem:bad-cut}. Hence, we have that $$F(G',\sigma')=F(G, \sigma)-1>\frac{1}{3}v(G)-1=\frac{1}{3}(v(G')+4)-1> \frac{1}{3}v(G').$$ Note that $v(G')=v(G)-4\geq 6$. If $(G', \sigma')$ is not isomorphic to any of $\widehat{\Gamma}_i$'s, then it is a smaller counterexample, contradicting the minimality of $(G, \sigma)$. So we know that $v(G')=8$ and $(G', \sigma')$ can only be one of $\widehat{\Gamma}_3, \widehat{\Gamma}_4$, and $\widehat{\Gamma}_5$. Thus $(G,\sigma)$ is a signed subcubic graph on $12$ vertices with $3$ negative edges, i.e., $F(G, \sigma)=\frac{1}{3}v(G)$, a contradiction.
\end{proof}

\subsection[ProofOfLemma]{Proof of~\Cref*{lem:key}}\label{sec:proof of lemma}

Recall that for $i\in \{0,1,2,3\}$, $X_i=\{u\in X\mid d_Y(u)=i\}$, and for $j\in \{0,1,2\}$,
$Y_j=\{v\in Y\mid d_X(v)=j\}$. We aim to show that $$|X_3|+|Y_0|\leq |Y_2|.$$

It follows from \Cref{lem:cubic} that the minimum counterexample $(G, \sigma)$ is cubic, and from \Cref{lem:c3negative} that it contains no negative triangle.

\begin{lemma}\label{lem:Y0}
$Y_0=\emptyset$.
\end{lemma}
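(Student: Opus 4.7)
The plan is to argue by contradiction, examining the cut $[v,b,c]$ where $b,c$ will be the two positive neighbors of a hypothetical $v\in Y_0$. Suppose $v\in Y_0$. Since $v\in Y$, $v$ carries at least one negative edge, and by \Cref{lem:bad-cut} at most one; call the negative neighbor $a$ and the two positive neighbors $b,c$, each of which lies in $Y$ by the definition of $Y_0$. The same cut bound applied at $[a],[b],[c]$ then pins down the local signature: the unique negative edge at $a$ is forced to be $va$, while $aa_1$ and $aa_2$ are positive; and the unique negative edges at $b$ and $c$ are of the form $bb_1$ and $cc_1$ for some $b_1,c_1\neq v$, with the remaining edges $bb_2$ and $cc_2$ positive.

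I would then split on the status of the pair $bc$. If $bc$ is a negative edge, uniqueness of the negative edge at each of $b$ and $c$ forces $b_1=c$ and $c_1=b$, so the triangle $vbc$ has exactly one negative edge (namely $bc$) and is a negative triangle, contradicting \Cref{lem:c3negative}. If $bc$ is a positive edge, then $b_2=c$ and $c_2=b$, so $bb_2$ and $cc_2$ lie inside $\{v,b,c\}$ and the cut $[v,b,c]$ consists of exactly the three edges $va,bb_1,cc_1$, all negative; with $3>\lfloor 3/2\rfloor$ this violates \Cref{lem:bad-cut}. Finally, if $bc\notin E(G)$, a brief check shows $b_1,b_2,c_1,c_2\notin\{v,b,c\}$, and so the cut $[v,b,c]$ consists of the five distinct edges $va,bb_1,bb_2,cc_1,cc_2$, of which exactly three ($va,bb_1,cc_1$) are negative, again violating \Cref{lem:bad-cut} since $3>\lfloor 5/2\rfloor$. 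In every branch we reach a contradiction, so $Y_0=\emptyset$.

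I expect no serious obstacle: the entire argument is purely local and relies only on \Cref{lem:bad-cut} together with \Cref{lem:c3negative}, both already in hand. The only care needed is the small piece of bookkeeping showing that, in each of the three branches, the edges I list as belonging to $[v,b,c]$ are genuinely distinct and that no coincidence among $a,b_1,b_2,c_1,c_2$ can create a hidden fourth case — a possibility which is immediately ruled out by the uniqueness of the negative edge at each vertex of $Y$ (for instance, $a=b_1$ would force $ab$ to be a second negative edge at $a$, and $b_1=c$ forces $c_1=b$, landing us in the negative-triangle branch).
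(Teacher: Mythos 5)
Your proof is correct and follows essentially the same route as the paper's: assume a vertex of $Y_0$ exists, split on whether its two positive neighbours are joined by a negative edge, and invoke \Cref{lem:c3negative} in the negative-edge case. The only difference is presentational — where you verify directly that the cut $[v,b,c]$ is unequilibrated (violating \Cref{lem:bad-cut}), the paper cites its prepackaged $\mathcal{T}$-free/$\mathcal{C}$-free condition (\Cref{lem:tree}), which encodes exactly that computation.
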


\begin{proof}
Assume to the contrary that there is a vertex $u\in Y_0$. Let $x$ and $y$ be two positive neighbors of $u$. Since $d_X(u)=0$, both $x$ and $y$ are in $Y$. If $xy$ is not a negative edge, then $G[\{u,x,y\}]\in \mathcal{T}$ is an induced subgraph of $G$, a contradiction. So $xy$ must be a negative edge. It implies that $u$ is in a negative triangle, a contradiction to~\Cref{lem:c3negative}.
\end{proof}

\begin{lemma}\label{lem:Vertex-A1}
Every vertex of $X_3$ has at least two neighbors that are vertices in $Y_2$.
\end{lemma}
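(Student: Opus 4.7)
The plan is to argue by contradiction. Suppose some $u \in X_3$ has at most one neighbor in $Y_2$; since $Y_0 = \emptyset$ by \Cref{lem:Y0}, at least two of the three neighbors of $u$, say $y_1$ and $y_2$, lie in $Y_1$. For $i \in \{1,2\}$, because $y_i \in Y_1$ has $u$ as its only $X$-neighbor and, by \Cref{lem:bad-cut}, exactly one incident negative edge (which cannot be $u y_i$), I label its two non-$u$ neighbors as $z_i \in Y$ (via the negative edge) and $w_i \in Y$ (via the positive edge).

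First I would perform a round of small exclusions using \Cref{lem:tree} and \Cref{lem:c3negative}. One checks that $y_1 y_2 \notin E(G)$, for otherwise $u y_1 y_2$ would be either a negative triangle (ruled out by \Cref{lem:c3negative}) or an all-positive triangle whose cut carries the negative edges $y_1 z_1, y_2 z_2$ and thus belongs to $\mathcal{C}$. The same triangle argument forces $y_3 \notin \{z_1, z_2, w_1, w_2\}$; also $z_1 \ne z_2$ by \Cref{lem:bad-cut}, and $w_1 \ne w_2$ since a common vertex $w$ would have all three neighbors in $Y$, placing $w \in Y_0 = \emptyset$.

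The heart of the argument is the $5$-vertex path $P = w_1 y_1 u y_2 w_2$, whose four internal edges are all positive. Whenever $P$ is induced, its cut has seven edges and contains the negative edges $y_1 z_1$, $y_2 z_2$, together with the unique negative edge incident to each of $w_1, w_2$ (which lies outside $P$, since the internal edges $w_i y_i$ are positive); these four negative cut edges put $P$ in $\mathcal{T}$ with $k = 2$, contradicting \Cref{lem:tree}. After the exclusions above, the only remaining obstructions to inducedness of $P$ are: a coincidence $w_1 = z_2$ and/or $z_1 = w_2$, or an extra edge $w_1 w_2 \in E(G)$.

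When both coincidences $w_1 = z_2$ and $z_1 = w_2$ occur, the local configuration matches exactly the forbidden signed subgraph $\widehat{H}_2$ of \Cref{lem:H4} (with $y_3$ in the role of $x$, whose exclusion from $\{b', c'\}$ follows from $y_3 \notin \{z_1, w_1\}$). If only one coincidence holds, say $w_1 = z_2 =: c$, then $c$ has two $Y$-neighbors ($y_1, y_2$), so $c \in Y_1$ by \Cref{lem:Y0} and its third neighbor $v$ must lie in $X$; one then substitutes an alternative induced $5$-vertex tree routed through $c$ and $v$, or exhibits a small odd cycle through $v$ and $y_3$, that carries enough negative cut-edges to again contradict \Cref{lem:tree}. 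For the edge obstruction, the $5$-cycle $C = u y_1 w_1 w_2 y_2$ is induced: if $w_1 w_2$ is positive, then the unique negative edges at $w_1, w_2$ lie on their third edges and the cut of $C$ carries four negative edges, placing $C \in \mathcal{C}$; if $w_1 w_2$ is negative, it absorbs both $w_i$'s negative edges internally, forces the third neighbors of $w_1, w_2$ to be $X$-vertices joined by positive edges, and then a combination of \Cref{lem:33}, \Cref{lem:34}, and \Cref{lem:4435} with an alternative induced $5$-tree yields the contradiction. The hard part will be the bookkeeping in the single-coincidence subcase: depending on possible further identifications involving $y_3$ or the third neighbor $v$, one must tailor the witnessing induced tree or odd cycle in each subcase by hand.
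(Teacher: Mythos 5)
Your high-level strategy---exhibit an induced member of $\mathcal{T}$ or $\mathcal{C}$, or fall back on one of the forbidden configurations---is the paper's, and replacing the paper's spider $\{x,x',u,y,z\}$ by the path $w_1y_1uy_2w_2$ is a legitimate variant; the fully induced case and the positive-$w_1w_2$ case are handled correctly. However, two of your obstruction cases contain genuine gaps. In the double-coincidence case, your verification of the hypothesis of \Cref{lem:H4} is a non-sequitur: the lemma requires $x\notin\{b',c'\}$, i.e.\ that $y_3$ is not \emph{adjacent} to $z_1$ or to $z_2$, whereas $y_3\notin\{z_1,w_1\}$ only says $y_3$ is not \emph{equal} to them. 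Nothing you have established forbids $y_3z_1\in E(G)$, and that sub-case needs a separate argument (the paper disposes of it with the $3$-vertex path on $\{y_1,z_1,y_3\}$ --- in its notation $\{x,b,z\}$ --- whose cut carries three negative edges and hence lies in $\mathcal{T}$).

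The more serious gap is the single-coincidence case, which you do not actually prove, and the devices you gesture at point the wrong way. Any $5$-vertex tree ``routed through $c$ and $v$'' cannot work: $v\in X$ contributes no negative edge to the cut, and if $y_2$ is in the tree then $c$'s negative edge $cy_2$ is internal, so such a tree carries at most three negative cut-edges where $\mathcal{T}$ demands four; nor is there an evident short odd cycle through $v$ and $y_3$. What actually closes this case is a second tree argument: the spider $\{y_2,w_2,u,y_1,y_3\}$ lies in $\mathcal{T}$ unless $w_2=z_3$ (the alternative that $w_2$ is a positive neighbor of $y_3$ forces $w_2\in Y_0=\emptyset$, and $w_2\in\{z_1,c\}$ is excluded), and once $w_2=z_3$ one has the $6$-cycle $uy_1cy_2w_2y_3$ with unique chord $uy_2$, switches at the equilibrated cut $[y_2,w_2]$, and applies \Cref{lem:4435}\ref{lem:44} to the resulting pair of adjacent negative $4$-cycles sharing the now-negative edge $uy_2$. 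This is precisely the paper's Case~1, and nothing in your sketch substitutes for it. (By contrast, in your negative-$w_1w_2$ sub-case the spider at $y_1$, namely $\{y_1,w_1,u,y_2,y_3\}$, is already an induced member of $\mathcal{T}$ because $w_1$'s negative neighbor $w_2$ lies outside it, so that case is easily fillable even though your stated route through \Cref{lem:33}, \Cref{lem:34} and \Cref{lem:4435} is not worked out.)
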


  \begin{figure}[htbp]
  \centering
     \begin{tikzpicture}[>=latex,
		roundnode/.style={circle, draw=black!90, thick, minimum size=2mm, inner sep=0pt},
        scale=0.8]
          \node[roundnode] (u1) at (3.5,0.8) {};
          \node[roundnode] (x1) at (2,0) {};
          \node[roundnode] (x11) at (1,0) {};
          \node[roundnode] (y1) at (3.5,0) {};
          \node[roundnode] (z1) at (5,0) {};
          \node[roundnode] (a1) at (2,-0.8) {};
          \node[roundnode] (b1) at (3.5,-0.8) {};
          \node[roundnode] (c1) at (5,-0.8) {};

           \draw[fill=white,line width=0.2pt] (3.5,0.8) node[above=1mm] {$u$};
           \draw[fill=white,line width=0.2pt] (2,0) node[right=1mm] {$x$};
           \draw[fill=white,line width=0.2pt] (1,0) node[above=1mm] {$x'$};
           \draw[fill=white,line width=0.2pt] (5,0) node[right=1mm] {$z$};
           \draw[fill=white,line width=0.2pt] (3.5,0) node[right=1mm] {$y$};
           \draw[fill=white,line width=0.2pt] (2,-0.8) node[right=1mm] {$a$};
           \draw[fill=white,line width=0.2pt] (3.5,-0.8) node[right=1mm] {$b$};
           \draw[fill=white,line width=0.2pt] (5,-0.8) node[right=1mm] {$c$};

           \draw[line width=1pt, blue] (x11)--(x1)--(u1)--(z1);
          \draw[line width=1pt, blue] (u1)--(y1);
          \draw[densely dotted, line width=1pt, red] (x1)--(a1);
          \draw[densely dotted, line width=1pt, red] (y1)--(b1);
          \draw[densely dotted, line width=1pt, red] (z1)--(c1);
		\end{tikzpicture}
     \caption{A vertex $u\in X_3$ has three neighbors $x,y,z\in Y$}
    \label{fig:Vertex-A1}
\end{figure}
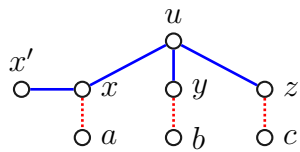

\begin{proof}
Let $u$ be an vertex of $X_3$ in $(G, \sigma)$ with its three neighbors $x,y,$ and $z$. Since $u\in X_3$, $\{x,y,z\}\subset Y$. We first claim that $\{x,y,z\}$ is an independent set, as shown in~\Cref{fig:Vertex-A1}. If not, then without loss of generality we may assume that $x$ is adjacent to $y$. By~\Cref{lem:c3negative}, $xy$ must be a positive edge (i.e., $uxy$ is a positive triangle). Now $G[\{u,x,y\}]\in \mathcal{C}$ is an induced subgraph of $G$, a contradiction.

Assume to the contrary that at least two of $x,y,$ and $z$ are not in $Y_2$. By~\Cref{lem:Y0}, we know that $Y_0=\emptyset$. So without loss of generality, we may assume $\{x, y\}\subset Y_1$, i.e., $d_X(x)=1$ and $d_X(y)=1$.  Let $a,b,c$ be the other neighbor of $x,y,z$ where $ax, by, cz$ are all negative edges in $(G, \sigma)$. By definition, each of $a$, $b$, and $c$ is in $Y$. Let $x'$ be the third neighbor of $x$. As $d_X(x)=1$ and $u\in X\cap N(x)$, we have that $x'\in Y$, i.e., $x'$ is incident with a negative edge. Thus $G[\{x,x',u,y,z\}]\in \mathcal{T}$ is an induced subgraph, unless $x'$ coincides with exactly one of $b$ and $c$. Without loss of generality, we assume that $x'=b$, i.e., $xb$ is a positive edge. Similar arguments apply to $y$, and hence $y$ is connected to exactly one of $a$ and $c$ via a positive edge. We consider the following two cases:

\begin{itemize}
    \item Assume that $yc\in E(G)$. In this case, $b$ is not the neighbor of $z$ or $c$, as otherwise either $G[\{x,b,z\}]\in \mathcal{T}$ or $G[\{x,b,c\}]\in \mathcal{T}$ is an induced subgraph of $(G, \sigma)$, a contradiction to~\Cref{lem:tree}. Moreover, since $x$ is neither a neighbor of $c$ nor $z$, the $6$-cycle $xbyczu$ has only one chord $yu$. By switching at an equilibrated cut $[y,c]$, the resulting signed graph $(G,\sigma')$ satisfies that $|E^-_{(G,\sigma')}|=F(G,\sigma)$. Furthermore, $(G,\sigma')$ contains two adjacent negative $4$-cycles sharing a common negative edge $yu$ (where the $6$-cycle is exactly $xbyczu$), a contradiction to~\Cref{lem:4435}\ref{lem:44}.

    \item Assume that $ya\in E(G)$. In this case, if $z$ is adjacent to exactly one of $a$ and $b$ via a positive edge, by symmetry, say $zb\in E(G)$, then $G[\{x, b, z\}]\in \mathcal{T}$ is an induced subgraph of $G$, a contradiction. Thus it contains $\widehat H_2$ as a subgraph, which contradicts~\Cref{lem:H4}. 
\end{itemize}
We complete the proof of the lemma.
\end{proof}

Now consider the bipartite subgraph of $G$ between $X_3$ and $Y_2$. By double counting on the number of edges between $X_3$ and $Y_2$, we have that $$\sum_{v\in X_3} d_{Y_2}(v)=\sum_{u\in Y_2} d_{X_3}(u).$$ For $v\in X_3$, by~\Cref{lem:Vertex-A1}, we have that $d_{Y_2}(v)\ge 2$; and for $u\in Y_2$, by the definition of $Y_2$, we have that $d_{X_3}(u)\leq d_X(u)= 2$. Therefore, since $Y_0=\emptyset$ by~\Cref{lem:Y0}, $$\left | X_3 \right |+\left | Y_0 \right |=\left | X_3 \right |\le \frac{1}{2}  \sum_{v\in X_3} d_{Y_2}(v)=\frac{1}{2} \sum_{u\in Y_2} d_{X_3}(u)\leq |Y_2|.$$ This completes the proof of \Cref{lem:key}.

\section*{Acknowledgments}
Jiaao Li is partially supported by National Key Research and Development Program of China (No. 2022YFA1006400), National Natural Science Foundation of China (Nos. 12571371, 12222108), Natural Science Foundation of Tianjin (No. 24JCJQJC00130), and the Fundamental Research Funds for the Central Universities, Nankai University. Zhouningxin Wang is partially supported by National Natural Science Foundation of China (No. 12301444) and the Fundamental Research Funds for the Central Universities, Nankai University.

\bibliographystyle{amsplain}

\end{document}